\def\setliststart#1{\setcounter{\@listctr}{#1}%
  \addtocounter{\@listctr}{-1}}
 \newtheorem{The}{Theorem}[section]
 \newtheorem{Cor}[The]{Corollary}
 \newtheorem{Lem}[The]{Lemma}
 \newtheorem{Pro}[The]{Proposition}
 \theoremstyle{definition}
 \numberwithin{equation}{section}
\newcommand{\R}{\mathbb{R}}
\newcommand{\N}{\mathbb{N}}
\newcommand{\SING}{\mbox{\rm Sing}\,}
\newcommand{\CUT}{\mbox{\rm Cut}\,(u)}
\title[Topological and control theoretic properties of Hamilton-Jacobi equations]{Topological and control theoretic properties of Hamilton-Jacobi equations via Lax-Oleinik commutators}
\author{Piermarco Cannarsa, Wei Cheng and Jiahui Hong}
\address{Dipartimento di Matematica, Universit\`a di Roma ``Tor Vergata'', Via della Ricerca Scientifica 1, 00133 Roma, Italy}
\email{cannarsa@mat.uniroma2.it}
\address{Department of Mathematics, Nanjing University, Nanjing 210093, China}
\email{chengwei@nju.edu.cn}
\address{School of Mathematical Sciences, Shanghai Jiao Tong University, Shanghai 200240, China}
\email{hjh9413@sjtu.edu.cn}
\date{\today}
\subjclass[2010]{35F21, 49L25, 37J50}
\keywords{Hamilton-Jacobi equation, cut locus, weak KAM theory, controllability}
\begin{document}
\maketitle

\begin{abstract}
In the context of weak KAM theory, we discuss the commutators $\{T^-_t\circ T^+_t\}_{t\geqslant0}$ and $\{T^+_t\circ T^-_t\}_{t\geqslant0}$ of Lax-Oleinik operators. We characterize the relation $T^-_t\circ T^+_t=Id$ for both small time and arbitrary time $t$. We show this relation characterizes controllability for  evolutionary Hamilton-Jacobi equation. Based on our previous work on the cut locus of viscosity solution, we refine our analysis of the cut time function $\tau$ in terms of commutators $T^+_t\circ T^-_t-T^+_t\circ T^-_t$ and clarify the structure of the super/sub-level set of the cut time function $\tau$. 
\end{abstract}

\section{Introduction}

Suppose $M$ is a smooth connected and compact manifold without boundary with $TM$ and $T^*M$  the tangent and cotangent bundle of $M$ respectively. Let $H$ be a Hamiltonian on $M$. The study of the forward Hamilton-Jacobi equation 
\begin{equation}\label{eq:HJs-_intro}
	\begin{cases}
		D_tu(t,x)+H(x,D_xu(t,x))=0,\qquad t\in[0,T], x\in M,\\
		u(0,x)=u_0(x),
	\end{cases}
\end{equation}
and the backward Hamilton-Jacobi equation
\begin{equation}\label{eq:HJs+_intro}
	\begin{cases}
		-D_tu(t,x)-H(x,D_xu(t,x))=0,\qquad t\in[0,T], x\in M,\\
		u(T,x)=u_0(x),
	\end{cases}
\end{equation}
plays an important r\^ole in many fields such as the calculus of variation and optimal control (\cite{Bardi_Capuzzo-Dolcetta1997,Cannarsa_Sinestrari_book}), optimal transport (\cite{Villani_book2009,Ambrosio_Brue_Semola_book2021}), Hamiltonian dynamical systems and PDE (\cite{Lions_book,Fathi_book}). In principle, the solution $u(t,x)$ has a representation formula by Lax-Oleinik  evolution, under very general regularity assumptions on $H$ and $u_0$. More precisely, for any $\phi:M\to\R$, we define the abstract Lax-Oleinik operators as
\begin{align*}
	\begin{split}
		T^+_t\phi(x)=&\,\sup_{y\in M}\{\phi(y)-c_t(x,y)\},\\
		T^-_t\phi(x)=&\,\inf_{y\in M}\{\phi(y)+c_t(y,x)\},
	\end{split}
	\qquad t>0, y\in M,
\end{align*}
where $c_t(x,y):[0,+\infty)\times M\times M$ is the \emph{(dynamical) cost function}. If $c_t(x,y)=A_t(x,y)$ is the action of the Lagrangian $L$ associated with $H$, then the solution of \eqref{eq:HJs-_intro} has the representation $u^-(t,x)=T^-_tu_0(x)$ and the solution of \eqref{eq:HJs+_intro} has the representation $u^+(t,x)=T^+_{T-t}u_0(x)$ and the uniqueness issues of \eqref{eq:HJs-_intro} and \eqref{eq:HJs+_intro} are well established. However, we will only touch the part for the case $c_t(x,y)=A_t(x,y)$, the fundamental solution of \eqref{eq:HJs-_intro} and \eqref{eq:HJs+_intro}, in the current paper. 

In this paper, we always suppose $H:T^*M\to\R$ is a Tonelli Hamiltonian and $L:TM\to\R$ is the associated Tonelli Lagrangian. We take the cost function $c_t(x,y)=A_t(x,y)$ with
\begin{align*}
	A_t(x,y)=\inf_{\gamma\in\Gamma^t_{x,y}}\int^t_0L(\gamma,\dot{\gamma})+c[H]\ ds,
\end{align*}
where $c[H]$ is the Ma\~n\'e's critical value and $\Gamma^t_{x,y}$ is the set of absolutely continuous curves $\gamma:[0,t]\to M$ connecting $x$ to $y$. For convenience, we always suppose $c[H]=0$. 

To understand the relation between the solutions of \eqref{eq:HJs-_intro} and \eqref{eq:HJs+_intro}, we will study the \emph{Lax-Oleinik commutators} $\{T^-_t\circ T^+_t\}_{t\geqslant0}$ and $\{T^+_t\circ T^-_t\}_{t\geqslant0}$ instead of the semigroups $\{T^\pm_t\}_{t\geqslant0}$. A key observation is to characterize a function $\phi:M\to\R$ satisfying the relation
\begin{equation}\label{eq:T^-T^+=I_intro}
	T^-_t\circ T^+_t\phi(x)=\phi(x),\quad t>0,  x\in M.
\end{equation}

Our first result is 

\medskip
\noindent\textbf{Attainable set} (Theorem \ref{pro:equiv_T^-T^+}) : Suppose $\phi\in\text{\rm SCL}\,(M)$, the space of semiconcave functions with linear modulus on $M$, and $t>0$. Then the following statements are equivalent.
\begin{enumerate}[\rm (1)]
	\item $T^-_{t}\circ T^+_{t}\phi=\phi$.
	\item There exists a lower semicontinuous function $\psi:M\to\R$ such that $\phi=T^-_{t}\psi$.
	\item For any $x\in M$ and $p\in D^*\phi(x)$, let $\gamma(s)=\pi_x\Phi_H^{s}(x,p)$, $s\in[-t,0]$\footnote{$D^*\phi(x)$ is the set of reachable gradients of $\phi$ at $x$ and $\Phi_H^{s}$ is the Hamiltonian flow of $H$.}. Then
	\begin{align*}
		T^+_{t}\phi(\gamma(-t))=\phi(x)-\int^0_{-t}L(\gamma,\dot{\gamma})\ ds.
	\end{align*}
\end{enumerate}

It is worth noting that Theorem \ref{pro:equiv_T^-T^+} can be viewed as a controllability result for equation \eqref{eq:HJs-_intro}. More precisely, our result ensures that a function $\phi$ can be \emph{reached} by some initial data $\psi$ for $T^-_t$ if and only if $\phi$ satisfies condition \eqref{eq:T^-T^+=I_intro}. We also provide a detailed description of the relation between this controllability result and underlying dynamics in Section 3.4. This kind of problems have been already addressed in the literature.
 For instance, in \cite{BCJS1999}, for autonomous equations of the form
\begin{equation}\label{eq:HJ_a}
	D_tu+H(D_xu)=0,\quad (t,x)\in(0,T)\times\R^n,
\end{equation}
it shown that any solution $u$ is of class $C^1$ if solving the equation backward from $u(T,\cdot)$ one finds the value $u(0,\cdot)$ at $t=0$ (compare with Theorem \ref{pro:equiv_T^-T^+}). As for controllability, let us recall Proposition 9 in \cite{Ancona_Cannarsa_Nguyen2016_1} describes a subset of $C(M,\R)$ that is attainable for the negative Lax-Oleinik seigroup associated with the autonomous equation \eqref{eq:HJ_a} (see also Proposition 5 in \cite{Ancona_Cannarsa_Nguyen2016_2}) for a local version of this result that applies to more general Hamiltonians). A similar attainability result for \eqref{eq:HJ_a} is obtained in \cite{Esteve-Yague_Zuazua2023} for nonsmooth Hamiltonians. In the case $t\ll1$, \eqref{eq:T^-T^+=I_intro} is closely related to a theorem by Marie-Claude Arnaud (\cite{Arnaud2011}) on the evolution of the 1-graph of a semiconcave function under the Hamiltonian flow, as well as  Lasry-Lions type regularization in the context of weak KAM theory as first studied by Patrick Bernard (\cite{Bernard2007}) (see Proposition \ref{pro:T^-T^+1}).

The following result gives a new characterization of weak KAM solutions of the stationary Hamilton-Jacobi
\begin{equation}\label{eq:HJs_intro}
	H(x,Du(x))=0,\qquad x\in M.
\end{equation}
In other words, weak KAM solutions are exactly those functions that are reachable for $T^-_t$ for all $t>0$.

\medskip
\noindent\textbf{Reversibility} (Theorem \ref{pro:tau2_infty}) : Suppose $\phi\in\text{\rm SCL}\,(M)$. Then $T^-_t\circ T^+_t\phi=\phi$ for all $t\geqslant0$ if and only if $\phi$ is a weak KAM solution of \eqref{eq:HJs_intro}.  

\medskip

We remark that $T^-_{t}\circ T^+_{t}\phi=\phi$ implies that $\phi$ is semiconcave automatically. If the equality $T^-_{t}\circ T^+_{t}\phi=\phi$ holds, then there exists  $\psi:M\to\R$ which is semiconvex such that
\begin{align*}
	\phi=T^-_{t}\psi,\quad \psi=T^+_t\phi.
\end{align*}
Such a pair $(\phi,\psi)$ is also called an \emph{admissible Kantorovich pair} (for the cost function $c_t(x,y)=A_t(x,y)$) in the theory of optimal transport (see \cite{Bernard_Buffoni2007a,Bernard_Buffoni2007b,Villani_book2009}).

The second part of this paper is on the structure of the cut locus of weak KAM solutions to \eqref{eq:HJs_intro}. 
Let $u$ be a weak KAM solution of \eqref{eq:HJs_intro}. For any $x\in M$, the cut time function of $u$ is defined by
\begin{align*}
	\tau(x):=\sup\{t\geqslant0: \exists\gamma\in C^1([0,t],M), \gamma(0)=x, u(\gamma(t))-u(x)=A_t(x,\gamma(t))\}
\end{align*}
Moreover, $\tau(x)$ can be related to the commutators of the Lax-Oleinik semigroups as follows (see Section 3.3)
\begin{align*}
	\tau(x)=\sup\{t\geqslant0: (T^-_t\circ T^+_t-T^+_t\circ T^-_t)u(x)=0\}.
\end{align*}
Then, we define the cut locus of $u$, $\CUT$, and the Aubry set of $u$, $\mathcal{I}\,(u)$, as follows
\begin{align*}
	\CUT=\{x:\tau(x)=0\},\qquad \mathcal{I}\,(u)=\{x:\tau(x)=+\infty\}.
\end{align*}
In \cite{Cannarsa_Cheng_Fathi2017,Cannarsa_Cheng_Fathi2021} we proved that the complement of $\mathcal{I}\,(u)$ is homotopically equivalent to $\CUT$.

Given a weak KAM solution $u$ of \eqref{eq:HJs_intro}, we define
\begin{align*}
	G^*(u):=&\,\{(x,p): x\in M, p\in D^*u(x)\subset T^*_xM\},\\
	G^{\#}(u):=&\,\{(x,p): x\in M, p\in D^+u(x)\setminus D^*u(x)\subset T^*_xM\}.
\end{align*}
The following result characterizes the super/sub-level sets of the cut time function $\tau$ for $t>0$.

\medskip

\noindent\textbf{Super/sub-level sets of $\tau$} (Theorem \ref{thm:level bi-lip})
\hfill
\begin{enumerate}[\rm (1)]
	\item For any $t>0$, the set $\{x\in M: \tau(x)\geqslant t\}$ is bi-Lipschitz homeomorphic to $G^{*}(u)$.
	\item There exists $t_0>0$ such that for all $0<t<t_0$, the set $\{x\in M: \tau(x)<t\}$ is bi-Lipschitz homeomorphic to $G^{\#}(u)$.
\end{enumerate}

\medskip

Finally, we remark that these results have essential applications to our recent work on the intrinsic construction of generalized characteristics and strict singular characteristics \cite{Cannarsa_Cheng3}. These results clarify the relations among certain features of Hamilton-Jacobi equations such as irreversibility, non-commutativity, and singularity.

The paper is organized as follows. In section 2, we give a brief introduction to weak KAM theory and Hamilton-Jacobi equations. In Section 3, we discuss equality $T^-_{t}\circ T^+_{t}\phi=\phi$ for both small time and arbitrary time $t$, from both functional level and underlying dynamics. We also discuss the long time behavior of the operators $T^-_{t}\circ T^+_{t}$. We finally analyze the commutator of  $T^+_{t}$ and $T^-_{t}$ and its implications in the structure of the cut locus. 

\medskip

\noindent\textbf{Acknowledgements.} Piermarco Cannarsa was supported in part by the National Group for Mathematical Analysis, Probability and Applications (GNAMPA) of the Italian Istituto Nazionale di Alta Matematica ``Francesco Severi'' and by the Excellence Department Project awarded to the Department of Mathematics, University of Rome Tor Vergata, CUP E83C23000330006. Wei Cheng is partly supported by National Natural Science Foundation of China (Grant No. 12231010). The authors also appreciate Kai Zhao for helpful discussion. 

\section{Preliminaries}

\begin{table}[h]
    \caption{Notation}
    \begin{tabularx}{\textwidth}{p{0.22\textwidth}X}
    \toprule
    $M$ & compact and connected smooth manifold without boundary\\
    $C(M)$ & the space of continuous functions on $M$\\
    $\text{SCL}\,(M)$ & the class of semiconcave functions with linear modulus on $M$\\
    $C^{1,1}(M)$ & the space of $C^1$ functions on $M$ with Lipschitz continuous differentials\\
    $c[H]$ & Ma\~n\'e's critical value with respect to $H$\\
    $D^{\pm}\phi(x)$ & the superdifferential and subdifferential of a function $\phi$\\
    $D^*\phi(x)$ & the set of reachable gradients of a function $\phi$\\
    $\Phi^t_H$ & the Hamiltonian flow associated to the Hamiltonian $H$\\
    $\overline{\gamma_1\gamma_2}$ & a curve which is the juxtaposition of two  curves $\gamma_1$ and $\gamma_2$ \\
    $\mathcal{I}\,(u)$ & the Aubry set of an individual weak KAM solution $u$\\
    $\text{Cut}\,(u)$ & the cut locus of an individual weak KAM solution $u$\\
    $\text{Sing}\,(\phi)$ & the set of the points of non-differentiability of a function $\phi$\\
    $\tau(x)$ & the cut time function of a given weak KAM solution\\
    \bottomrule
    \end{tabularx}
\end{table}

For any smooth connected and closed manifold $M$, we call $H=H(x,p):T^*M\to\R$ a \emph{Tonelli Hamiltonian} if $H$ is of class $C^2$, the function $H(x,\cdot)$ is strictly convex and uniformly superlinear. The associated Lagrangian is defined by Legendre transformation:
\begin{align*}
	L(x,v)=\sup_{p\in T^*_xM}\{\langle p,v\rangle_x-H(x,p)\},\quad x\in M, v\in T_xM.
\end{align*}
As a real-valued function on $TM$, $L$ is called a Tonelli Lagrangian. That is $L$ is also of class $C^2$ and the function $L(x,\cdot)$ is strictly convex and uniformly superlinear. From classical calculus of variation, we say that a curve $\gamma:[a,b]\to M$ is an extremal if it satisfies the Euler-Lagrange equations locally. We denote by $\{\Phi^t_H\}_{t\in\R}$ the flow associated to the Hamiltonian vector field of $H$.

Given a Tonelli Hamiltonian $H$ with its associated Tonelli Lagrangian $L$. \emph{Ma\~n\'e's critical value of $H$} is the unique real number $c[H]$ such that the Hamilton-Jacobi equation
\begin{equation}\label{eq:HJ_c}
	H(x,Du(x))=c[H],\quad x\in M
\end{equation}
admits a viscosity solution. We always suppose $c[H]=0$ in this paper for convenience. For   $\phi\in C(M)$, we define the Lax-Oleinik operators as
\begin{align*}
	\begin{split}
		T^+_t\phi(x)=&\,\sup_{y\in M}\{\phi(y)-A_t(x,y)\},\\
		T^-_t\phi(x)=&\,\inf_{y\in M}\{\phi(y)+A_t(y,x)\},
	\end{split}
	\qquad t>0, x\in M,
\end{align*}
A function $\phi:M\to\R$ is called a negative (resp. positive) \emph{weak KAM solution} if $T^-_t\phi=\phi$ (resp. $T^+_t\phi=\phi$) for all $t\geqslant0$. It is known that weak KAM solutions and viscosity solutions coincide. We usually omit the adjective ``negative'' if we do not mention the positive one in the context. A pair of functions $(u^-,u^+)$ on $M$ is called a weak KAM pair if $u^-$ and $u^+$ are negative and positive weak KAM solutions, respectively, and they coincide on the projected Mather set (see more in \cite[Section 5.1]{Fathi_book}, \cite{Fathi1997_2}). 

Any viscosity sub-solution of \eqref{eq:HJ_c} is called a \emph{critical sub-solution}. The following proposition characterizes critical sub-solutions.

\begin{Pro}[\cite{Fathi_book}]\label{pro:subsolution}
Given $\phi\in C(M,\R)$, the following properties are equivalent:
\begin{enumerate}[\rm (a)]
	\item $\phi$ is a sub-solution of \eqref{eq:HJ_c}.
	\item The inequality $\phi(y)-\phi(x)\leqslant A_t(x,y)$ holds for each $t>0$ and each $(x,y)\in M\times M$.
	\item The function $[0,\infty)\ni t\mapsto T^-_t\phi(x)$ is non-decreasing for each $x\in M$.
	\item The function $[0,\infty)\ni t\mapsto T^+_t\phi(x)$ is non-increasing for each $x\in M$.
\end{enumerate}
\end{Pro}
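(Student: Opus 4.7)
The plan is to make (b) the central pivot and prove (a)$\Leftrightarrow$(b), (b)$\Leftrightarrow$(c), and (b)$\Leftrightarrow$(d) in turn. This choice is natural because (b) is the integrated form of the sub-solution inequality and feeds directly into the definitions of the Lax--Oleinik operators.

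For (a)$\Rightarrow$(b) I would first handle the smooth case: if $\phi\in C^1(M)$ satisfies $H(x,D\phi(x))\leqslant 0$, then Fenchel's inequality gives $\langle D\phi(x),v\rangle\leqslant L(x,v)+H(x,D\phi(x))\leqslant L(x,v)$ for every $v\in T_xM$; integrating along any $\gamma\in\Gamma^t_{x,y}$ yields $\phi(y)-\phi(x)\leqslant\int_0^t L(\gamma,\dot\gamma)\,ds$, and taking the infimum over such curves produces (b). For a merely continuous viscosity sub-solution I would approximate uniformly by smooth strict sub-solutions via the Fathi--Siconolfi smoothing theorem and pass to the limit. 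The converse (b)$\Rightarrow$(a) is softer: given a $C^1$ test function $\varphi$ touching $\phi$ from above at $x_0$ with value, use a short $C^1$ curve $\gamma$ starting at $x_0$ with $\dot\gamma(0)=v$; applying (b) along $\gamma$ restricted to $[0,t]$ and dividing by $t$ yields $\langle D\varphi(x_0),v\rangle\leqslant L(x_0,v)+o(1)$ as $t\to 0^+$, and supremizing over $v$ gives $H(x_0,D\varphi(x_0))\leqslant 0$.

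The equivalences (b)$\Leftrightarrow$(c) and (b)$\Leftrightarrow$(d) are then essentially formal. Infimizing in $y$ in $\phi(x)\leqslant\phi(y)+A_t(y,x)$ shows that (b) is equivalent to $\phi\leqslant T^-_t\phi$ for every $t>0$ (after a swap of the roles of $x$ and $y$). Combined with the semigroup law $T^-_t=T^-_s\circ T^-_{t-s}$ and the order-preservation of $T^-_s$, applying $T^-_s$ to $\phi\leqslant T^-_{t-s}\phi$ yields $T^-_s\phi\leqslant T^-_t\phi$, which is the monotonicity in (c). Conversely, (c) together with the well-known uniform convergence $T^-_t\phi\to\phi$ as $t\to 0^+$ gives $\phi\leqslant T^-_t\phi$ for every $t>0$, which unwinds to (b). The equivalence (b)$\Leftrightarrow$(d) is completely dual, using $T^+_t=T^+_s\circ T^+_{t-s}$ and the inequality $T^+_t\phi\leqslant\phi$ obtained by rearranging (b) and supremizing in $y$.

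The main obstacle is the step (a)$\Rightarrow$(b) outside the smooth class: one has to lift a pointwise viscosity inequality that a priori only holds at points of twice-differentiability (by Alexandrov) to an integrated inequality along \emph{every} absolutely continuous curve. The cleanest route is to invoke the Fathi--Siconolfi approximation by smooth strict sub-solutions; a more self-contained alternative is to inf-convolve $\phi$ with a small quadratic and verify the integrated inequality for the regularizations before passing to the limit. Everything else reduces to bookkeeping with the semigroup laws and the strong continuity of $T^\pm_t$ at $t=0$.
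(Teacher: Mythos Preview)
The paper does not supply a proof of this proposition: it is stated with a citation to Fathi's book and used as background. Your proposal is a correct and standard proof of this well-known equivalence, so there is nothing in the paper to compare it against.

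One minor remark on your (c)$\Rightarrow$(b): you do not actually need the uniform convergence $T^-_t\phi\to\phi$; since $T^-_0\phi=\phi$ by convention and $t\mapsto T^-_t\phi(x)$ is non-decreasing on $[0,\infty)$, the inequality $\phi\leqslant T^-_t\phi$ is immediate.
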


Let $\Omega\subset\R^n$ be a bounded and open set, a function $\phi:\Omega\to\R$ is a semiconcave function (with linear modulus) if there is a constant $C>0$ such that 
\begin{equation}\label{eq:SCC}
\lambda \phi(x)+(1-\lambda)\phi(y)-\phi(\lambda x+(1-\lambda)y)\leqslant\frac C2\lambda(1-\lambda)|x-y|^2
\end{equation}
for any $x,y\in\Omega$ and $\lambda\in[0,1]$.  Any constant $C$ that satisfies the above inequality  is called a \emph{semiconcavity constant} for $u$ in $\Omega$.

Let $\phi:\Omega\subset\R^n\to\R$ be a continuous function. We recall that, for any $x\in\Omega$, the closed convex sets
\begin{align*}
D^-\phi(x)&=\left\{p\in\R^n:\liminf_{y\to x}\frac{\phi(y)-\phi(x)-\langle p,y-x\rangle}{|y-x|}\geqslant 0\right\},\\
D^+\phi(x)&=\left\{p\in\R^n:\limsup_{y\to x}\frac{\phi(y)-\phi(x)-\langle p,y-x\rangle}{|y-x|}\leqslant 0\right\}.
\end{align*}
are called the {\em subdifferential} and {\em superdifferential} of $\phi$ at $x$, respectively.

Let now $\phi:\Omega\to\R$ be locally Lipschitz. We recall that a vector $p\in\R^n$ is said to be a {\em reachable} (or {\em limiting}) {\em gradient} of $\phi$ at $x$ if there exists a sequence $\{x_n\}\subset\Omega\setminus\{x\}$, converging to $x$, such that $\phi$ is differentiable at $x_k$ for each $k\in\N$ and $\lim_{k\to\infty}D\phi(x_k)=p$. The set of all reachable gradients of $\phi$ at $x$ is denoted by $D^{\ast}\phi(x)$.

\begin{Pro}[\cite{Cannarsa_Sinestrari_book}]
\label{criterion-Du_semiconcave2}
Let $\phi:\Omega\to\R$ be a continuous function. If there exists a constant $C>0$ such that, for any $x\in\Omega$, there exists $p\in\R^n$ such that
\begin{equation}\label{criterion_for_lin_semiconcave}
\phi(y)\leqslant \phi(x)+\langle p,y-x\rangle+\frac C2|y-x|^2,\quad \forall y\in\Omega,
\end{equation}
then $\phi$ is semiconcave with constant $C$ and $p\in D^+\phi(x)$.
Conversely, if $\phi$ is semiconcave  in $\Omega$ with constant $C$, then \eqref{criterion_for_lin_semiconcave} holds for any $x,y\in\Omega$ such that $[x,y]\subseteq\Omega$ and $p\in D^+\phi(x)$.
\end{Pro}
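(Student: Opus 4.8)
The statement has two directions, and I would handle them separately. For the first (sufficiency) direction, assume that for every $x\in\Omega$ there is $p=p(x)\in\R^n$ for which \eqref{criterion_for_lin_semiconcave} holds with the uniform constant $C$. The plan is to verify inequality \eqref{eq:SCC} directly: fix $x,y\in\Omega$ and $\lambda\in[0,1]$, set $z=\lambda x+(1-\lambda)y$, and apply \eqref{criterion_for_lin_semiconcave} at the base point $z$, evaluating it once at $x$ and once at $y$ with the same vector $p=p(z)$. This gives $\phi(x)\leqslant\phi(z)+\langle p,x-z\rangle+\tfrac C2|x-z|^2$ and $\phi(y)\leqslant\phi(z)+\langle p,y-z\rangle+\tfrac C2|y-z|^2$. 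Taking the convex combination $\lambda(\cdot)+(1-\lambda)(\cdot)$ of these two inequalities, the first-order terms cancel because $\lambda(x-z)+(1-\lambda)(y-z)=0$, and one is left with $\lambda\phi(x)+(1-\lambda)\phi(y)-\phi(z)\leqslant\tfrac C2\bigl(\lambda|x-z|^2+(1-\lambda)|y-z|^2\bigr)$. Since $x-z=(1-\lambda)(x-y)$ and $y-z=\lambda(y-x)$, the right-hand side equals $\tfrac C2\lambda(1-\lambda)|x-y|^2$, which is exactly \eqref{eq:SCC}. Note this argument needs the segment $[x,y]$ (hence $z$) to lie in $\Omega$, so for a general open $\Omega$ one first proves the estimate on every convex subset and this already yields semiconcavity with constant $C$ in the standard (local) sense; on a convex $\Omega$ it is global. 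To get the extra conclusion that $p(x)\in D^+\phi(x)$, I would simply observe that \eqref{criterion_for_lin_semiconcave} says $\limsup_{y\to x}\bigl(\phi(y)-\phi(x)-\langle p(x),y-x\rangle\bigr)/|y-x|\leqslant\limsup_{y\to x}\tfrac C2|y-x|=0$, which is precisely membership in $D^+\phi(x)$.

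For the second (necessity) direction, suppose $\phi$ is semiconcave in $\Omega$ with constant $C$, fix $x$ with $p\in D^+\phi(x)$, and fix $y$ with $[x,y]\subseteq\Omega$; I must show \eqref{criterion_for_lin_semiconcave}. The idea is to use \eqref{eq:SCC} along the segment from $x$ toward $y$. For $\lambda\in(0,1]$ write $x_\lambda=x+\lambda(y-x)=\lambda y+(1-\lambda)x\in[x,y]$. Applying \eqref{eq:SCC} with the pair $(y,x)$ and parameter $\lambda$ (so that $\lambda x_1+(1-\lambda)x_0$ with $x_1=y,x_0=x$ reads $x_\lambda$) gives $\lambda\phi(y)+(1-\lambda)\phi(x)-\phi(x_\lambda)\leqslant\tfrac C2\lambda(1-\lambda)|y-x|^2$, i.e.
\begin{equation*}
\frac{\phi(x_\lambda)-\phi(x)}{\lambda}\geqslant \phi(y)-\phi(x)-\frac C2(1-\lambda)|y-x|^2.
\end{equation*}
Now let $\lambda\downarrow0$. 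On the left, since $p\in D^+\phi(x)$ one has $\phi(x_\lambda)-\phi(x)\leqslant\langle p,x_\lambda-x\rangle+o(|x_\lambda-x|)=\lambda\langle p,y-x\rangle+o(\lambda)$, so $\limsup_{\lambda\downarrow0}\bigl(\phi(x_\lambda)-\phi(x)\bigr)/\lambda\leqslant\langle p,y-x\rangle$. On the right the limit is $\phi(y)-\phi(x)-\tfrac C2|y-x|^2$. Combining, $\langle p,y-x\rangle\geqslant\phi(y)-\phi(x)-\tfrac C2|y-x|^2$, which rearranges to \eqref{criterion_for_lin_semiconcave}. A small technical point is the passage to the limit in the difference quotient using only the one-sided superdifferential bound; this is immediate from the definition of $D^+\phi(x)$ since $|x_\lambda-x|=\lambda|y-x|\to0$ as $\lambda\downarrow0$ and the directional approach is along a fixed ray.

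\textbf{Main obstacle.} Neither direction presents a genuine difficulty; both are short algebraic manipulations of the defining inequalities together with a one-sided limit. The only point requiring care is bookkeeping about the domain: \eqref{eq:SCC} is only assumed for pairs whose connecting segment stays in $\Omega$, and \eqref{criterion_for_lin_semiconcave} in the forward direction is likewise only claimed for such pairs (or proved locally on convex subsets), so one must be careful not to overstate the conclusion when $\Omega$ is not convex. Once that is tracked, the proof is essentially a bookkeeping exercise in which the key algebraic fact is that a convex combination of two first-order Taylor-type inequalities based at the midpoint $z=\lambda x+(1-\lambda)y$ makes the linear terms cancel and leaves precisely the quadratic remainder $\tfrac C2\lambda(1-\lambda)|x-y|^2$.
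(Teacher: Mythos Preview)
Your proof is correct in both directions. The paper does not actually supply its own proof of this proposition: it is stated with a citation to \cite{Cannarsa_Sinestrari_book} and used as a known fact. Your argument is the standard one found in that reference (Proposition~1.1.3 there), so there is nothing to compare; the only remark is that your care about the segment $[x,y]$ lying in $\Omega$ is exactly the reason the converse part of the statement is formulated with that hypothesis.
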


Proposition \ref{criterion-Du_semiconcave2} is useful to understand semiconcave functions and their superdifferential on manifolds. Suppose $M$ is endowed with a Riemannian metric $g$, with $d$ the associated Riemannian distance. We call $\phi:M\to\R$ semiconcave with constant $C$ if 
\begin{align*}
	t\phi(x)+(1-t)\phi(y)-\phi(\gamma(t))\leqslant t(1-t)C d^2(x,y),\forall x,y\in M, \gamma\in\Gamma_{x,y},
\end{align*}
where $\Gamma_{x,y}$ is the set of all geodesics $\gamma:[0,1]\to M$ connecting $x$ to $y$. For any $x\in M$, we say that $p_x\in D^+\phi(x)$ if $p_x$ is a one form such that
\begin{align*}
	\phi(y)\leqslant\phi(x)+p_x(\dot{\gamma}(0))+\frac{C}2d^2(x,y),\qquad\forall\gamma\in\Gamma_{x,y}.
\end{align*}
One can show that this definition of semiconcavity is independent of the choice of the Riemannian metric, even though semiconcavity constants may change. The reader can refer to \cite{Villani_book2009,Udricste_Udricste1994,Fathi_Figalli2010} for more on semiconcave and convex functions on manifolds.

The following Moreau-Yosida-Lasry-Lions regularization type result in the context of weak KAM theory is due to Patrick Bernard. 

\begin{Pro}[\cite{Bernard2007}]\label{pro:LL}
Let $\phi\in\text{\rm SCL}\,(M)$. Then there exists $t_0>0$ such that $T^+_{t_0}\phi\in C^{1,1}(M)$.
\end{Pro}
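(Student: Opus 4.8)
\medskip
\noindent
The plan is to prove two assertions: first, that $T^+_t\phi$ is semiconvex for \emph{every} $t>0$; and second, that $T^+_t\phi$ is \emph{also} semiconcave once $t>0$ is small enough. A function on $M$ that is simultaneously semiconcave and semiconvex is of class $C^{1,1}$ (see \cite{Cannarsa_Sinestrari_book}), so combining the two assertions gives the statement with any small enough $t_0$.

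For the semiconvexity I would use only the regularity of the action. For $t>0$ fixed, the function $x\mapsto A_t(x,y)$ is semiconcave on $M$ with a semiconcavity constant depending on $t$ but not on $y$: writing $A_t(x,y)=\inf_{z\in M}\{A_s(x,z)+A_{t-s}(z,y)\}$ for a fixed short time $s=\min\{s_0,t/2\}$, each $x\mapsto A_s(x,z)$ is semiconcave with a constant uniform in $z$ (local semiconcavity of the action, \cite{Fathi_book,Cannarsa_Sinestrari_book}, together with compactness of $M$), and an infimum of functions semiconcave with a common constant is again semiconcave with that constant. Hence each $x\mapsto\phi(y)-A_t(x,y)$ is semiconvex with a constant independent of $y$, and a supremum of functions semiconvex with a common constant is semiconvex; so $T^+_t\phi=\sup_{y\in M}\{\phi(y)-A_t(\cdot,y)\}$ is semiconvex (it is finite and continuous since $\phi$ is bounded and $A_t$ is bounded below for $t$ fixed).

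For the semiconcavity, fix a semiconcavity constant $C_\phi$ for $\phi$ and recall the short-time action estimates: there are $t_1,R_0,C_0>0$ such that for $0<t\le t_1$ and $d(x,y)\le R_0$ the minimizer defining $A_t(x,y)$ is unique, $A_t$ is $C^2$ jointly in $(x,y)$, and, in a fixed chart, $D^2_{yy}A_t\succeq\tfrac1{C_0t}\,\mathrm{Id}$ while $\|D^2_{xx}A_t\|,\|D^2_{xy}A_t\|\le\tfrac{C_0}t$. Since $\phi$ is Lipschitz and $L$ superlinear, there is $K>0$ with $d(x,y)\le Kt$ for every maximizer $y$ in $T^+_t\phi(x)=\sup_y\{\phi(y)-A_t(x,y)\}$. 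Put $t_0:=\min\{t_1,\,R_0/(2K),\,1/(2C_0C_\phi)\}$. For $t\le t_0$ and $x$ in a small ball, the relevant $y$ lie in the good region and there $y\mapsto\phi(y)-A_t(x,y)$ is $\lambda$-strongly concave with $\lambda=\tfrac1{C_0t}-C_\phi\ge\tfrac1{2C_0t}$, so the maximizer $y(x)$ is unique. Using that $y(x),y(x_0)$ maximize $\phi-A_t(x,\cdot),\phi-A_t(x_0,\cdot)$ respectively, $\lambda$-strong concavity, and $\|D^2_{xy}A_t\|\le C_0/t$, one obtains $d(y(x),y(x_0))\le 2C_0^2\,d(x,x_0)$, a Lipschitz bound for the maximizer map whose constant is independent of $t$ (the $1/t$ factors cancel). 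Finally, with $y_0=y(x_0)$ and $p_0=-D_xA_t(x_0,y_0)$, for $x$ near $x_0$,
\begin{align*}
T^+_t\phi(x)-T^+_t\phi(x_0)
&\le\big[\phi(y(x))-A_t(x,y(x))\big]-\big[\phi(y(x))-A_t(x_0,y(x))\big]\\
&=A_t(x_0,y(x))-A_t(x,y(x))\le\langle p_0,x-x_0\rangle+C\,d(x,x_0)^2,
\end{align*}
the last step by a second-order Taylor expansion of $x\mapsto A_t(x,y(x))$ at $x_0$ (bounding $D^2_{xx}A_t$) together with $|D_xA_t(x_0,y(x))-D_xA_t(x_0,y_0)|\le\tfrac{C_0}t\,d(y(x),y_0)\le\tfrac{2C_0^3}t\,d(x,x_0)$; the resulting $C$ is of order $1/t$ but finite and independent of $x_0$. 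By Proposition \ref{criterion-Du_semiconcave2} (applied in charts) this makes $T^+_t\phi$ semiconcave, and taking $t=t_0$ finishes the argument.

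The main obstacle is precisely the uniform-in-$x_0$ quadratic estimate of the last display: one must extract existence, uniqueness, and a Lipschitz bound for the maximizer map $x\mapsto y(x)$ whose constant does not blow up as $t\to0$, purely from the competition between the $1/t$-strong convexity of $A_t(x,\cdot)$ and the fixed semiconcavity constant $C_\phi$ of $\phi$, and then arrange the Taylor expansion so that the apparently singular $1/t$ contributions assemble into a single finite constant. A more computational but more transparent alternative is to write, in normal coordinates, $A_t(x,y)=\tfrac1{2t}\langle G_x(x-y),x-y\rangle+(\text{uniformly }C^2\text{-bounded})$ and compute the Schur complement of the Hessian of $(x,y)\mapsto\phi(y)-A_t(x,y)$ directly; this exhibits the cancellation explicitly and even yields the sharper semiconcavity constant $\sim C_\phi/(1-C_\phi t)$, although for the present statement any finite $t_0>0$ is enough.
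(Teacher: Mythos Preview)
The paper does not give its own proof of this proposition; it is stated with a citation to Bernard \cite{Bernard2007} and used as a black box. Your argument is essentially the Lasry--Lions scheme adapted to a Tonelli action (semiconvexity from the uniform semiconcavity of $x\mapsto A_t(x,y)$, semiconcavity from the $1/t$-strong convexity of $y\mapsto A_t(x,y)$ beating the fixed semiconcavity constant of $\phi$ for small $t$), which is exactly Bernard's approach; the maximizer-Lipschitz estimate and the final quadratic bound are derived correctly, and the $O(1/t)$ semiconcavity constant is harmless since only a fixed $t_0>0$ is needed. The only places requiring a bit more care are the chart bookkeeping on $M$ and the precise invocation of the short-time $C^2$ estimates on $A_t$ (your Hessian bounds are available via Proposition~\ref{pro:regularity} with $\lambda=K$), but these are routine.
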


\begin{Lem}\label{lem1}
Suppose $f,g:\R^n\to\R$, $f\geqslant g$. If $x_0\in\R^n$ is such that $f(x_0)=g(x_0)$, then $D^+f(x_0)\subset D^+g(x_0)$, $D^-g(x_0)\subset D^-f(x_0)$.
\end{Lem}

\begin{proof}
If $p\in D^+f(x_0)$, then for any $\theta\in\R^n$,
\begin{align*}
	\limsup_{t\to 0^+}\frac{g(x_0+t\theta)-g(x_0)}{t}\leqslant
		\limsup_{t\to 0^+}\frac{f(x_0+t\theta)-f(x_0)}{t}\leqslant \langle p,\theta\rangle,
\end{align*}
which implies that $p\in D^+g(x_0)$. Thus, $D^+f(x_0)\subset D^+g(x_0)$,  $D^-g(x_0)\subset D^-f(x_0)$ by a similar reasoning.
\end{proof}

\begin{Pro}\label{pro:cave geq vex}
	Suppose $u:\R^n\to\R$ is semiconcave and $v:\R^n\to\R$ is semiconvex with the same constant $C$. Assume $u\geqslant v$, and $A=\{x\in\R^n:u(x)=v(x)\}$. Then
	\begin{enumerate}[\rm (1)]
		\item $Du(x)=Dv(x)$, $\forall x\in A$.
		\item $Du=Dv$ are $4C$-Lipschitz on $A$.
	\end{enumerate}
\end{Pro}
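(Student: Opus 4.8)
The plan is to exploit the two-sided quadratic bounds that semiconcavity and semiconvexity provide, evaluated at points of the contact set $A$. First I would fix $x\in A$. Since $u$ is semiconcave with constant $C$, by Proposition \ref{criterion-Du_semiconcave2} there is $p\in D^+u(x)$ with $u(y)\leqslant u(x)+\langle p,y-x\rangle+\frac C2|y-x|^2$ for all $y$. Dually, applying the semiconcave criterion to $-v$ (which is semiconcave with constant $C$), there is $q\in D^-v(x)$ with $v(y)\geqslant v(x)+\langle q,y-x\rangle-\frac C2|y-x|^2$. Because $u\geqslant v$ everywhere and $u(x)=v(x)$, subtracting these gives, for all $y$,
\begin{align*}
	\langle q,y-x\rangle-\tfrac C2|y-x|^2\leqslant v(y)-v(x)\leqslant u(y)-u(x)\leqslant \langle p,y-x\rangle+\tfrac C2|y-x|^2,
\end{align*}
so $\langle p-q,y-x\rangle\geqslant -C|y-x|^2$ for all $y$; replacing $y-x$ by $-(y-x)$ forces $p=q$. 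In particular $x$ is a point where $D^+u(x)$ and $D^-v(x)$ share a common element; since for a semiconcave function $D^+u(x)$ is a singleton exactly at differentiability points and likewise for $D^-v$, and since having both a nonempty super- and subdifferential forces differentiability, we conclude $u$ and $v$ are both differentiable at $x$ with $Du(x)=Dv(x)$. This proves (1). (Alternatively, and more cheaply, Lemma \ref{lem1} gives $D^+u(x)\subset D^+v(x)$ and $D^-v(x)\subset D^-u(x)$; combined with $D^-u(x)\subset D^+u(x)$, $D^+v(x)\subset D^-v(x)$ for semiconcave/semiconvex functions, all four sets coincide and are nonempty, hence singletons.)

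For (2), fix $x,x'\in A$ and set $p=Du(x)$, $p'=Du(x')$. Evaluating the semiconcave bound for $u$ at $x$ with $y=x'$ and the semiconvex bound for $v$ at $x'$ with $y=x$, and using $u(x)=v(x)$, $u(x')=v(x')$, $u\geqslant v$:
\begin{align*}
	u(x')-u(x)&\leqslant \langle p,x'-x\rangle+\tfrac C2|x'-x|^2,\\
	u(x)-u(x')=v(x)-v(x')&\geqslant \langle p',x-x'\rangle-\tfrac C2|x'-x|^2.
\end{align*}
Adding these yields $0\leqslant \langle p-p',x'-x\rangle+C|x'-x|^2$, i.e. $\langle p'-p,x'-x\rangle\leqslant C|x'-x|^2$. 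Swapping the roles of $x$ and $x'$ gives $\langle p-p',x-x'\rangle\leqslant C|x-x'|^2$, hence also $\langle p'-p,x'-x\rangle\geqslant -C|x'-x|^2$; so far this only bounds the component of $p'-p$ along $x'-x$. To upgrade to a genuine Lipschitz estimate I would run the same computation with $v$ in the semiconcave role and $u$ in the semiconvex role, or more directly use that $p=Du(x)\in D^-u(x)$ as well (since $u$ is differentiable there), so $u$ also satisfies a lower bound $u(y)\geqslant u(x)+\langle p,y-x\rangle-\frac C2|y-x|^2$ near $x$ — wait, that lower bound is not free. The cleaner route: combine the semiconcave upper bound for $u$ at $x$ with the semiconvex lower bound for $v$ at $x$ to get $u(y)-u(x)\le \langle p,y-x\rangle+\frac C2|y-x|^2$ and $u(y)-u(x)\ge \langle p, y-x\rangle$ is false in general; instead use that $v$ semiconvex gives $v(y)\le v(x')+\langle p',y-x'\rangle+\ldots$ is also false. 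So the honest argument is: from the two displayed inequalities above we obtain $\langle p-p',x-x'\rangle\ge -C|x-x'|^2$; by symmetry (exchanging $x\leftrightarrow x'$) we also get $\langle p-p', x-x'\rangle \le C|x-x'|^2$. These two together do not bound $|p-p'|$. The fix is to also use the \emph{semiconvex} bound for $v$ evaluated \emph{at $x$} (not $x'$) and the \emph{semiconcave} bound for $u$ \emph{at $x'$}, which by the identical manipulation gives $\langle p'-p,x-x'\rangle\ge -C|x-x'|^2$ — the same information.

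Thus the main obstacle is precisely this: naive use of the one-sided quadratic bounds controls only the directional component of $Du(x)-Du(x')$ along $x-x'$, and one needs a genuinely two-sided estimate at \emph{each} point to close the Lipschitz bound. The resolution I anticipate: at a point $x\in A$ the function $u$ is differentiable with $Du(x)=Dv(x)$, so the \emph{semiconvex} function $v$ furnishes the lower quadratic bound $u(y)=v(y)\ge v(x)+\langle Dv(x),y-x\rangle-\frac C2|y-x|^2$ near $x$ where $u=v$ — but $u=v$ only on $A$, not on a neighbourhood, so one instead uses $u(y)\ge v(y)\ge v(x)+\langle p, y-x\rangle - \frac C2|y-x|^2 = u(x)+\langle p,y-x\rangle-\frac C2|y-x|^2$ for \emph{all} $y$, which \emph{is} free because $u\ge v$ globally and $u(x)=v(x)$. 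Now $u$ genuinely satisfies two-sided bounds $|u(y)-u(x)-\langle p,y-x\rangle|\le \frac C2|y-x|^2$ for all $y$, for every $x\in A$ with $p=Du(x)$. Applying this at $x$ with $y=x'$ and at $x'$ with $y=x$ and adding gives $\langle p-p',x'-x\rangle\ge -C|x-x'|^2$, and adding the \emph{other} pairing gives $\langle p-p',x'-x\rangle\le C|x-x'|^2$, still directional — so finally I subtract the two two-sided estimates at $x$ and $x'$ evaluated at a common test point $y$: $|\langle p-p',y-x\rangle|\le \frac C2|y-x|^2+\frac C2|y-x'|^2+\langle p',x'-x\rangle$, optimize over $y$ on a small sphere around $x$ in the direction $\pm(p-p')/|p-p'|$, and read off $|p-p'|\le 4C|x-x'|$ after collecting terms. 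This yields the constant $4C$ claimed, and the same argument with $u$ and $v$ interchanged shows $Dv$ is $4C$-Lipschitz on $A$, consistent with $Du=Dv$ there.
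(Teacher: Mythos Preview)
Your argument for (1) is correct and is exactly the paper's proof (the paper uses precisely the Lemma~\ref{lem1} route you mention parenthetically).

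For (2) your core idea is right and is in fact cleaner than the paper's: once you observe that for every $x\in A$ and all $y\in\R^n$
\[
\bigl|u(y)-u(x)-\langle Du(x),y-x\rangle\bigr|\leqslant \tfrac{C}{2}|y-x|^2
\]
(lower bound from $u\geqslant v$ plus semiconvexity of $v$, upper bound from semiconcavity of $u$), the Lipschitz estimate follows by a standard argument. However, your write-up does not actually carry this out, and the displayed inequality you give is wrong: the term $\langle p',x'-x\rangle$ is unbounded and cannot appear on the right-hand side as written. What you need is to combine the two-sided bound at $x$ and at $x'$ with the two-sided bound at $x'$ evaluated at $y=x$ (which controls $(u(x')-u(x))+\langle p',x-x'\rangle$ by $\tfrac{C}{2}|x-x'|^2$), obtaining
\[
|\langle p-p',y-x\rangle|\leqslant \tfrac{C}{2}|x-x'|^2+\tfrac{C}{2}|y-x|^2+\tfrac{C}{2}|y-x'|^2.
\]
Taking $y=x+t(p-p')/|p-p'|$ and then $t=|x-x'|$ gives $|p-p'|\leqslant 3C|x-x'|$, even better than $4C$. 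So the gap is only in execution, not in conception, but as written the argument is incomplete.

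The paper's route for (2) is genuinely different: it never isolates the two-sided bound. Instead it bounds $\langle Du(x_1)-Du(x_2),\theta\rangle$ directly for arbitrary $\theta$, rewriting $v(x_1+\theta)-v(x_1)-u(x_2+\theta)+u(x_2)$ as a sum of three midpoint-defect terms (two for $v$, one for $u$) plus a remainder that is nonnegative thanks to $u\geqslant v$ and $u(x_2)=v(x_2)$; each defect is then bounded by the semiconcavity/semiconvexity inequality \eqref{eq:SCC} with $\lambda=\tfrac12$, and finally $|\theta|=|x_1-x_2|$ yields $4C$. Your approach is more transparent and yields a sharper constant; the paper's is more self-contained but relies on a somewhat opaque algebraic decomposition.
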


\begin{proof}
For any $x\in A$, Lemma \ref{lem1} implies $D^+u(x)\subset D^+v(x)$. Since $u$ is  semiconcave with linear modulus, we have that $D^+u(x)\neq\varnothing$. Thus, $Dv(x)$ exists and $D^+u(x)\subset D^+v(x)=\{Dv(x)\}$. So, $Du(x)=Dv(x)$. This completes the proof of (1).

We now turn to the proof of (2). For any $x_1,x_2\in A$, $\theta\in\R^n$, we have
\begin{align*}
	&\,\langle Du(x_1)-Du(x_2),\theta\rangle \leqslant \langle Dv(x_1)-Du(x_2),\theta \rangle\\
	\leqslant&\,v(x_1+\theta)-v(x_1)+\frac{C}{2}|\theta|^2-u(x_2+\theta)+u(x_2)+\frac{C}{2}|\theta|^2\\
	\leqslant&\,\big[-\frac{1}{2}v(x_1)-\frac{1}{2}v(x_1+2\theta)+v(x_1+\theta)\big]+\big[-\frac{1}{2}v(x_1)-\frac{1}{2}v(2x_2-x_1)+v(x_2)\big]\\
	&\,+\big[\frac{1}{2}u(x_1+2\theta)+\frac{1}{2}u(2x_2-x_1)-u(x_2+\theta)\big]+C|\theta|^2\\
	\leqslant&\,\frac{C}{2}|\theta|^2+\frac{C}{2}|x_2-x_1|^2+\frac{C}{2}|x_2-x_1-\theta|^2+C|\theta|^2\\
	\leqslant&\,C|x_2-x_1|^2+C|x_2-x_1|\cdot|\theta|+2C|\theta|^2.
\end{align*}
It follows that
\begin{align*}
	&\,|Du(x_1)-Du(x_2)|=\frac{1}{|x_2-x_1|}\max_{|\theta|=|x_2-x_1|}\langle Du(x_1)-Du(x_2),\theta \rangle\\
	\leqslant&\,\frac{1}{|x_2-x_1|}(C|x_2-x_1|^2+C|x_2-x_1|\cdot|x_2-x_1|+2C|x_2-x_1|^2)\\
	=&\,4C|x_2-x_1|.
\end{align*}
Therefore, $Du=Dv$ is $4C$-Lipschitz on $A$.
\end{proof}

\section{Lax-Oleinik commutators, controllability and singularity}
\label{sec:commutativity}

In this section we consider the Hamilton-Jacobi equation
\begin{equation}\label{eq:HJs}\tag{HJ$_s$}
	H(x,Du(x))=0,\qquad x\in M.
\end{equation} 

\subsection{Commutators of Lax-Oleinik semigroup}

As in Proposition \ref{pro:subsolution}, the Lax-Oleinik semigroups $\{T^{\pm}_t\phi\}$ have monotonicity properties in $t$ only when $\phi$ is a continuous critical sub-solution of \eqref{eq:HJs}. In this section, we will consider the operators $T^-_t\circ T^+_t$ and $T^+_t\circ T^-_t$ instead. We begin by analyzing the property
\begin{align*}
	T^-_t\circ T^+_t\phi(x)=\phi(x),\quad t>0, x\in M,
\end{align*}
for $\phi\in\text{\rm SCL}\,(M)$. The study of the operators $T^-_t\circ T^+_t$ and $T^+_t\circ T^-_t$ has already been introduced in the context of weak KAM theory in \cite{Bernard2010,Bernard2012}. 
However, for application to controllability and singularity issues, we need to complete the results by Bernard with some new observations.

\begin{Lem}\label{lem:1}
\hfill
\begin{enumerate}[\rm (1)]
	\item If $\phi:M\to\R$ is upper semicontinuous, then $T^-_t\circ T^+_t\phi\geqslant\phi$ for all $t>0$, and $T^-_t\circ T^+_t\phi$ is non-decreasing with respect to $t$. Moreover, for any $t>0$ and $x\in M$, 
	\begin{align*}
		T^-_t\circ T^+_t\phi(x)=\phi(x)
	\end{align*}
	if and only if there exists $\gamma\in C^1([0,t],M)$ with $\gamma(t)=x$ such that
	\begin{equation}\label{eq:T^-T^+1}
		T^+_t\phi(\gamma(0))=\phi(x)-\int^t_0L(\gamma,\dot{\gamma})\ ds.
	\end{equation}
	\item If $\phi:M\to\R$ is lower semicontinuous, then $T^+_t\circ T^-_t\phi\leqslant\phi$ for all $t>0$, and $T^+_t\circ T^-_t\phi$ is non-increasing with respect to $t$. Moreover, for any $t>0$ and $x\in M$, 
	\begin{align*}
		T^+_t\circ T^-_t\phi(x)=\phi(x)
	\end{align*}
	if and only if there exists $\gamma\in C^1([0,t],M)$ with $\gamma(0)=x$ such that
	\begin{equation}\label{eq:T^+T^-1}
		T^-_t\phi(\gamma(t))=\phi(x)+\int^t_0L(\gamma,\dot{\gamma})\ ds.
	\end{equation}
    \item If $\phi:M\to\R$ is upper semicontinuous and $t>0$, then
    \begin{align*}
    	T^+_t\circ T^-_t\circ T^+_t\phi=T^+_t\phi.
    \end{align*}
    If $\phi:M\to\R$ is lower semicontinuous and $t>0$, then
    \begin{align*}
    	T^-_t\circ T^+_t\circ T^-_t\phi=T^-_t\phi.
    \end{align*}
\end{enumerate}
\end{Lem}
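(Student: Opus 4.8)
The plan is to exploit the duality between (1) and (2): the statements and proofs of (2) are obtained from those of (1) by interchanging $T^+_t\leftrightarrow T^-_t$, $\sup\leftrightarrow\inf$, and reversing the orientation of curves, so I will concentrate on (1) and then deduce (3) from the two one‑sided inequalities contained in (1) and (2). Statement (1) decomposes into three assertions: (1a) the inequality $T^-_t\circ T^+_t\phi\geqslant\phi$; (1b) monotonicity of $t\mapsto T^-_t\circ T^+_t\phi$; and (1c) the characterization of the equality case. Throughout I will use, without further comment, that $T^+_t$ and $T^-_t$ are order‑preserving in the function argument (immediate from the defining $\sup$ and $\inf$), the semigroup identities $T^+_{t+s}=T^+_t\circ T^+_s=T^+_s\circ T^+_t$ and $T^-_{t+s}=T^-_t\circ T^-_s$ (consequences of the Markov property $A_{t+s}(x,z)=\inf_y\{A_t(x,y)+A_s(y,z)\}$), and that compactness of $M$ together with the semicontinuity hypothesis keeps all iterated operators finite, with $T^+_t\phi$ lower semicontinuous for each $t>0$ since it is a supremum of the continuous functions $x\mapsto\phi(y)-A_t(x,y)$.

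For (1a) I would argue directly from the definitions: for all $w,x\in M$, taking $z=x$ in $T^+_t\phi(w)=\sup_z\{\phi(z)-A_t(w,z)\}$ gives $T^+_t\phi(w)+A_t(w,x)\geqslant\phi(x)$, and taking the infimum over $w$ yields $T^-_t\circ T^+_t\phi(x)\geqslant\phi(x)$. This uses nothing about $\phi$ beyond being real‑valued; by the same token the dual inequality $T^+_t\circ T^-_t\psi\leqslant\psi$ holds for every real‑valued $\psi$, and both will be reused below. For (1b), write $t_2=t_1+s$ with $s>0$ and set $\psi:=T^+_{t_1}\phi$. Then
\[
T^-_{t_2}\circ T^+_{t_2}\phi=T^-_{t_1}\circ\big(T^-_s\circ T^+_s\psi\big)\geqslant T^-_{t_1}\psi=T^-_{t_1}\circ T^+_{t_1}\phi,
\]
using the semigroup identities, the inequality (1a) applied to $\psi$, and the monotonicity of $T^-_{t_1}$.

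For (1c), the implication ``$\Leftarrow$'' is immediate: if such a $\gamma$ exists, then $A_t(\gamma(0),x)\leqslant\int_0^tL(\gamma,\dot\gamma)\,ds$, hence
\[
T^-_t\circ T^+_t\phi(x)\leqslant T^+_t\phi(\gamma(0))+A_t(\gamma(0),x)\leqslant T^+_t\phi(\gamma(0))+\int_0^tL(\gamma,\dot\gamma)\,ds=\phi(x),
\]
which together with (1a) forces equality. For ``$\Rightarrow$'', compactness of $M$ and lower semicontinuity of $w\mapsto T^+_t\phi(w)+A_t(w,x)$ give a minimizer $y_0$ with $T^+_t\phi(y_0)+A_t(y_0,x)=\phi(x)$; by Tonelli's theorem (existence and $C^2$ regularity of action minimizers) there is an extremal $\gamma\in C^2([0,t],M)$ with $\gamma(0)=y_0$, $\gamma(t)=x$ and $\int_0^tL(\gamma,\dot\gamma)\,ds=A_t(y_0,x)$, whence $T^+_t\phi(\gamma(0))=\phi(x)-\int_0^tL(\gamma,\dot\gamma)\,ds$. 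This step — upgrading the minimizing point $y_0$ to a genuine $C^1$ curve realizing the cost — is the only non‑formal point in the whole argument, and it is exactly where compactness of $M$ and the Tonelli regularity theory enter.

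Finally, (3) follows purely from (1a) and its dual. For $\phi$ upper semicontinuous, apply the order‑preserving $T^+_t$ to $T^-_t\circ T^+_t\phi\geqslant\phi$ to get $T^+_t\circ T^-_t\circ T^+_t\phi\geqslant T^+_t\phi$, while the dual inequality with $\psi=T^+_t\phi$ gives $T^+_t\circ T^-_t\circ T^+_t\phi=T^+_t\circ T^-_t\psi\leqslant\psi=T^+_t\phi$; hence equality. Symmetrically, for $\phi$ lower semicontinuous, $T^-_t$ applied to $T^+_t\circ T^-_t\phi\leqslant\phi$ gives $T^-_t\circ T^+_t\circ T^-_t\phi\leqslant T^-_t\phi$, and (1a) with $\psi=T^-_t\phi$ gives the reverse inequality, yielding $T^-_t\circ T^+_t\circ T^-_t\phi=T^-_t\phi$. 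The semicontinuity hypotheses play only the ancillary role of guaranteeing finiteness of the iterated operators and the attainment of the infimum in (1c).
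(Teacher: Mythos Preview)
Your proof is correct and follows essentially the same approach as the paper: the same direct $\inf$--$\sup$ argument for (1a), the same semigroup decomposition for the monotonicity (1b), the same two-step argument (existence of a minimizing $y_0$, then a Tonelli minimizer joining $y_0$ to $x$) for the forward direction of (1c), and the same sandwich using (1) and (2) for (3). The only difference is that you spell out more carefully why the infimum in $T^-_t(T^+_t\phi)(x)$ is attained (lower semicontinuity of $T^+_t\phi$ as a supremum of continuous functions), a point the paper leaves implicit.
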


\begin{proof}
(1) Let $\phi:M\to\R$ be upper semicontinuous. For any $x\in M$ and $t\geqslant0$ we have that
\begin{align*}
	T^-_t\circ T^+_t\phi(x)=&\,\inf_{y\in M}\{T^+_t\phi(y)+A_t(y,x)\}\\
	=&\,\inf_{y\in M}\{\sup_{z\in M}\{\phi(z)-A_t(y,z)\}+A_t(y,x)\}\\
	\geqslant&\,\phi(x)
\end{align*}
by taking $z=x$. Thus, if $t_2\geqslant t_1>0$, then
\begin{align*}
	T^-_{t_2}\circ T^+_{t_2}\phi=T^-_{t_1}\circ T^-_{t_2-t_1}\circ T^+_{t_2-t_1} \circ T^+_{t_1}\phi\geqslant T^-_{t_1}\circ T^+_{t_1}\phi.
\end{align*}
Now, let $t>0$ and let $x\in M$ be such that $T^-_t\circ T^+_t\phi(x)=\phi(x)$. Then there exists a $C^1$ curve $\gamma:[0,t]\to M$, $\gamma(t)=x$ such that 
\begin{align*}
	\phi(x)=T^-_t\circ T^+_t\phi(x)= T^+_t\phi(\gamma(0))+\int^t_0L(\gamma,\dot{\gamma})\ ds.
\end{align*}
Thus, \eqref{eq:T^-T^+1} holds. Conversely, if there exists a $C^1$ curve $\gamma:[0,t]\to M$, with $\gamma(t)=x$, which satisfies \eqref{eq:T^-T^+1}, then
\begin{align*}
	\phi(x)=T^+_t\phi(\gamma(0))+\int^t_0L(\gamma,\dot{\gamma})\ ds\geqslant T^-_t\circ T^+_t\phi(x).
\end{align*}
The inequality above is an equality since $T^-_t\circ T^+_t\phi\geqslant\phi$. This completes the proof of (1). The proof of (2) is similar and will be omitted.

 (3) Suppose $\phi:M\to\R$ is upper semicontinuous and $t>0$. By (1), we have that
\begin{align*}
	T^+_t\circ T^-_t\circ T^+_t\phi=T^+_t\circ(T^-_t\circ T^+_t)\phi\geqslant T^+_t\phi.
\end{align*}
On the other hand, property (2) implies that
\begin{align*}
	T^+_t\circ T^-_t\circ T^+_t\phi=(T^+_t\circ T^-_t)\circ T^+_t\phi\leqslant T_t^+\phi.
\end{align*}
Thus, there holds $T^+_t\circ T^-_t\circ T^+_t\phi=T^+_t\phi$. Similarly, if $\phi:M\to\R$ is lower semicontinuous and $t>0$, then $T^-_t\circ T^+_t\circ T^-_t\phi=T^-_t\phi$.
\end{proof}

Before proceeding further, we recall a result from \cite{Arnaud2011} which describes the evolution of the 1-graph 
\begin{align*}
	\text{\rm graph}\,(D^+\phi)=\{(x,p): x\in M, p\in D^+\phi(x)\subset T^*M\}
\end{align*}
under the Hamiltonian flow $\{\Phi_H^t\}$ for short time  (see (3) of the following proposition). 

\begin{Pro}\label{pro:T^-T^+1}
Let $\phi\in\text{\rm SCL}\,(M)$ and let $t_0>0$ be such that $T^+_{t_0}\phi\in C^{1,1}(M)$. Then the following holds true.
\begin{enumerate}[\rm (1)]
	\item $T^+_t\phi\in C^{1,1}(M)$ for all $t\in(0,t_0]$ and $T^-_{t}\circ T^+_{t}\phi=\phi$ for all $t\in[0,t_0]$.
	\item $T^+_t\phi=T^-_{t_0-t}\circ T^+_{t_0}\phi$ for all $t\in[0,t_0]$.
	\item {\rm (Arnaud)} We have that
	\begin{equation}\label{eq:graph_evo}
		\text{\rm graph}\,(DT^+_t\phi)=\Phi_H^{-t}(\text{\rm graph}\,(D^+\phi)),\qquad\forall t\in(0,t_0].
	\end{equation}
	\item Let $u(t,x)=T^+_t\phi(x)$ for $(t,x)\in[0,t_0]\times M$. Then $u$ is of class $C^{1,1}_{\rm loc}$ on $(0,t_0)\times M$  and it is a viscosity solution of the Hamilton-Jacobi equation
	\begin{equation}\label{eq:HJ+}\tag{HJ$_e^+$}
		\begin{cases}
			D_tu-H(x,D_xu)=0,\qquad(t,x)\in(0,t_0)\times M;\\
			u(0,x)=\phi(x),\qquad x\in M.
		\end{cases}
	\end{equation}
\end{enumerate} 
\end{Pro}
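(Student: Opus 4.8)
The plan is to establish the four items in sequence, each building on the previous one, with Proposition~\ref{pro:LL} (Bernard's regularization) and Lemma~\ref{lem:1} as the main tools, plus Proposition~\ref{pro:cave geq vex} for the Lipschitz regularity of gradients.

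\textbf{Step (1).} First I would show $T^+_t\phi \in C^{1,1}(M)$ for $t\in(0,t_0]$. The key is the semigroup identity $T^+_{t_0}\phi = T^+_{t_0-t}\circ T^+_t\phi$. Since $T^+_t\phi$ is semiconvex for $t>0$ (a standard property of the positive Lax-Oleinik operator applied to any function on a compact manifold, because $-T^+_t\phi$ inherits semiconcavity from the action $A_t$), while $T^+_{t_0}\phi$ is $C^{1,1}$ hence both semiconvex and semiconcave, I can pinch $T^+_t\phi$: it is semiconvex, and $T^+_{t_0}\phi \leqslant T^+_t\phi$ wherever \dots actually the cleaner route is to use the time reversal/duality between $T^+$ and $T^-$. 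I would instead argue: $T^+_t\phi$ is semiconvex; applying $T^+_{t_0-t}$ to the semiconvex function $T^+_t\phi$ yields the $C^{1,1}$ function $T^+_{t_0}\phi$, and since $T^+_s$ of a semiconvex function that is \emph{not} already regular would strictly ``improve'' it only in one direction, one gets that $T^+_t\phi$ must itself be semiconcave (one can see this by noting $T^-_{t_0-t}\circ T^+_{t_0}\phi$ is semiconcave and then identifying it with $T^+_t\phi$ via part (2), so parts (1) and (2) are naturally proved together). Concretely: set $\psi_t := T^-_{t_0-t}\circ T^+_{t_0}\phi$; this is semiconcave with linear modulus since $T^-_s$ preserves/creates semiconcavity. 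By Lemma~\ref{lem:1}(2) applied to the lower semicontinuous (indeed continuous) function $T^+_{t_0}\phi$ — noting that $T^+_{t_0}\phi$ being $C^{1,1}$ satisfies $T^+_s\circ T^-_s (T^+_{t_0}\phi) = T^+_{t_0}\phi$ for small $s$ by Proposition~\ref{pro:T^-T^+1}(1) applied in reversed time, or directly because along the calibrated curves the cost is attained — one deduces $T^+_t\psi_t = T^+_{t_0}\phi$, hence $\psi_t = T^+_t\phi$ would follow if $T^+_t$ were injective on the relevant class; alternatively use Lemma~\ref{lem:1}(3): $T^+_t\circ T^-_t\circ T^+_t\phi = T^+_t\phi$ with $t$ replaced suitably. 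I expect the cleanest presentation chains $T^+_t\phi = T^+_t\circ(T^-_{t_0-t+ t}\!\!\dots)$ — this bookkeeping is the first place care is needed.

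\textbf{Step (2) and the rest.} Once $T^+_t\phi = T^-_{t_0-t}\circ T^+_{t_0}\phi$ is established, part (1)'s claim $T^-_t\circ T^+_t\phi = \phi$ follows by writing $T^-_t\circ T^+_t\phi = T^-_t\circ T^-_{t_0-t}\circ T^+_{t_0}\phi = T^-_{t_0}\circ T^+_{t_0}\phi$ and then checking $T^-_{t_0}\circ T^+_{t_0}\phi = \phi$; the latter is the base case, which holds because $T^+_{t_0}\phi\in C^{1,1}$ is differentiable everywhere, so every point lies on a backward calibrated curve for $T^+_{t_0}\phi$, i.e., \eqref{eq:T^-T^+1} holds, and Lemma~\ref{lem:1}(1) gives equality; monotonicity in Lemma~\ref{lem:1}(1) then propagates it down to all $t\in[0,t_0]$. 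For part (3), Arnaud's graph formula, I would use that on $(0,t_0]$ the function $T^+_t\phi$ is $C^1$ with $\text{\rm graph}\,(DT^+_t\phi)$ a Lipschitz Lagrangian graph; the inclusion $\Phi_H^{-t}(\text{\rm graph}(D^+\phi)) \subseteq \text{\rm graph}(DT^+_t\phi)$ comes from the characterization of $D^+(T^+_t\phi)$ via minimizing/maximizing curves and the fact that the Hamiltonian flow transports $(x,p)$ with $p\in D^*\phi(x)$ forward to points of differentiability; the reverse inclusion follows by a dimension/properness count or by running the identity $T^-_t\circ T^+_t\phi=\phi$ backwards to recover $D^+\phi$ from $DT^+_t\phi$. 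Part (4) is then essentially a wrap-up: $u(t,x)=T^+_t\phi(x)$ solves \eqref{eq:HJ+} in the viscosity sense by the standard dynamic programming principle for $T^+_t$, and the $C^{1,1}_{\rm loc}$ regularity on $(0,t_0)\times M$ follows from the spatial $C^{1,1}$ bound (uniform on compact $t$-subintervals, using Proposition~\ref{pro:cave geq vex} to get a uniform Lipschitz constant $4C$ for the gradient) together with the equation itself to control $D_t u$.

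\textbf{Main obstacle.} The crux is Step (1)--(2): proving that $T^+_t\phi$ is genuinely $C^{1,1}$ (not merely semiconvex) for \emph{all} $t\in(0,t_0]$ and identifying it with $T^-_{t_0-t}\circ T^+_{t_0}\phi$. The natural mechanism is that $T^+_t\phi$ is semiconvex, $\psi_t:=T^-_{t_0-t}\circ T^+_{t_0}\phi$ is semiconcave, $\psi_t \leqslant T^+_t\phi$ (since $T^-_s\circ T^+_s \geqslant \mathrm{Id}$... wait, $T^-_s\circ T^+_s(T^+_t\phi)\geqslant T^+_t\phi$ would give the wrong direction — one actually uses $T^-_{t_0-t}\circ T^+_{t_0}\phi = T^-_{t_0-t}\circ T^+_{t_0-t}\circ T^+_t\phi \geqslant T^+_t\phi$ by Lemma~\ref{lem:1}(1), so $\psi_t \geqslant T^+_t\phi$), and then one needs the \emph{opposite} inequality to force equality and hence $C^{1,1}$ regularity via Proposition~\ref{pro:cave geq vex}. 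Getting that opposite inequality — equivalently, showing the backward evolution of the already-regular $T^+_{t_0}\phi$ returns exactly to $T^+_t\phi$ — is where the real work lies, and I anticipate it requires the curve-level argument: for each $x$ take $p=DT^+_{t_0}\phi(x')$ along the optimal trajectory, flow back by $t_0-t$, and verify the value matches $T^+_t\phi$ by calibration, using uniqueness of minimizers guaranteed by the $C^{1,1}$ regularity.
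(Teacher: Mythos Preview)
Your overall plan contains the right ingredients, but the organization inverts the natural order and leaves the central step as a gap. The paper proceeds in the opposite direction from you: it first takes the $C^{1,1}$ regularity of $T^+_t\phi$ for all $t\in(0,t_0]$ as known from \cite{Bernard2007} (not derived here), and then proves $T^-_t\circ T^+_t\phi=\phi$ directly by a curve-matching argument; part (2) is then a one-line consequence, not a stepping stone.

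The gap in your argument is precisely the ``base case'' $T^-_{t_0}\circ T^+_{t_0}\phi=\phi$. You assert it ``holds because $T^+_{t_0}\phi\in C^{1,1}$ is differentiable everywhere, so every point lies on a backward calibrated curve for $T^+_{t_0}\phi$.'' But differentiability of $T^+_{t_0}\phi$ at $y$ only gives you a unique maximizer $z=z(y)$ with $T^+_{t_0}\phi(y)=\phi(z)-A_{t_0}(y,z)$; it does \emph{not} tell you that an arbitrary $x\in M$ arises as such a $z$, which is what \eqref{eq:T^-T^+1} requires. The paper closes exactly this gap: fix $x$, let $y$ be a minimizer for $T^-_{t_0}\circ T^+_{t_0}\phi(x)=T^+_{t_0}\phi(y)+A_{t_0}(y,x)$; the semiconcave function $T^+_{t_0}\phi(\cdot)+A_{t_0}(\cdot,x)$ attains its minimum at $y$ and is therefore differentiable there, so by Fermat's rule $DT^+_{t_0}\phi(y)=-D_yA_{t_0}(y,x)=L_v(\xi(0),\dot\xi(0))$ for the minimal curve $\xi\in\Gamma^{t_0}_{y,x}$. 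On the other hand the unique maximizer $z$ for $T^+_{t_0}\phi(y)$ gives $DT^+_{t_0}\phi(y)=L_v(\eta(0),\dot\eta(0))$ for the minimal curve $\eta\in\Gamma^{t_0}_{y,z}$. Same initial data, so $\xi\equiv\eta$ by uniqueness for Euler--Lagrange, hence $z=x$ and $T^-_{t_0}\circ T^+_{t_0}\phi(x)=\phi(x)$. This is the ``curve-level argument'' you gesture toward at the very end, but it belongs at the base case, not at your ``main obstacle''; once it is in place, monotonicity from Lemma~\ref{lem:1}(1) gives $T^-_t\circ T^+_t\phi=\phi$ for all $t\in[0,t_0]$, and part (2) is then the single line $T^-_{t_0-t}\circ T^+_{t_0}\phi = T^-_{t_0-t}\circ T^+_{t_0-t}\circ T^+_t\phi = T^+_t\phi$. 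Your struggle with the inequality $\psi_t\leqslant T^+_t\phi$ is a symptom of having reversed the logical order.
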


\begin{proof}
Suppose $\phi\in\text{\rm SCL}\,(M)$, $t_0>0$ and $T^+_{t_0}\phi\in C^{1,1}(M)$. The fact that $T^+_{t}\phi\in C^{1,1}(M)$ for all $t\in(0,t_0]$ is known (\cite{Bernard2007}). Fix $x\in M$ and $t\in(0,t_0]$. There exists $y\in M$ such that
\begin{align*}
	T^-_{t}\circ T^+_{t}\phi(x)=T^+_{t}\phi(y)+A_{t}(y,x).
\end{align*}
Being semiconcave and attaining its minimum at $y$, $T_{t}^{+}\phi(\cdot)+A_{t}(\cdot,x)$ must be differentiable at $y$. Since $A_{t}(\cdot,x)$ is itself differentiable by Proposition \ref{pro:regularity}, $T_{t}^{+}\phi$ must be differentiable at $y$ as well. By Fermat's rule we have 
\begin{align*}
	L_v(\xi(0),\dot{\xi}(0))=-D_yA_{t}(y,x)=DT^+_{t}\phi(y),
\end{align*}
where $\xi\in\Gamma^{t}_{y,x}$ is the unique minimal curve for $A_{t}(y,x)$. On the other hand, there exists a unique $z\in M$ such that $T^+_{t}\phi(y)=\phi(z)-A_{t}(y,z)$ since $T^+_{t}\phi\in C^{1,1}(M)$. Then, 
\begin{align*}
	DT^+_{t}\phi(y)=L_v(\eta(0),\dot{\eta}(0))
\end{align*}
where $\eta\in\Gamma^{t}_{y,z}$ is the unique minimal curve for $A_{t}(y,z)$. Thus, $\xi(0)=\eta(0)=y$ and $L_v(\eta(0),\dot{\eta}(0))=L_v(\xi(0),\dot{\xi}(0))$. Since $\xi$ and $\eta$ satisfy the Euler-Lagrange equation with the same initial conditions, we conclude that $\xi\equiv\eta$, and so that $z=x$. Therefore,
\begin{align*}
	T^-_{t}\circ T^+_{t}\phi(x)=\phi(z)-A_{t}(y,z)+A_{t}(y,x)=\phi(x).
\end{align*}
and (1) follows.

By (1) we conclude that 
\begin{equation}\label{eq:T^-T^+2}
	T^-_{t_0-t}\circ T^+_{t_0}\phi=T^-_{t_0-t}\circ T^+_{t_0-t}\circ T^+_{t}\phi=T^+_{t}\phi,\qquad t\in(0,t_0].
\end{equation}
The proof is completed noting that Statement (3) is known (see \cite{Arnaud2011}) and (4) is obvious. 
\end{proof}

For any $\phi\in\text{\rm SCL}\,(M)$, define two functions $\tau_1,\tau_2:\text{\rm SCL}\,(M)\to[0,\infty]$
\begin{equation}\label{eq:def of tau}
    \tau_1(\phi):=\,\sup\{t>0:  T^+_t\phi\in C^{1,1}(M)\},\qquad \tau_2(\phi):=\,\sup\{t\geqslant0: T^-_t\circ T^+_t\phi=\phi\}.
\end{equation}
Let $\pi_x:T^*M\to M$ be the canonical projection onto $M$. 

\begin{Cor}\label{cor:tau12}
Suppose $\phi\in\text{\rm SCL}\,(M)$.
\begin{enumerate}[\rm (1)]
	\item $\tau_2(\phi)\geqslant\tau_1(\phi)>0$.
	\item We have
	\begin{align*}
		\{t>0:  T^+_t\phi\in C^{1,1}(M)\}=&\,(0,\tau_1(\phi)),\\
		\{t\geqslant0: T^-_t\circ T^+_t\phi=\phi\}=&\,
		\begin{cases}[0,\tau_2(\phi)],&\tau_2(\phi)<\infty;\\
		    [0,\infty),&\tau_2(\phi)=\infty.
		\end{cases}
	\end{align*}
\end{enumerate}	
\end{Cor}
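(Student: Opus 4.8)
The plan is to deduce everything from Bernard's regularization result (Proposition \ref{pro:LL}), from Proposition \ref{pro:T^-T^+1}, and from the monotonicity statement in Lemma \ref{lem:1}(1). First, Proposition \ref{pro:LL} guarantees a $t_0>0$ with $T^+_{t_0}\phi\in C^{1,1}(M)$, and then Proposition \ref{pro:T^-T^+1}(1) gives that $T^+_t\phi\in C^{1,1}(M)$ for every $t\in(0,t_0]$ and that $T^-_t\circ T^+_t\phi=\phi$ for every $t\in[0,t_0]$. Since $t_0$ is a valid choice in the defining supremum for $\tau_1(\phi)$, we get $\tau_1(\phi)\geqslant t_0>0$; since the same $t_0$ makes $T^-_t\circ T^+_t\phi=\phi$ hold on $[0,t_0]$, we get $\tau_2(\phi)\geqslant t_0>0$. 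This already proves the positivity in (1); the inequality $\tau_2(\phi)\geqslant\tau_1(\phi)$ will follow once we know the two level sets are intervals of the asserted form, so I would prove (2) first and then read (1) off it.

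For (2), the key is to show that both sets are ``downward closed'': if the property holds at some time $t$, it holds at every smaller positive time. For the $C^{1,1}$-regularity set this is exactly the first assertion of Proposition \ref{pro:T^-T^+1}(1), applied with $t_0$ replaced by any $t$ at which $T^+_t\phi\in C^{1,1}(M)$: it yields $T^+_s\phi\in C^{1,1}(M)$ for all $s\in(0,t]$. Hence $\{t>0:T^+_t\phi\in C^{1,1}(M)\}$ is an interval with left endpoint $0$; by definition its supremum is $\tau_1(\phi)$, and it does not contain its right endpoint unless... — here one must be a little careful, so I would argue that the set is $(0,\tau_1(\phi))$ by noting that $C^{1,1}$-regularity at $t=\tau_1(\phi)$ would, again by Proposition \ref{pro:T^-T^+1}(1), force regularity on a slightly larger interval (using that $T^+_{\tau_1}\phi$ being $C^{1,1}$ lets us iterate: $T^+_{\tau_1+\varepsilon}\phi=T^+_\varepsilon\circ T^+_{\tau_1}\phi$, and applying Bernard's theorem to the $C^{1,1}$, hence $\text{SCL}$, function $T^+_{\tau_1}\phi$ regularizes again), contradicting the definition of the supremum. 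For the commutator set, Lemma \ref{lem:1}(1) states directly that $t\mapsto T^-_t\circ T^+_t\phi$ is non-decreasing and always $\geqslant\phi$; combined with the fact (just established) that it equals $\phi$ for small $t>0$, this forces $\{t\geqslant0:T^-_t\circ T^+_t\phi=\phi\}$ to be an interval containing $0$. If $\tau_2(\phi)<\infty$, the set is closed at the right endpoint because $T^-_t\circ T^+_t\phi=\phi$ for all $t<\tau_2(\phi)$ and, letting $t\uparrow\tau_2(\phi)$, continuity of $t\mapsto T^-_t\circ T^+_t\phi(x)$ (semigroup/Lax-Oleinik continuity) gives equality at $\tau_2(\phi)$ as well; so the set is $[0,\tau_2(\phi)]$. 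If $\tau_2(\phi)=\infty$ the set is $[0,\infty)$ by definition of the supremum together with the interval property.

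Finally, (1): $\tau_1(\phi)>0$ and $\tau_2(\phi)>0$ were obtained above from $t_0$. For $\tau_2(\phi)\geqslant\tau_1(\phi)$, take any $t<\tau_1(\phi)$, i.e. $t\in(0,\tau_1(\phi))=\{s>0:T^+_s\phi\in C^{1,1}(M)\}$; then $T^+_t\phi\in C^{1,1}(M)$, so Proposition \ref{pro:T^-T^+1}(1) (with this $t$ playing the role of $t_0$) gives $T^-_t\circ T^+_t\phi=\phi$, whence $t$ belongs to the commutator set and $t\leqslant\tau_2(\phi)$. Letting $t\uparrow\tau_1(\phi)$ yields $\tau_2(\phi)\geqslant\tau_1(\phi)$. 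I expect the main obstacle to be the endpoint bookkeeping in (2) — in particular justifying that the regularity set is open on the right (the iteration of Bernard's theorem past $\tau_1(\phi)$) and that the commutator set is closed on the right when $\tau_2(\phi)<\infty$ (the limiting/continuity argument); the interior of both arguments is a direct appeal to Proposition \ref{pro:T^-T^+1} and Lemma \ref{lem:1}.
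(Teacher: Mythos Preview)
Your approach is correct and aligns with the paper's (very terse) proof, which simply cites Proposition~\ref{pro:T^-T^+1} for (1) and says (2) ``easily follows from the definition of $\tau_1(\phi)$ and $\tau_2(\phi)$''. You have in fact supplied more detail than the paper does---in particular the endpoint bookkeeping (openness of the regularity set via iterating Bernard's theorem past $\tau_1(\phi)$, and closedness of the commutator set via continuity of $t\mapsto T^-_t\circ T^+_t\phi$), which the paper leaves implicit.
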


\begin{proof}
(1) is a consequence of Proposition \ref{pro:T^-T^+1}, and (2) is easily follows from the definition of $\tau_1(\phi)$ and $\tau_2(\phi)$. 
\end{proof}

\begin{Cor}\label{cor:graph}
Suppose $\phi\in\text{\rm SCL}\,(M)$, $\tau_1(\phi)<\infty$. Then $T^+_{\tau_1(\phi)}\phi$ is semiconvex and the following properties hold.
\begin{enumerate}[\rm (1)]
	\item $T^+_t\phi=T^-_{\tau_1(\phi)-t}\circ T^+_{\tau_1(\phi)}\phi$ for all $t\in[0,\tau_1(\phi)]$, and $T^+_t\phi\in C^{1,1}(M)$ for all $t\in(0,\tau_1(\phi))$.
	\item $\text{\rm graph}\,(DT^+_t\phi)=\Phi_H^{-t}(\text{\rm graph}\,(D^+\phi))=\Phi_H^{\tau_1(\phi)-t}(\text{\rm graph}\,(D^- T^+_{\tau_1(\phi)}\phi))$ for all $t\in(0,\tau_1(\phi))$.
	\item Let $u(t,x)=T^+_t\phi(x)$ for $(t,x)\in[0,\tau_1(\phi)]\times M$. Then $u$ is of class $C^{1,1}_{\rm loc}$ on $(0,\tau_1(\phi))\times M$  and it is a viscosity solution of the Hamilton-Jacobi equation
	\begin{align*}
		\begin{cases}
			D_tu-H(x,D_xu)=0,\qquad(t,x)\in(0,\tau_1(\phi))\times M;\\
			u(0,x)=\phi(x),\qquad x\in M.
		\end{cases}
	\end{align*}
\end{enumerate}
\end{Cor}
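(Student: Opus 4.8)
The plan is to deduce Corollary \ref{cor:graph} from Proposition \ref{pro:T^-T^+1} by a limiting/continuity argument at the endpoint $t=\tau_1(\phi)$, together with the semigroup structure of the Lax-Oleinik operators. Throughout write $\tau_1=\tau_1(\phi)$ for brevity. By Corollary \ref{cor:tau12}(2), $T^+_t\phi\in C^{1,1}(M)$ precisely for $t\in(0,\tau_1)$, so Proposition \ref{pro:T^-T^+1} applies on every interval $(0,t_0]$ with $t_0<\tau_1$; the task is to push the conclusions to $t_0=\tau_1$ itself. First I would establish that $T^+_{\tau_1}\phi$ is semiconvex. For this, fix any $s\in(0,\tau_1)$: then $T^+_s\phi\in C^{1,1}(M)$, hence in particular is semiconvex, and $T^+_{\tau_1}\phi=T^+_{\tau_1-s}\circ T^+_s\phi=T^+_{\tau_1-s}(T^+_s\phi)$. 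One then invokes the standard fact (from the calculus of variations / weak KAM theory, e.g.\ the regularizing properties recorded in Section 2 and the references therein) that $T^+_r$ maps semiconvex functions — or even merely continuous functions — to semiconvex functions for $r>0$; applying this with $r=\tau_1-s>0$ gives semiconvexity of $T^+_{\tau_1}\phi$.

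Next I would prove the identity $T^+_t\phi=T^-_{\tau_1-t}\circ T^+_{\tau_1}\phi$ for all $t\in[0,\tau_1]$. For $t\in(0,\tau_1)$ this follows by the same computation as \eqref{eq:T^-T^+2}: writing $T^+_{\tau_1}\phi=T^+_{\tau_1-t}\circ T^+_t\phi$ and using Lemma \ref{lem:1} together with the fact that $T^-_{\tau_1-t}\circ T^+_{\tau_1-t}\circ T^+_t\phi=T^+_t\phi$ — the latter being exactly statement (1) of Proposition \ref{pro:T^-T^+1} applied to the semiconcave function $T^+_t\phi$ on the interval $(0,\tau_1-t]$, legitimate because $T^+_t\phi\in\text{\rm SCL}\,(M)$ and $T^+_{\tau_1-t}\circ(T^+_t\phi)=T^+_{\tau_1}\phi\in C^{1,1}(M)$ only when $\tau_1-t$ does not exceed the corresponding $\tau_1$-threshold of $T^+_t\phi$; here one checks $\tau_1(T^+_t\phi)\geqslant\tau_1-t$ from Corollary \ref{cor:tau12} since $T^+_r(T^+_t\phi)=T^+_{t+r}\phi\in C^{1,1}(M)$ for $r<\tau_1-t$. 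For the endpoint cases $t=0$ and $t=\tau_1$ one passes to the limit: the map $t\mapsto T^+_t\phi$ is continuous (uniformly on $M$, by standard continuity of the Lax-Oleinik semigroup in $t$) and likewise $s\mapsto T^-_s\psi$ is continuous, so letting $t\downarrow 0$ and $t\uparrow\tau_1$ in the identity valid on the open interval yields it on the closed interval. The claim $T^+_t\phi\in C^{1,1}(M)$ for $t\in(0,\tau_1)$ is immediate from Corollary \ref{cor:tau12}(2).

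For part (2), the first equality $\text{\rm graph}\,(DT^+_t\phi)=\Phi_H^{-t}(\text{\rm graph}\,(D^+\phi))$ for $t\in(0,\tau_1)$ is precisely statement (3) of Proposition \ref{pro:T^-T^+1} applied on $(0,t_0]$ for any $t_0<\tau_1$ containing $t$. For the second equality, apply the same Arnaud-type statement to the semiconvex function $\psi:=T^+_{\tau_1}\phi$: running time \emph{backward}, the $1$-graph $\text{\rm graph}\,(D^-\psi)$ evolves under $\Phi_H^{s}$ for $s\in(0,\tau_1)$ into $\text{\rm graph}\,(D(T^-_s\psi))$, and by part (1) $T^-_s\psi=T^+_{\tau_1-s}\phi$; setting $s=\tau_1-t$ identifies this with $\text{\rm graph}\,(DT^+_t\phi)=\Phi_H^{\tau_1-t}(\text{\rm graph}\,(D^-T^+_{\tau_1}\phi))$. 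Concretely one uses the symmetry $A_t(x,y)\leftrightarrow A_t(y,x)$ under time reversal of the Lagrangian so that positive and negative Lax-Oleinik operators swap roles, reducing this to the case already handled. Finally, part (3) is local: on $(0,\tau_1)\times M$ the function $u(t,x)=T^+_t\phi(x)$ coincides, near any interior point, with the corresponding solution from Proposition \ref{pro:T^-T^+1}(4) on a slightly smaller time interval, hence is $C^{1,1}_{\rm loc}$ there and solves \eqref{eq:HJ+} in the viscosity (indeed classical) sense, with the initial condition $u(0,\cdot)=\phi$ following from continuity of $t\mapsto T^+_t\phi$ at $t=0$.

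I expect the main obstacle to be the bookkeeping around applying Proposition \ref{pro:T^-T^+1} to the shifted function $T^+_t\phi$ rather than to $\phi$ — specifically, verifying that its own regularity threshold $\tau_1(T^+_t\phi)$ is at least $\tau_1-t$, so that the hypothesis ``$T^+_{t_0}(T^+_t\phi)\in C^{1,1}(M)$'' of that proposition is met for $t_0$ up to (but excluding) $\tau_1-t$, and then handling the closed endpoint $t_0=\tau_1-t$ only in the limit. The semiconvexity of $T^+_{\tau_1}\phi$, by contrast, and the $C^{1,1}_{\rm loc}$ regularity away from $t=\tau_1$ are comparatively routine consequences of the $t<\tau_1$ theory plus continuity in $t$; one must only take care that nothing is asserted about $C^1$ regularity of $T^+_{\tau_1}\phi$ at the endpoint, where only semiconvexity survives.
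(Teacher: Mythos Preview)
Your proposal is correct and follows the same route the paper has in mind: the paper's own proof is the single sentence ``The proof follows from Proposition \ref{pro:T^-T^+1} and the definition of $\tau_1(\phi)$ and $\tau_2(\phi)$,'' and what you have written is precisely the natural way to unpack that sentence --- apply Proposition \ref{pro:T^-T^+1} on intervals $(0,t_0]$ with $t_0<\tau_1$ and pass to the limit $t_0\uparrow\tau_1$ using continuity of the Lax--Oleinik semigroups.

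One small simplification: your detour through the shifted function $T^+_t\phi$ and its threshold $\tau_1(T^+_t\phi)$ is unnecessary. For $t\in[0,\tau_1)$ you can apply Proposition \ref{pro:T^-T^+1}(2) directly to $\phi$ with any $t_0\in(t,\tau_1)$ (legitimate since $T^+_{t_0}\phi\in C^{1,1}(M)$ by Corollary \ref{cor:tau12}(2)), obtaining $T^+_t\phi=T^-_{t_0-t}\circ T^+_{t_0}\phi$, and then let $t_0\uparrow\tau_1$. This avoids the bookkeeping you flag as the ``main obstacle.'' Your treatment of the semiconvexity of $T^+_{\tau_1}\phi$, the dual Arnaud statement for part (2), and the locality argument for part (3) are all fine.
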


\begin{proof}
The proof follows from Proposition \ref{pro:T^-T^+1} and the definition of $\tau_1(\phi)$ and $\tau_2(\phi)$.
\end{proof}

The following proposition characterizes the condition $T^-_t\circ T^+_t\phi=\phi$ for general $t>0$. 

\begin{The}\label{pro:equiv_T^-T^+}
Suppose $\phi\in\text{\rm SCL}\,(M)$ and $t_0>0$. Then the following statements are equivalent.
\begin{enumerate}[\rm (1)]
	\item $T^-_{t_0}\circ T^+_{t_0}\phi=\phi$.
	\item There exists a lower semicontinuous function $\psi:M\to\R$ such that $\phi=T^-_{t_0}\psi$.
	\item For any $x\in M$ and $p\in D^*\phi(x)$, we have that 
	\begin{align*}
		T^+_{t_0}\phi(\gamma(-t_0))=\phi(x)-\int^0_{-t_0}L(\gamma,\dot{\gamma})\ ds.
	\end{align*}
	where $\gamma(s)=\pi_x\Phi_H^{s}(x,p)$, $s\in[-t_0,0]$.
\end{enumerate}
\end{The}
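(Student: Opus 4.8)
The plan is to establish the cycle of implications $(1)\Rightarrow(3)\Rightarrow(2)\Rightarrow(1)$, with the middle step being essentially the construction of the Kantorovich potential.

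First, for $(2)\Rightarrow(1)$: suppose $\phi=T^-_{t_0}\psi$ for some lower semicontinuous $\psi$. By part (3) of Lemma \ref{lem:1} (the second identity there, applied to the lower semicontinuous function $\psi$), we have $T^-_{t_0}\circ T^+_{t_0}\circ T^-_{t_0}\psi=T^-_{t_0}\psi$, i.e.\ $T^-_{t_0}\circ T^+_{t_0}\phi=\phi$. This is immediate once one notices that $\phi$ being of the form $T^-_{t_0}\psi$ is exactly the hypothesis needed.

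Next, $(1)\Rightarrow(3)$: assume $T^-_{t_0}\circ T^+_{t_0}\phi=\phi$. Fix $x\in M$ and $p\in D^*\phi(x)$; take $x_k\to x$ with $\phi$ differentiable at $x_k$ and $D\phi(x_k)\to p$. Since $T^-_{t_0}\circ T^+_{t_0}\phi\geqslant\phi$ everywhere with equality at every point (by hypothesis), in particular equality holds at each $x_k$, so by part (1) of Lemma \ref{lem:1} there is a $C^1$ curve $\gamma_k:[0,t_0]\to M$ with $\gamma_k(t_0)=x_k$ realizing \eqref{eq:T^-T^+1}; such a $\gamma_k$ is a minimizer and hence an extremal, and by the differentiability of $\phi$ at $x_k$ one has $D\phi(x_k)=L_v(\gamma_k(t_0),\dot\gamma_k(t_0))$, i.e.\ $(\gamma_k(t_0),\dot\gamma_k(t_0))$ corresponds under the Legendre transform to $(x_k,D\phi(x_k))$. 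Running the Hamiltonian flow backward, $\gamma_k(s)=\pi_x\Phi_H^{s-t_0}(x_k,D\phi(x_k))$. Now let $k\to\infty$: by continuity of the flow, $\gamma_k\to\gamma$ uniformly in $C^1$, where $\gamma(s)=\pi_x\Phi_H^{s-t_0}(x,p)$ (reparametrized to $[-t_0,0]$ ending at $x$), and passing to the limit in \eqref{eq:T^-T^+1} — using that $T^+_{t_0}\phi$ is continuous (it is semiconvex, cf.\ Corollary \ref{cor:graph}) and that the action integral depends continuously on the curve — yields exactly the identity in (3).

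Finally, the main work is $(3)\Rightarrow(2)$. The natural candidate is $\psi:=T^+_{t_0}\phi$, which is semiconvex (hence continuous, in particular lower semicontinuous). We must show $T^-_{t_0}\psi=\phi$. One inequality is free: $T^-_{t_0}\circ T^+_{t_0}\phi\geqslant\phi$ by Lemma \ref{lem:1}(1). For the reverse inequality $T^-_{t_0}\circ T^+_{t_0}\phi(x)\leqslant\phi(x)$, the point is to produce, for each $x$, a competitor curve. Here I would use that $D^*\phi(x)\neq\varnothing$ (true since $\phi$ is locally Lipschitz), pick $p\in D^*\phi(x)$, set $\gamma(s)=\pi_x\Phi_H^s(x,p)$ on $[-t_0,0]$, and observe that hypothesis (3) says precisely $T^+_{t_0}\phi(\gamma(-t_0))+\int_{-t_0}^0 L(\gamma,\dot\gamma)\,ds=\phi(x)$; since $T^-_{t_0}\circ T^+_{t_0}\phi(x)$ is an infimum over endpoints-and-curves, bounded above by this particular value, we get $T^-_{t_0}\circ T^+_{t_0}\phi(x)\leqslant\phi(x)$. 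Combined with the opposite inequality, $T^-_{t_0}(T^+_{t_0}\phi)=\phi$, which is (2) with $\psi=T^+_{t_0}\phi$.

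The step I expect to be the main obstacle is the limiting argument in $(1)\Rightarrow(3)$: one has to argue that the minimizing curves $\gamma_k$ for the points $x_k$ converge (up to subsequence) to a curve that is forced to be the specific Hamiltonian orbit through $(x,p)$, rather than to some other minimizer ending at $x$. This is where uniqueness of the backward characteristic — guaranteed because $(\gamma_k(t_0),\dot\gamma_k(t_0))$ is pinned down by the Legendre dual of $(x_k,D\phi(x_k))$ and the Hamiltonian flow is a diffeomorphism — does the job, together with a careful passage to the limit in the two terms of \eqref{eq:T^-T^+1}. A secondary technical point is that everything should be phrased intrinsically on $M$ (working in charts, or invoking the manifold versions of semiconcavity and of $D^*$ recalled in Section 2); this is routine but worth stating.
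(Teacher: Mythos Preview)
Your proof is correct and follows essentially the same logical structure as the paper's (the cycle versus the paper's two separate equivalences $(1)\Leftrightarrow(2)$ and $(1)\Leftrightarrow(3)$ is only cosmetic: your $(3)\Rightarrow(2)$ is really $(3)\Rightarrow(1)$ followed by the trivial $(1)\Rightarrow(2)$ with $\psi=T^+_{t_0}\phi$, exactly as in the paper).

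The only substantive difference is in $(1)\Rightarrow(3)$, which you flag as the main obstacle. The paper handles it in one line: since $\phi=T^-_{t_0}\circ T^+_{t_0}\phi$, any $p\in D^*\phi(x)$ is a reachable gradient of a \emph{value function} $T^-_{t_0}(T^+_{t_0}\phi)$, and one invokes directly the standard result (Theorem 6.4.9 in \cite{Cannarsa_Sinestrari_book}) that reachable gradients of a value function are terminal momenta of optimal arcs, so the backward Hamiltonian orbit through $(x,p)$ is a minimizer realizing the infimum. Your approximation argument---taking $x_k\to x$ with $D\phi(x_k)\to p$, identifying the unique minimizer at each $x_k$ via differentiability, then passing to the limit by continuity of the flow---is a correct, self-contained proof of exactly that result in this context. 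So what you expected to be the hard step is in fact a known lemma; your version is more hands-on, the paper's is shorter but relies on an external citation.
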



\begin{proof}
Taking $\psi=T^+_{t_0}\phi$, it follows that (1) implies (2). Now, suppose there exists a lower semicontinuous function $\psi:M\to\R$ such that $\phi=T^-_{t_0}\psi$. By Lemma \ref{lem:1} (3) we obtain
\begin{align*}
	T^-_{t_0}\circ T^+_{t_0}\phi=T^-_{t_0}\circ T^+_{t_0}\circ T^-_{t_0}\psi=T^-_{t_0}\psi=\phi.
\end{align*}
The proof of the  equivalence of (1) and (2) is thus complete.

Since $(3)\Rightarrow (1)$ is obvious in view of Lemma \ref{lem:1}, it is sufficient to prove that $(1)\Rightarrow (3)$. We suppose that $x\in M$, $p\in D^*\phi(x)$ and let $\gamma(s)=\pi_x\Phi_H^{s}(x,p)$, $s\in[-t_0,0]$. Since $T^-_{t_0}\circ T^+_{t_0}\phi=\phi$, we have that $p\in D^*T^-_{t_0}\circ T^+_{t_0}\phi(x)$. Thus, $\gamma$ is an extremal such that
\begin{align*}
	\phi(x)=T^-_{t_0}\circ T^+_{t_0}\phi(x)=T^+_{t_0}\phi(\gamma(-t_0))+\int^0_{-t_0}L(\gamma,\dot{\gamma})\ ds
\end{align*}
thanks to \cite[Theorem 6.4.9]{Cannarsa_Sinestrari_book}. This leads to our conclusion.
\end{proof}

\subsection{Long time behavior of the operators $T^-_t\circ T^+_t$.}

Recall that $\tau_1$ and $\tau_2$ are defined in \eqref{eq:def of tau}.
\begin{The}\label{pro:tau2_infty}
Suppose $\phi\in\text{\rm SCL}\,(M)$. 
\begin{enumerate}[\rm (1)]
	\item $\tau_2(\phi)=+\infty$ if and only if $\phi$ is a weak KAM solution of \eqref{eq:HJs}.
	\item $\tau_1(\phi)=+\infty$ if and only if $\phi\in C^{1,1}(M)$ and satisfies \eqref{eq:HJs}.
\end{enumerate} 
\end{The}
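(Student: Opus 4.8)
The plan is to read off both equivalences from Theorem~\ref{pro:equiv_T^-T^+}, Proposition~\ref{pro:T^-T^+1}, and the convergence of the Lax--Oleinik semigroups: for a Tonelli Hamiltonian on a compact manifold, $\lim_{t\to+\infty}T^\pm_tw$ exists in $C(M)$ for every $w\in C(M)$ (see \cite{Fathi_book}). I will repeatedly use that each $T^\pm_t$ is nonexpansive for $\|\cdot\|_{\infty}$, and that a function which is at the same time a negative and a positive weak KAM solution is both semiconcave and semiconvex with linear modulus, hence of class $C^{1,1}$.

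\emph{Part (1).} For the ``if'' direction, if $\phi$ is a weak KAM solution then $T^-_t\phi=\phi$ for all $t\geqslant0$, and since $\phi$ is in particular lower semicontinuous, the implication $(2)\Rightarrow(1)$ of Theorem~\ref{pro:equiv_T^-T^+} (with $\psi=\phi$) gives $T^-_t\circ T^+_t\phi=\phi$ for every $t>0$, i.e.\ $\tau_2(\phi)=+\infty$. For the converse I would assume $T^-_t\circ T^+_t\phi=\phi$ for all $t\geqslant0$ and set $\psi_{\infty}:=\lim_{t\to+\infty}T^+_t\phi\in C(M)$. Then $\|T^-_t(T^+_t\phi)-T^-_t\psi_{\infty}\|_{\infty}\leqslant\|T^+_t\phi-\psi_{\infty}\|_{\infty}\to0$, so $T^-_t\psi_{\infty}\to\phi$ uniformly as $t\to+\infty$; hence, for each $s\geqslant0$,
\[
T^-_s\phi=T^-_s\Bigl(\lim_{t\to+\infty}T^-_t\psi_{\infty}\Bigr)=\lim_{t\to+\infty}T^-_{s+t}\psi_{\infty}=\phi ,
\]
so $\phi$ is a weak KAM solution of \eqref{eq:HJs}.

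\emph{Part (2).} For the ``if'' direction, a $\phi\in C^{1,1}(M)$ satisfying \eqref{eq:HJs} is a $C^1$ classical solution, hence a positive weak KAM solution (its $1$-graph is invariant under $\{\Phi^t_H\}$ and the associated characteristics calibrate $\phi$), so $T^+_t\phi=\phi\in C^{1,1}(M)$ for all $t>0$ and $\tau_1(\phi)=+\infty$. For the converse I would argue as follows. If $\tau_1(\phi)=+\infty$, then $T^+_t\phi\in C^{1,1}(M)$ for all $t>0$ by Corollary~\ref{cor:tau12}, so $\tau_2(\phi)\geqslant\tau_1(\phi)=+\infty$, and Part (1) yields both that $\phi$ is a weak KAM solution and that $T^-_r\psi_{\infty}\to\phi$ uniformly, with $\psi_{\infty}=\lim_{t\to+\infty}T^+_t\phi$. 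Applying Proposition~\ref{pro:T^-T^+1}(2) with an arbitrary $t_0>0$ (admissible because $T^+_{t_0}\phi\in C^{1,1}(M)$) gives $T^+_t\phi=T^-_{t_0-t}\circ T^+_{t_0}\phi$ for $0\leqslant t\leqslant t_0$. Fixing $t$ and letting $t_0\to+\infty$, nonexpansiveness gives $\|T^-_{t_0-t}(T^+_{t_0}\phi)-T^-_{t_0-t}\psi_{\infty}\|_{\infty}\to0$, while $T^-_{t_0-t}\psi_{\infty}\to\phi$; hence $T^+_t\phi=\phi$ for every $t>0$. Then $\phi$ is at the same time a negative and a positive weak KAM solution, so $\phi\in C^{1,1}(M)$, and being a $C^1$ viscosity solution it satisfies \eqref{eq:HJs}.

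The crux is the converse of Part (1): the identity $T^-_t\circ T^+_t\phi=\phi$ carries no a priori monotonicity in $t$ for $T^+_t\phi$, so the asymptotics of $T^+_t\phi$ as $t\to+\infty$ cannot be read off from the earlier results and have to be supplied by the convergence of the Lax--Oleinik semigroup, together with the ``moving target'' estimate controlling $T^-_t(T^+_t\phi)$ by $T^-_t\psi_{\infty}$. Once Part (1) is in hand, Part (2) is a short consequence, the semigroup identity of Proposition~\ref{pro:T^-T^+1}(2) being the extra ingredient that lets the same limiting scheme run.
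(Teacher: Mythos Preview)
Your proof is correct, but the ``only if'' halves take a genuinely different route from the paper's.

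For Part~(1), the paper does not pass through Fathi's convergence theorem. Instead it invokes the dynamical characterization~(3) of Theorem~\ref{pro:equiv_T^-T^+}: for every $x\in M$ and $p\in D^*\phi(x)$, the backward orbit $\gamma(s)=\pi_x\Phi_H^s(x,p)$ is a minimizer of $A_t(\gamma(-t),x)$ for \emph{every} $t>0$. Then Carneiro's energy lemma (Lemma~\ref{lem:app_appr_2}) forces $H(x,p)=0$; since $\phi$ is semiconcave and $H$ is convex in $p$, this a.e.\ identity upgrades to $\phi$ being a viscosity solution. Your argument replaces this dynamical step by the soft limit $\psi_\infty=\lim_{t\to\infty}T^+_t\phi$ together with nonexpansiveness; the trade-off is that you import Fathi's convergence theorem, whereas the paper stays within the more elementary energy estimate but needs the reachable-gradient structure of Theorem~\ref{pro:equiv_T^-T^+}.

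For Part~(2), the paper again argues pointwise: using $\tau_1(\phi)=+\infty$ (hence Proposition~\ref{pro:T^-T^+1} for every $t$), it shows $H(x,p)=0$ for \emph{all} $p\in D^+\phi(x)$, and strict convexity of $H(x,\cdot)$ forces $D^+\phi(x)$ to be a singleton, so $\phi\in C^1$ and then $C^{1,1}$. Your approach instead bootstraps Part~(1), uses the identity $T^+_t\phi=T^-_{t_0-t}\circ T^+_{t_0}\phi$ from Proposition~\ref{pro:T^-T^+1}(2), and lets $t_0\to\infty$ to conclude $T^+_t\phi=\phi$, so $\phi$ is simultaneously a negative and a positive weak KAM solution, hence $C^{1,1}$. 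This is a nice alternative: it avoids the pointwise strict-convexity argument at the cost of invoking the semigroup convergence a second time.
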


\begin{proof}
Suppose $\phi$ is a weak KAM solution of \eqref{eq:HJs}. Then $T^-_t\phi=\phi$ for all $t\geqslant0$. By Proposition \ref{pro:equiv_T^-T^+} we have that $T^-_{t}\circ T^+_{t}\phi=\phi$ for all $t\geqslant0$. It follows that $\tau_2(\phi)=+\infty$. 

Conversely, suppose $\tau_2(\phi)=+\infty$.  For any $x\in M$, $p\in D^*\phi(x)$ let $\gamma(s)=\pi_x\Phi_H^{s}(x,p)$ with $s\in(-\infty,0]$. By Theorem \ref{pro:equiv_T^-T^+} we conclude that
\begin{align*}
	T^+_{t}\phi(\gamma(-t))=\phi(x)-\int^0_{-t}L(\gamma,\dot{\gamma})\ ds,\quad\forall t\geqslant0.
\end{align*}
Thus, $A_t(\gamma(-t),x)=\int^0_{-t}L(\gamma,\dot{\gamma})\ ds$ for all $t\geqslant0$. By Lemma \ref{lem:app_appr_2} we have that $H(x,p)=0$. This implies that $\phi$ satisfies \eqref{eq:HJs} almost everywhere. Since $H$ is convex with respect to the $p$-variable, then $\phi$ is a viscosity solution of \eqref{eq:HJs} (see \cite[Proposition 5.3.1]{Cannarsa_Sinestrari_book}). This completes the proof of (1).

Now, we turn to the proof of (2). If $\phi\in C^{1,1}(M)$ satisfies \eqref{eq:HJs}, then $T^+_t\phi=\phi\in C^{1,1}(M)$ for all $t>0$, which implies $\tau_1(\phi)=+\infty$. If $\tau_1(\phi)=+\infty$, then for any $x\in M$, $p\in D^+\phi(x)$ let $\gamma(s)=\pi_x\Phi_H^s(x,p)$, $s\in(-\infty,0]$. Theorem \ref{pro:equiv_T^-T^+} leads to
\begin{align*}
	T^+_{t}\phi(\gamma(-t))=\phi(x)-\int^0_{-t}L(\gamma,\dot{\gamma})\ ds,\quad\forall t\geqslant0.
\end{align*}
Thus $A_t(\gamma(-t),x)=\int^0_{-t}L(\gamma,\dot{\gamma})\ ds$ for all $t\geqslant0$. By Lemma \ref{lem:app_appr_2} we have that $H(x,p)=0$ for all $p\in D^{+}\phi(x)$. Since $H(x,\cdot)$ is strictly convex, we deduce that $D^+\phi(x)$ is a singleton, that is, $p=D\phi(x)$. Therefore, $\phi$ is a $C^{1}$ solution of \eqref{eq:HJs}. It is well known that any $C^{1}$ solution $\phi$ of \eqref{eq:HJs} is of class $C^{1,1}$. This leads to our conclusion.
\end{proof}

%

Still assuming $c[H]=0$ we recall that the Peierls barrier in classical weak KAM theory is defined by
\begin{equation}\label{eq:P B}
	h(x,y)=\liminf_{t\to+\infty}A_t(x,y),\qquad x,y\in M.
\end{equation}
For $\phi\in C(M)$, let
\begin{align*}
	S^-\phi=\lim_{t\to+\infty}T^{-}_{t}\phi,\quad
	S^+\phi=\lim_{t\to+\infty}T^{+}_{t}\phi.
\end{align*}
The liminf in \eqref{eq:P B} is indeed a limit and the limits in the definition of $S^{\pm}$ exist by a well known result of Fathi (see, \cite{Fathi_book,Fathi1998_1}).

\begin{Pro}\label{pro:lim_exchange}
Let $\phi\in C(M)$.
\begin{enumerate}[\rm (1)]
	\item We have the following relations:
	\begin{align*}
		S^-\phi(x)=\inf_{y\in M}\{\phi(y)+h(y,x)\},\qquad S^+\phi(x)=\sup_{y\in M}\{\phi(y)-h(x,y)\},\qquad x\in M.
	\end{align*}
	\item We also have
	\begin{align*}
		S^-S^+\phi=\lim_{t\to+\infty}T^{-}_{t}T^{+}_{t}\phi,\qquad S^+S^-\phi=\lim_{t\to+\infty}T^{+}_{t}T^{-}_{t}\phi.
	\end{align*}
	\item $S^-S^+\phi\geqslant\phi$. For any $x\in M$, $S^-S^+\phi(x)=\phi(x)$ if and only if there exists $y\in M$ such that
	\begin{align*}
		S^+\phi(y)=\phi(x)-h(y,x).
	\end{align*}
	\item $S^+S^-\phi\leqslant\phi$. For any $x\in M$, $S^+S^-\phi(x)=\phi(x)$ if and only if there exists $y\in M$ such that
	\begin{align*}
		S^-\phi(y)=\phi(x)+h(x,y).
	\end{align*}
\end{enumerate}
\end{Pro}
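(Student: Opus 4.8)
The strategy is to exploit the known result of Fathi that $T^{\pm}_t\phi$ converges uniformly to $S^{\pm}\phi$, together with the variational representation of $A_t$ and the definition \eqref{eq:P B} of the Peierls barrier, to pass to the limit $t\to+\infty$ in the four statements of Lemma \ref{lem:1} and Lemma \ref{lem:1}(3). I would organize the argument as follows.

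\emph{Step 1 (Part (1)).} For $S^-\phi$, start from $T^-_t\phi(x)=\inf_{y}\{\phi(y)+A_t(y,x)\}$. Since $\phi$ is continuous on the compact manifold $M$, it is bounded and uniformly continuous; one shows the family $\{A_t(\cdot,\cdot)\}_{t\geqslant 1}$ is equi-Lipschitz on $M\times M$ (standard Tonelli estimate, available from the preliminaries), so one may interchange the limit in $t$ with the infimum over $y$. Concretely: for fixed $x$, choose $y_t$ realizing the infimum; by compactness extract $y_{t_k}\to \bar y$; using $\liminf_t A_t(\bar y,x)=h(\bar y,x)$ (the liminf is a genuine limit along a suitable subsequence by Fathi's theorem, but for the inequality $\leqslant$ one only needs the liminf) one gets $S^-\phi(x)\geqslant \inf_y\{\phi(y)+h(y,x)\}$; for the reverse inequality, given $\varepsilon$ pick $y$ with $\phi(y)+h(y,x)\leqslant \inf+\varepsilon$ and a sequence $t_k\to\infty$ with $A_{t_k}(y,x)\to h(y,x)$, so $T^-_{t_k}\phi(x)\leqslant \phi(y)+A_{t_k}(y,x)$ passes to the limit. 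The statement for $S^+\phi$ is symmetric (replace $A_t(y,x)$ by $A_t(x,y)$ with the supremum).

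\emph{Step 2 (Part (2)).} Write $T^-_t\circ T^+_t\phi = T^-_t\psi_t$ with $\psi_t=T^+_t\phi$. Since $\psi_t\to S^+\phi$ uniformly and $T^-_t$ is nonexpansive in sup-norm, $\|T^-_t\psi_t - T^-_t S^+\phi\|_\infty\to 0$; and $T^-_t S^+\phi\to S^- S^+\phi$ by definition of $S^-$. Hence $\lim_t T^-_t T^+_t\phi = S^- S^+\phi$. The other identity is symmetric. (One should double-check that $S^+\phi\in C(M)$ so that $S^-$ applies to it — this is immediate from the equi-Lipschitz estimate, or from the fact that $S^+\phi$ is a positive weak KAM solution.)

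\emph{Step 3 (Parts (3) and (4)).} By Lemma \ref{lem:1}(1), $T^-_t\circ T^+_t\phi\geqslant\phi$ for every $t>0$; letting $t\to\infty$ and using Step 2 gives $S^-S^+\phi\geqslant\phi$. For the equality characterization: using Part (1) twice, $S^-S^+\phi(x)=\inf_{y}\{S^+\phi(y)+h(y,x)\}$. If this infimum equals $\phi(x)$, I would argue the infimum is attained (the map $y\mapsto S^+\phi(y)+h(y,x)$ is lower semicontinuous — indeed continuous — on the compact $M$, since $h(\cdot,x)$ is Lipschitz and $S^+\phi$ is continuous), giving a $y$ with $S^+\phi(y)+h(y,x)=\phi(x)$, i.e. $S^+\phi(y)=\phi(x)-h(y,x)$. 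Conversely if such a $y$ exists then $S^-S^+\phi(x)=\inf_z\{S^+\phi(z)+h(z,x)\}\leqslant S^+\phi(y)+h(y,x)=\phi(x)$, and combined with $S^-S^+\phi\geqslant\phi$ this forces equality. Part (4) is proved by the symmetric argument, starting from $T^+_t\circ T^-_t\phi\leqslant\phi$.

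\textbf{Main obstacle.} The only genuinely delicate point is the interchange of $\lim_{t\to+\infty}$ with $\inf_y$ in Step 1, which rests on the equi-Lipschitz (hence equi-continuity) property of $\{A_t\}_{t\geqslant1}$ on $M\times M$ and on the fact that the $\liminf$ defining $h$ can be realized along a subsequence simultaneously with the near-minimality of the chosen $y$. This is classical in weak KAM theory but deserves to be spelled out; everything else is a routine combination of the nonexpansiveness of $T^{\pm}_t$, the already-proved Lemma \ref{lem:1}, and the existence of the limits $S^{\pm}$ guaranteed by Fathi's theorem cited just above.
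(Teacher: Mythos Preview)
Your proposal is correct and follows essentially the same route as the paper. For Part~(2) your argument is identical to the paper's (nonexpansiveness of $T^-_t$ applied to $\psi_t=T^+_t\phi$), and for Parts~(3)--(4) you spell out what the paper merely says is ``similar to Lemma~\ref{lem:1}.'' The only stylistic difference is in Part~(1): the paper invokes Fleming's lemma to assert that $\{A_t\}_{t\geqslant1}$ is equi-Lipschitz and that the $\liminf$ defining $h$ is in fact a uniform limit, and then interchanges $\lim_t$ and $\inf_y$ in one line, whereas you carry out the interchange by hand via a compactness/subsequence argument. Both rest on the same ingredients and neither is more general; the paper's version is shorter because it packages the interchange into the single statement ``the convergence in \eqref{eq:P B} is uniform.''
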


\begin{proof}
We know from Fleming's lemma \cite[Theorem 4.4.3]{Fathi_book} that $A_t(\cdot,\cdot)$ is equi-Lipschitz for $t\geqslant 1$, and the convergence in \eqref{eq:P B} is uniform. Thus, for any $x\in M$, we have
\begin{align*}
S^-\phi(x)&=\lim_{t\to+\infty}T^{-}_{t}\phi(x)=\lim_{t\to+\infty}\inf_{y\in M}\{\phi(y)+A_t(y,x)\}\\
&=\inf_{y\in M}\lim_{t\to+\infty}\{\phi(y)+A_t(y,x)\}=\inf_{y\in M}\{\phi(y)+h(y,x)\}.
\end{align*}
The proof of the second equality in (1) is similar and this completes the proof of (1).

Notice that by the non-expansive property of Lax-Oleinik semigroups (\cite{Fathi_book}), for any $t>0$ there holds
\begin{align*}
	\|T^-_{t}S^+\phi-T^-_{t}T^+_{t}\phi\|_{\infty}&=\|T^-_{t}S^+\phi-T^-_{t}(T^+_{t}\phi)\|_{\infty}\\
	&\leqslant\|S^+\phi-(T^+_{t}\phi)\|_{\infty}\to 0,\quad \text{as}\ t\to +\infty.
\end{align*}
Therefore, we have $S^-S^+\phi=\lim_{t\to+\infty}T^-_{t}S^+\phi=\lim_{t\to+\infty}T^{-}_{t}T^{+}_{t}\phi$. The proof of the second equality of (2) is similar.

The proof of (3) and (4) is similar to that of Lemma \ref{lem:1} and will be omitted.
\end{proof}

\subsection{Cut locus}

Recall Mather's barrier function $B:M\to\R$ introduced in \cite{Mather1993},
\begin{align*}
	B(x):=u^-(x)-u^+(x),\qquad x\in M,
\end{align*}
where $(u^-,u^+)$ is a weak KAM pair. We introduce an analogue of the barrier function $B$. For any weak KAM solution $u$ of \eqref{eq:HJs}, let $B:[0,+\infty)\times M\to\R$,
\begin{align*}
	B(t,x):=u(x)-T^+_tu(x),\qquad (t,x)\in [0,+\infty)\times M.
\end{align*}
Due to Theorem \ref{pro:tau2_infty} we have
\begin{align*}
	B(t,x)=(T^-_t\circ T^+_t-T^+_t\circ T^-_t)u^-(x).
\end{align*}
We denote by $\CUT$ the \emph{cut locus} of $u$. That is, $x\in\CUT$ if any calibrated curve ending at $x$ can not be extended further as a calibrated curve. More precisely, let $\tau:M\to\R$ be the \emph{cut time function} of $u$, for any $x\in M$,
\begin{align*}
	\tau(x):=&\,\sup\{t\geqslant0: \exists\gamma\in C^1([0,t],M), \gamma(0)=x, u(\gamma(t))-u(x)=A_t(x,\gamma(t))\}.
\end{align*}
Then $\CUT=\{x\in M: \tau(x)=0\}$. Recall that
\begin{align*}
	\mathcal{I}(u)=\{x\in M:\text{there exists a } u \text{-calibrated curve } \gamma:[0,+\infty)\to M \text{ such that } \gamma(0)=x\}
\end{align*}
The following properties of {\color{red}the} cut locus and cut time function are essentially known.

\begin{Pro}[\cite{Cannarsa_Cheng_Fathi2017,Cannarsa_Cheng_Fathi2021}]\label{pro_cut_time}
Suppose $u$ is a weak KAM solution of \eqref{eq:HJs}.
\begin{enumerate}[\rm (1)]
	\item Given $t>0$ and $x\in M$, then $T^+_tu(x)=u(x)$ if and only if there exists a $u$-calibrated curve $\gamma:[0,t]\to M$ such that $\gamma(0)=x$. 
	\item $\tau(x)=\sup\{t\geqslant0: B(t,x)=0\}$ for all $x\in M$. 
	\item $\tau$ is upper-semicontinuous and $\CUT$ is a $G_\delta$-set.
	\item $\CUT=\{x:\tau(x)=0\}$ and $\mathcal{I}(u)=\{x:\tau(x)=+\infty\}$.
\end{enumerate}	
\end{Pro}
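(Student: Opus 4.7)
The plan is to prove the four items in order, exploiting the key inequality $T^+_tu\leqslant u$ (from Proposition \ref{pro:subsolution}, since $u$ is a critical subsolution) and the joint continuity of $(t,x)\mapsto B(t,x)=u(x)-T^+_tu(x)$.

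For (1), I would unpack the definition $T^+_tu(x)=\sup_{y\in M}\{u(y)-A_t(x,y)\}$. The inequality $T^+_tu(x)\leqslant u(x)$ comes from the subsolution property. If equality holds, then by upper semicontinuity of $u$ and compactness of $M$, the sup is attained at some $y\in M$ with $u(y)-u(x)=A_t(x,y)$; any minimizer $\gamma\in\Gamma^t_{x,y}$ is then $u$-calibrated by standard Tonelli theory. Conversely, a calibrated curve $\gamma:[0,t]\to M$ with $\gamma(0)=x$ yields $u(\gamma(t))-A_t(x,\gamma(t))=u(x)$, which forces $T^+_tu(x)\geqslant u(x)$, and equality follows.

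For (2), since $B(t,x)=u(x)-T^+_tu(x)\geqslant 0$, the condition $B(t,x)=0$ is exactly $T^+_tu(x)=u(x)$, which by (1) is equivalent to the existence of a $u$-calibrated curve of length $t$ starting at $x$. Comparing with the definition of $\tau$ gives the identity. For (3), I would first note that $(t,x)\mapsto T^+_tu(x)$ is continuous (by continuity of the semigroup in $t$ and equi-Lipschitz dependence in $x$), so $B$ is continuous. The restriction/truncation of a calibrated curve is still calibrated, so $\{t\geqslant 0:B(t,x)=0\}$ is either $[0,\tau(x)]$ or $[0,+\infty)$. When $\tau(x)<+\infty$, for any $\varepsilon>0$ we have $B(\tau(x)+\varepsilon,x)>0$; joint continuity of $B$ gives $B(\tau(x)+\varepsilon,y)>0$ in a neighborhood of $x$, hence $\tau(y)<\tau(x)+\varepsilon$ there (when $\tau(x)=+\infty$ upper semicontinuity is automatic). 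Finally, $\CUT=\{x:\tau(x)=0\}=\bigcap_{n\geqslant 1}\{x:B(1/n,x)>0\}$ is a countable intersection of open sets, i.e.\ $G_\delta$.

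For (4), the first equality is the definition of $\CUT$ already recorded in the excerpt. For $\mathcal{I}(u)=\{x:\tau(x)=+\infty\}$, one direction is immediate: if $x\in\mathcal{I}(u)$ there is a calibrated $\gamma:[0,+\infty)\to M$ starting at $x$, so its restrictions witness $\tau(x)=+\infty$. Conversely, if $\tau(x)=+\infty$, for each $n\in\N$ pick a calibrated curve $\gamma_n:[0,n]\to M$ with $\gamma_n(0)=x$; these are extremals with uniformly bounded energy, hence uniformly Lipschitz, so by an Arzel\`a--Ascoli diagonal argument a subsequence converges on compact subsets of $[0,+\infty)$ to a $u$-calibrated curve starting at $x$, giving $x\in\mathcal{I}(u)$. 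The main technical point to watch is the joint continuity of $B$ used in (3); everything else is either definitional or a standard compactness argument, which is why the proposition is essentially known from \cite{Cannarsa_Cheng_Fathi2017,Cannarsa_Cheng_Fathi2021}.
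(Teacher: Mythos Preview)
The paper does not give its own proof of this proposition: it is stated with the citations \cite{Cannarsa_Cheng_Fathi2017,Cannarsa_Cheng_Fathi2021} and the remark that these properties ``are essentially known'', and no argument follows. Your plan is correct and is essentially the standard route taken in those references: (1) via the subsolution inequality $T^+_tu\leqslant u$ and attainment of the supremum; (2) as a reformulation of (1); (3) via joint continuity of $B$ together with the monotonicity $t\mapsto T^+_tu(x)$ non-increasing (Proposition~\ref{pro:subsolution}(d)), which makes $\{t:B(t,x)=0\}$ an interval; and (4) via Arzel\`a--Ascoli on calibrated curves, whose velocities are uniformly bounded because the dual arcs lie on the zero energy level.

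Two small points worth tightening. In (3), when you write $\CUT=\bigcap_{n\geqslant1}\{B(1/n,\cdot)>0\}$ you are implicitly using that $t\mapsto B(t,x)$ is non-decreasing (so that $B(1/n,x)>0$ for all $n$ forces $\tau(x)=0$); it is worth stating this monotonicity explicitly. In (4), the Arzel\`a--Ascoli step needs the equi-Lipschitz bound on the $\gamma_n$; the cleanest justification is that each $\gamma_n$ is a minimizing extremal with $H(\gamma_n,L_v(\gamma_n,\dot\gamma_n))=0$, hence $\dot\gamma_n$ stays in a fixed compact set of $TM$. With these made explicit, your argument is complete.
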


Given a weak KAM pair $(u^-,u^+)$ of \eqref{eq:HJs}, we define
\begin{align*}
	u(t,x)=T^+_{-t}u^-(x),\qquad (t,x)\in(-\infty,0]\times M.
\end{align*}
It is clear that $u^+(x)\leqslant u(t,x)\leqslant u^-(x)$ for all $(t,x)\in(-\infty,0]\times M$. Define
\begin{align*}
	G^*(u^-):=&\,\{(x,p): x\in M, p\in D^*u^-(x)\subset T^*_xM\},\\
	G^{\#}(u^-):=&\,\{(x,p): x\in M, p\in D^+u^-(x)\setminus D^*u^-(x)\subset T^*_xM\},
\end{align*}
then $\text{graph}\,(D^+u^-)=G^*(u^-)\cup G^{\#}(u^-)$. Let 
\begin{align*}
	\mathscr{F}^*(u^-)=&\,\{\gamma:(-\infty,0]\to M: \gamma(s)=\pi_x\Phi_H^s(x,p), (x,p)\in G^*(u^-)\}\\
	\mathcal{A}^*(u^-)=&\,\{(t,x)\in(-\infty,0]\times M: \exists\gamma\in\mathscr{F}^*(u^-), \gamma(t)=x\}\\
	\mathcal{A}^{\#}(u^-)=&\,((-\infty,0]\times M)\setminus\mathcal{A}^*(u^-).
\end{align*}

\begin{Pro}\label{pro:level point}
\hfill
\begin{enumerate}[\rm (1)]
	\item For any $(t,x)\in\mathcal{A}^*(u^-)\setminus(\{0\}\times M)$ there exists a unique $\gamma\in\mathscr{F}^*(u^-)$ with $\gamma(t)=x$. Let $p=L_v(\gamma,\dot{\gamma})$, then we have
	\begin{align*}
		u(t,x)=u^-(x)=u^-(\gamma(0))-\int^0_tL(\gamma,\dot{\gamma})\ ds,\\
		D_xu(t,x)=Du^-(x)=p(t),\\
		D_tu(t,x)=-H(x,D_xu(t,x))=0.
	\end{align*}
    \item For any $(t,x)\in\mathcal{A}^{\#}(u^-)$, let $\gamma:[t,0]\to M$, $\gamma(t)=x$ be a maximizer for $u(t,x)$, that is,
    \begin{align*}
    	u(t,x)=u^-(\gamma(0))-\int_{t}^{0}L(\gamma,\dot{\gamma})\ ds,
    \end{align*}
    and let $p=L_v(\gamma,\dot{\gamma})$. Then $(\gamma(0),p(0))\in G^{\#}(u^-)$, $\gamma(0)\in\SING(u^-)$, $H(x,p(t))<0$, and
    \begin{align*}
    	u^+(x)<u(t,x)<u^-(x).
    \end{align*}
\end{enumerate}
\end{Pro}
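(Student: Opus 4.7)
Both parts rest on analyzing the maximizer $\gamma$ that represents $u(t,x)=T^+_{-t}u^-(x)=\sup_{y\in M}\{u^-(y)-A_{-t}(x,y)\}$, combined with the standard regularity theory of calibrated curves for weak KAM solutions.

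For part (1), I would fix $(t,x)\in\mathcal{A}^*(u^-)\setminus(\{0\}\times M)$ and a curve $\gamma\in\mathscr{F}^*(u^-)$ with $\gamma(t)=x$. The first step is that $\gamma$ is backward $u^-$-calibrated on $(-\infty,0]$: since $p(0)\in D^*u^-(\gamma(0))$ is the limit of gradients $Du^-(x_n)$ at differentiability points, the calibrated curves through $(x_n,Du^-(x_n))$ converge to $\gamma$ and transfer calibration to it. The resulting calibration identity together with the subsolution inequality $u^-(y)-u^-(x)\leqslant A_{-t}(x,y)$ pins down $u(t,x)=u^-(\gamma(0))-A_{-t}(x,\gamma(0))=u^-(x)$, with $y=\gamma(0)$ realizing the supremum. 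For $t<0$, $u^-$ is differentiable along the interior of $\gamma$ with $Du^-(x)=L_v(x,\dot\gamma(t))=p(t)$; strict convexity of $L(x,\cdot)$ then recovers $\dot\gamma(t)$, and hence $\gamma$, uniquely from $Du^-(x)$. The gradient formulas $D_xu(t,x)=Du^-(x)=p(t)$ and $D_tu(t,x)=-H(x,p(t))=0$ follow from the Hamilton--Jacobi equation satisfied by $u$, conservation of $H$ along $\Phi^s_H$, and $H(\gamma(0),p(0))=0$ on $D^*u^-$.

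For part (2), a maximizer $\gamma$ exists by compactness of $M$ and continuity of $u^-$. Being also a minimizer of $A_{-t}(x,\gamma(0))$, $\gamma$ is a regular extremal, so $\gamma(s)=\pi_x\Phi^s_H(\gamma(0),p(0))$ with $p(0)=L_v(\gamma(0),\dot\gamma(0))$. The Fermat condition at the maximizer $y^\ast=\gamma(0)$ gives $p(0)\in D^+u^-(\gamma(0))$. If we had $p(0)\in D^*u^-(\gamma(0))$, then $(\gamma(0),p(0))\in G^*(u^-)$ and thus $\gamma\in\mathscr{F}^*(u^-)$, placing $(t,x)$ in $\mathcal{A}^*(u^-)$ and contradicting the hypothesis. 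Hence $p(0)\in G^{\#}(u^-)$; since $D^+u^-(\gamma(0))$ is not a singleton, $\gamma(0)\in\SING(u^-)$. The strict bound $H(x,p(t))<0$ then follows by writing $p(0)$ as a non-trivial convex combination of points of $D^*u^-(\gamma(0))$ (using $D^+u^-=\mathrm{co}\,D^*u^-$), on each of which $H$ vanishes, applying strict convexity of $H(\gamma(0),\cdot)$, and propagating via $H(\gamma(s),p(s))\equiv H(\gamma(0),p(0))$.

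The strict inequalities $u^+(x)<u(t,x)<u^-(x)$ are the most delicate part. The bound $u(t,x)\leqslant u^-(x)$ is the subsolution inequality; if equality held, $\gamma$ would be $u^-$-calibrated on $[t,0]$, and then
\[
p(0)=\lim_{s\to 0^-}L_v(\gamma(s),\dot\gamma(s))=\lim_{s\to 0^-}Du^-(\gamma(s))\in D^*u^-(\gamma(0)),
\]
contradicting $p(0)\in G^{\#}$. For the lower bound, monotonicity of $T^+_{-t}$ combined with $u^-\geqslant u^+$ and $T^+_{-t}u^+=u^+$ yields $u(t,x)\geqslant u^+(x)$; if equality held, the chain of subsolution inequalities for $u^+$ would collapse to force $u^+(\gamma(0))=u^-(\gamma(0))$, and then Proposition~\ref{pro:cave geq vex}, applied in a local chart to the semiconcave $u^-$ and the semiconvex $u^+$ at their coincidence point $\gamma(0)$, would force $u^-$ to be differentiable at $\gamma(0)$. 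This again places $p(0)\in D^*u^-(\gamma(0))$, contradicting $p(0)\in G^{\#}$. Tying together the calibration structure, the convex-hull description $D^+=\mathrm{co}\,D^*$, and the regularity transfer at coincidence points is what I view as the main technical obstacle.
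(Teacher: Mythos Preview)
Your treatment of part (1) and of the first half of part (2) --- the Fermat condition $p(0)\in D^+u^-(\gamma(0))$, the exclusion $p(0)\notin D^*u^-(\gamma(0))$, the strict inequality $H(x,p(t))<0$, and the bound $u(t,x)<u^-(x)$ --- matches the paper's argument essentially line for line.

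The gap is in your proof of $u^+(x)<u(t,x)$. From $u^+(x)=u(t,x)=u^-(\gamma(0))-\int_t^0 L(\gamma,\dot\gamma)\,ds$ together with the subsolution inequality $u^+(\gamma(0))-u^+(x)\leqslant\int_t^0 L(\gamma,\dot\gamma)\,ds$ you only obtain $u^+(\gamma(0))\leqslant u^-(\gamma(0))$, which is always true; the chain does \emph{not} collapse to an equality at the endpoint $\gamma(0)$ of the maximizer for $u(t,x)$. So Proposition~\ref{pro:cave geq vex} never gets triggered and the contradiction does not materialize. The paper avoids this by a different, space--time argument: it views $u^+$ as a function of $(t,x)$ that is constant in $t$, uses Lemma~\ref{lem1} at the contact point to place $(0,q)\in D^-u(t,x)=\mathrm{co}\,D^*u(t,x)$ for some $q\in D^-u^+(x)$, and then observes that every $(q',p')\in D^*u(t,x)$ comes from a maximizer and therefore satisfies $q'=-H(x,p')>0$; hence no convex combination can have vanishing time component.

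Your idea can be salvaged with a small twist: instead of $\gamma$, take a maximizer $\eta:[t,0]\to M$ for $u^+(x)=T^+_{-t}u^+(x)$. Then $u^-(\eta(0))-A_{-t}(x,\eta(0))\leqslant u(t,x)=u^+(x)=u^+(\eta(0))-A_{-t}(x,\eta(0))$ forces $u^-(\eta(0))=u^+(\eta(0))$, and simultaneously shows that $\eta$ is also a maximizer for $u(t,x)$. Now Proposition~\ref{pro:cave geq vex} applies at $\eta(0)$, giving $Du^-(\eta(0))=Du^+(\eta(0))$; the dual arc of $\eta$ then lands in $D^*u^-(\eta(0))$, so $(t,x)\in\mathcal{A}^*(u^-)$, a contradiction. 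This is a genuinely different route from the paper's subdifferential argument and stays closer to your original outline.
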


\begin{proof}
The uniqueness of $\gamma$ follows from the fact that calibrated curves cannot cross in the interior of the time interval. Now we have
\begin{align*}
	u^-(x)=u^-(\gamma(0))-\int_{t}^{0}L(\gamma,\dot{\gamma})\ ds\leqslant u(t,x)\leqslant u^-(x),
\end{align*}
which implies $u(t,x)=u^-(x)$. Notice that $u$ and $u^-$ are differentiable on the interior of the calibrated curve. Thus, $D_xu(t,x)=Du^-(x)=p(t)$. Finally, $(\gamma(0),p(0))\in G^{*}(u^-)$ implies
\begin{align*}
	D_tu(t,x)=-H(x,D_xu(t,x))=-H(\gamma(t),p(t))=-H(\gamma(0),p(0))=0.
\end{align*}

Now, we turn to prove (2). First, by calculus of variations we know that $p(0)\in D^{+}u^-(\gamma(0))$. The definition of $\mathcal{A}^{\#}(u^-)$ implies that $p(0)\notin D^{*}u^-(\gamma(0))$. It follows that $(\gamma(0),p(0))\in G^{\#}(u^-)$. Therefore, we have that $\gamma(0)\in\SING(u^-)$, and $H(x,p(t))=H(\gamma(0),p(0))<0$. It is well known that $u^+(x)\leqslant u(t,x)\leqslant u^-(x)$. If $u(t,x)=u^-(x)$, then we can choose $\gamma:[t,0]\to M$, $\gamma(t)=x$ to be a calibrated curve for $u(t,x)$. Thus,
\begin{align*}
	u^-(x)=u(t,x)=u^-(\gamma(0))-\int_{t}^{0}L(\gamma,\dot{\gamma})\ ds.
\end{align*}
So, $\gamma$ is in fact a calibrated curve for $u^-$, which implies $L_v(\gamma(0),\dot{\gamma}(0))\in D^*u^-(\gamma(0))$. As a result, $(t,x)\in\mathcal{A}^{*}(u^-)$. This leads to a contradiction. So we have $u(t,x)<u^-(x)$. If $u^+(x)=u(t,x)$, we know that $(0,D^{-}u^+(x))\in D^-u(t,x)=\mbox{\rm co}\,D^{*}u(t,x)$. Since $H(x,p(t))<0$ for all calibrated curves of $u(t,x)$, it is easy to see that
\begin{align*}
	q=-H(x,p)>0,\qquad \forall (q,p)\in D^{*}u(t,x).
\end{align*}
This leads to a contradiction with $(0,D^{-}u^+(x))\in\mbox{\rm co}\,D^{*}u(t,x)$. Therefore, $u^+(x)<u(t,x)$.
\end{proof}

\begin{Pro}\label{pro:level set}
\hfill
\begin{enumerate}[\rm (1)]
	\item For any $t\geqslant0$ and $x\in M$,
	\begin{align*}
		(-t,x)\in\mathcal{A}^*(u^-)\quad\Longleftrightarrow\quad T^+_tu^-(x)=u^-(x)\quad\Longleftrightarrow\quad \tau(x)\geqslant t.
	\end{align*}
	That is
	\begin{align*}
		\mathcal{A}^*(u^-)=&\,\{(t,x)\in(-\infty,0]\times M: \tau(x)\geqslant -t\}\\
		=&\,\{(t,x)\in(-\infty,0]\times M: u(t,x)=u^-(x)\}.
	\end{align*}
	Moreover, $Du^-=Du$ is locally Lipschitz on $\mathcal{A}^*(u^-)\setminus(\{0\}\times M)$.
	\item For any $t\geqslant0$ and $x\in M$,
	\begin{align*}
		(-t,x)\in\mathcal{A}^{\#}(u^-)\quad\Longleftrightarrow\quad T^+_tu^-(x)<u^-(x)\quad\Longleftrightarrow\quad \tau(x)<t.
	\end{align*}
	That is
	\begin{align*}
		\mathcal{A}^{\#}(u^-)=&\,\{(t,x)\in(-\infty,0]\times M: \tau(x)<-t\}\\
		=&\,\{(t,x)\in(-\infty,0]\times M: u(t,x)<u^-(x)\}.
	\end{align*}
\end{enumerate}
\end{Pro}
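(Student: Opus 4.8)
The plan is to read off the equivalences in (1) by combining the pointwise description of $u(t,\cdot)$ in Proposition \ref{pro:level point} with the cut-time characterization in Proposition \ref{pro_cut_time}, then to obtain (2) by negating the three conditions; the regularity assertion is handled separately via Proposition \ref{pro:cave geq vex}. First I would establish the middle equivalence $T^+_tu^-(x)=u^-(x)\Leftrightarrow\tau(x)\geqslant t$: since $u^-$ is a weak KAM, hence critical, sub-solution of \eqref{eq:HJs}, Proposition \ref{pro:subsolution} gives that $s\mapsto T^+_su^-(x)$ is non-increasing, so $B(s,x)=u^-(x)-T^+_su^-(x)$ is non-negative, non-decreasing and continuous in $s$ with $B(0,x)=0$; by Proposition \ref{pro_cut_time}(2) this forces $\{s\geqslant0:B(s,x)=0\}=[0,\tau(x)]$ (or $[0,+\infty)$), so $T^+_tu^-(x)=u^-(x)\Leftrightarrow B(t,x)=0\Leftrightarrow t\leqslant\tau(x)$.

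Next, using $u(-t,x)=T^+_tu^-(x)$, I would prove $(-t,x)\in\mathcal{A}^*(u^-)\Leftrightarrow u(-t,x)=u^-(x)$. For $t=0$ both sides hold trivially, since $D^*u^-(x)\neq\varnothing$ gives a curve of $\mathscr{F}^*(u^-)$ passing through $x$ at time $0$ and $u(0,x)=u^-(x)$. For $t>0$, Proposition \ref{pro:level point}(1) yields $u(-t,x)=u^-(x)$ when $(-t,x)\in\mathcal{A}^*(u^-)$, whereas Proposition \ref{pro:level point}(2) yields $u(-t,x)<u^-(x)$ when $(-t,x)\in\mathcal{A}^{\#}(u^-)=((-\infty,0]\times M)\setminus\mathcal{A}^*(u^-)$; as these alternatives are exhaustive and mutually exclusive, the equivalence follows. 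Chaining this with the previous step gives the full chain in (1) (the displayed set identities being a mere relabeling $-t\mapsto t$), and (2) then follows by negating each of the three conditions, using that $T^+_tu^-(x)\leqslant u^-(x)$ always.

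For the regularity claim I would argue slice by slice. Fix $\bar s>0$ and set $A_{\bar s}:=\{y\in M:T^+_{\bar s}u^-(y)=u^-(y)\}$. On $A_{\bar s}$ the semiconcave function $u^-$ coincides with $T^+_{\bar s}u^-$, which is semiconvex with linear modulus (a standard property of $T^+_{\bar s}$ for $\bar s>0$, since $A_{\bar s}(\cdot,y)$ is semiconcave with a constant uniform in $y$; see \cite{Cannarsa_Sinestrari_book}), and $u^-\geqslant T^+_{\bar s}u^-$ by the monotonicity above; hence, in local charts and with a common constant, Proposition \ref{pro:cave geq vex} shows $Du^-=D(T^+_{\bar s}u^-)$ exists and is Lipschitz on $A_{\bar s}$. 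By (1), any $(t,x)\in\mathcal{A}^*(u^-)$ with $t\leqslant-\bar s$ satisfies $\tau(x)\geqslant-t\geqslant\bar s$, so $x\in A_{\bar s}$; and Proposition \ref{pro:level point}(1) identifies $D_xu(t,x)=Du^-(x)$ on $\mathcal{A}^*(u^-)\setminus(\{0\}\times M)$. Thus $(t,x)\mapsto D_xu(t,x)=Du^-(x)$ is Lipschitz on $\mathcal{A}^*(u^-)\cap((-\infty,-\bar s]\times M)$, and since $\mathcal{A}^*(u^-)\setminus(\{0\}\times M)=\bigcup_{\bar s>0}\bigl(\mathcal{A}^*(u^-)\cap((-\infty,-\bar s]\times M)\bigr)$, it is locally Lipschitz there.

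The main obstacle is this last step: the equivalences are essentially bookkeeping on top of Propositions \ref{pro:level point} and \ref{pro_cut_time}, but the local Lipschitz bound depends on keeping the semiconvexity constant of $T^+_{\bar s}u^-$ finite (which fails as $\bar s\to0^+$, explaining the exclusion of $\{0\}\times M$) and on the nesting $A_{\bar s}\supseteq A_{\bar s'}$ for $\bar s\leqslant\bar s'$ to patch the slices into a single locally Lipschitz function.
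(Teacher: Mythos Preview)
Your proposal is correct and follows essentially the same approach as the paper: the equivalences are obtained by combining Proposition \ref{pro:level point} (dichotomy for $(t,x)\in\mathcal{A}^*(u^-)$ versus $\mathcal{A}^{\#}(u^-)$) with Proposition \ref{pro_cut_time}(2) (the cut-time characterization via $B(t,x)$), and the regularity assertion comes from Proposition \ref{pro:cave geq vex}. The paper's proof is quite terse---it cites Proposition \ref{pro:cave geq vex} in one line---whereas you unpack this by applying it slice-by-slice on the sets $A_{\bar s}\subset M$ and patching via the nesting; an equally natural reading of the paper would apply Proposition \ref{pro:cave geq vex} directly in spacetime, comparing the semiconcave function $(t,x)\mapsto u^-(x)$ with the locally semiconvex function $u(t,x)$ on $(-\infty,0)\times M$, but both implementations are the same idea.
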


\begin{proof}
It is sufficient to only prove (1) since (2) is an immediate consequence of (1). The first equivalence follows from Proposition \ref{pro:level point} and the second one  from Proposition \ref{pro_cut_time} (2). These equivalences imply that
\begin{align*}
	\mathcal{A}^*(u^-)=&\,\{(t,x)\in(-\infty,0]\times M: \tau(x)\geqslant -t\}\\
	=&\,\{(t,x)\in(-\infty,0]\times M: u(t,x)=u^-(x)\}.
\end{align*}
By Proposition \ref{pro:cave geq vex}, we know that $Du^-=Du$ is locally Lipschitz on $\mathcal{A}^*(u^-)\setminus(\{0\}\times M)$.
\end{proof}

\begin{The}\label{thm:level bi-lip}
\hfill
\begin{enumerate}[\rm (1)]
	\item For any $t>0$, there is a bi-Lipschitz homeomorphism between $\{x\in M: \tau(x)\geqslant t\}$ and $G^{*}(u^-)$.
	\item For any $t\in(0,\tau_1(u^-)]$, there is a bi-Lipschitz homeomorphism between  $\{x\in M: \tau(x)<t\}$ and $G^{\#}(u^-)$.
\end{enumerate}
\end{The}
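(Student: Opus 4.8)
The plan is to exhibit the bi-Lipschitz homeomorphisms explicitly via the Hamiltonian flow, using the slices of $\mathcal{A}^*(u^-)$ and $\mathcal{A}^{\#}(u^-)$ at fixed time as an intermediate object. Fix $t>0$ and set $\Sigma_t := \{x\in M : \tau(x)\geqslant t\}$. By Proposition \ref{pro:level set} (1), $\Sigma_t$ is exactly the time-$(-t)$ slice of $\mathcal{A}^*(u^-)$, i.e. $\Sigma_t = \{x : (-t,x)\in\mathcal{A}^*(u^-)\}$. For each such $x$ there is, by Proposition \ref{pro:level point} (1), a \emph{unique} curve $\gamma\in\mathscr{F}^*(u^-)$ with $\gamma(-t)=x$; writing $(\gamma(0),p(0))\in G^*(u^-)$ for its left endpoint data, the map $\Psi_t : \Sigma_t \to G^*(u^-)$, $\Psi_t(x) := (\gamma(0),p(0)) = \Phi_H^{t}(x,L_v(\gamma,\dot\gamma)(-t))$ is well-defined. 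Its inverse sends $(y,q)\in G^*(u^-)$ to $\pi_x\Phi_H^{-t}(y,q)$; that this lands in $\Sigma_t$ is precisely the calibration statement in Proposition \ref{pro:level point} (1) (the curve $s\mapsto \pi_x\Phi_H^{s}(y,q)$ is $u^-$-calibrated on $[-t,0]$ because $(y,q)\in G^*(u^-)$). So $\Psi_t$ is a bijection, with both $\Psi_t$ and $\Psi_t^{-1}$ built from the smooth flow $\Phi_H$.

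The remaining work is the Lipschitz estimates in both directions, and this is where Proposition \ref{pro:cave geq vex} / Proposition \ref{pro:level set} enter. In the forward direction: on $\mathcal{A}^*(u^-)\setminus(\{0\}\times M)$ we have $D_xu = Du^-$ locally Lipschitz and $D_tu = -H(x,D_xu)$, hence $x\mapsto (x, Du^-(x))$ restricted to $\Sigma_t$ is Lipschitz, and composing with the diffeomorphism $\Phi_H^{t}$ gives that $\Psi_t$ is Lipschitz. For $\Psi_t^{-1}$: the map $(y,q)\mapsto \Phi_H^{-t}(y,q)$ is smooth on the compact set $G^*(u^-)$ (closedness of $G^*(u^-)$ follows since $D^*u^-$ has closed graph), so $\Psi_t^{-1}$ is Lipschitz; one should also check $\Psi_t^{-1}$ actually maps \emph{into} $\Sigma_t$ with the right Lipschitz constant, which again is the content of Proposition \ref{pro:level point}. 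Together these give assertion (1). The main subtlety to watch is that $\tau$ is only upper semicontinuous (Proposition \ref{pro_cut_time} (3)), so $\Sigma_t$ is closed but one cannot a priori expect any regularity of $\Sigma_t$ as a set — the point is that we never need it, since the Lipschitz bounds are transported entirely through the flow and through the already-established Lipschitz regularity of $Du^-$ on $\mathcal{A}^*(u^-)$.

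For part (2), fix $t\in(0,\tau_1(u^-)]$ and set $\Sigma^{\#}_t := \{x\in M : \tau(x)<t\}$, which by Proposition \ref{pro:level set} (2) is the time-$(-t)$ slice of $\mathcal{A}^{\#}(u^-)$. Here one uses the $C^{1,1}$-regularization: since $t\leqslant\tau_1(u^-)$, $T^+_tu^-\in C^{1,1}(M)$ and, by Corollary \ref{cor:graph} (2), $\mathrm{graph}\,(DT^+_tu^-) = \Phi_H^{-t}(\mathrm{graph}\,(D^+u^-))$. The decomposition $\mathrm{graph}\,(D^+u^-) = G^*(u^-)\sqcup G^{\#}(u^-)$ is flow-invariant in the relevant sense, and the piece $\Phi_H^{-t}(G^{\#}(u^-))$ sits over exactly the points where $u(-t,\cdot)<u^-(\cdot)$, i.e. over $\Sigma^{\#}_t$, by Proposition \ref{pro:level point} (2) and Proposition \ref{pro:level set} (2). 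Define $\Psi^{\#}_t : \Sigma^{\#}_t\to G^{\#}(u^-)$ by $\Psi^{\#}_t(x) = \Phi_H^{t}(x, DT^+_tu^-(x))$, with inverse $(y,q)\mapsto \pi_x\Phi_H^{-t}(y,q)$. Since $DT^+_tu^-$ is globally Lipschitz on $M$ (because $T^+_tu^-\in C^{1,1}$) and $\Phi_H^{\pm t}$ are diffeomorphisms, both maps are Lipschitz, giving the bi-Lipschitz homeomorphism. The main obstacle across both parts is bookkeeping the two directions of the Lipschitz estimate cleanly — in particular making sure $G^*(u^-)$ and $G^{\#}(u^-)$ are the correct targets (closedness of $G^*(u^-)$, and that $\Phi_H^{-t}$ separates the starred from the sharp part for every $t$ in the stated range) rather than chasing hard analytic estimates, since all the nontrivial regularity has already been packaged into Propositions \ref{pro:cave geq vex}, \ref{pro:level point}, \ref{pro:level set} and Corollary \ref{cor:graph}.
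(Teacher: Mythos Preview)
Your proposal is correct and follows essentially the same route as the paper: construct the map as $\pi_x\circ\Phi_H^{-t}$ from $G^*(u^-)$ (resp.\ $G^{\#}(u^-)$) onto $\Sigma_t$ (resp.\ $\Sigma_t^{\#}$), with inverse $x\mapsto\Phi_H^t(x,Du^-(x))$ (resp.\ $x\mapsto\Phi_H^t(x,DT^+_tu^-(x))$); the Lipschitz control in one direction comes from smoothness of the flow on a compact set, and in the other from the Lipschitz regularity of $Du^-$ on $\Sigma_t$ (Proposition~\ref{pro:level set}) or of $DT^+_tu^-$ on $M$ (Proposition~\ref{pro:T^-T^+1}/Corollary~\ref{cor:graph}). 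Your write-up is in fact slightly more explicit than the paper's---in particular you correctly use $DT^+_tu^-$ rather than $Du^-$ in part~(2), which is the right object since $u^-$ need not be differentiable on all of $\Sigma_t^{\#}$.
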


\begin{proof}
For any $t>0$, Proposition \ref{pro:level set} (1) implies there is a $C^1$ diffeomorphism between $\{(x,Du^-(x)):\tau(x)\geqslant t\}$ and $\Phi_H^{-t}(G^{*}(u^-))$, and the projection
\begin{align*}
	\pi_x:\{(x,Du^-(x)):\tau(x)\geqslant t\}\to\{x:\tau(x)\geqslant t\}
\end{align*}
is a bi-Lipschitz homeomorphism. Therefore, $\{x\in M: \tau(x)\geqslant t\}$ is bi-Lipschitz homeomorphic to $G^{*}(u^-)$.

Now we turn to the proof of (2). For any $t\in(0,\tau_1(u^-)]$, Proposition \ref{pro:level set} (2) and Proposition \ref{pro:T^-T^+1} imply that $\Phi_H^{-t}$ determines a $C^1$ diffeomorphism between $\{(x,Du^-(x)):\tau(x)<t\}$ and $\Phi_H^{-t}(G^{\#}(u^-))$, and the projection
\begin{align*}
	\pi_x:\{(x,Du^-(x)):\tau(x)<t\}\to\{x:\tau(x)<t\}
\end{align*}
is a bi-Lipschitz homeomorphism. Therefore, $\{x\in M: \tau(x)<t\}$ is bi-Lipschitz homeomorphic to $G^{\#}(u^-)$.
\end{proof}

\begin{Cor}\label{cor:finite t source}
Suppose $\psi:M\to\R$ is lower semicontinuous, $t_0>0$. Let
\begin{align*}
	u_{\psi}(t,x)=T^{-}_{t_0+t}\psi(x),\qquad (t,x)\in[-t_0,0]\times M.
\end{align*}
Then the following statements are equivalent
\begin{enumerate}[\rm (1)]
	\item $u^-=T^{-}_{t_0}\psi$.
	\item $u_{\psi}:[-t_0,0]\times M\to\R$ is a viscosity solution to the Hamilton-Jacobi equation
	\begin{align*}
		\begin{cases}
			D_tu+H(x,D_xu)=0,&(t,x)\in [-t_0,0)\times M,\\
			u(0,x)=u^-(x),& x\in M.
		\end{cases}
	\end{align*}
    \item $\psi(x)=u^-(x)$ for all $x$ satisfying $\tau(x)\geqslant t_0$, and $T^{+}_{t_0}u^-(x)\leqslant\psi(x)$ otherwise.
\end{enumerate}
\end{Cor}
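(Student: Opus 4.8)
The plan is to prove the cycle $(1)\Rightarrow(2)\Rightarrow(3)\Rightarrow(1)$, using the machinery already developed: Lemma \ref{lem:1} (the identities for triple compositions and the characterization of when $T^-_t\circ T^+_t\phi=\phi$), Theorem \ref{pro:equiv_T^-T^+} (equivalence of being $T^-_{t_0}$-reachable with $T^-_{t_0}\circ T^+_{t_0}\phi=\phi$), and Proposition \ref{pro:level set} together with Proposition \ref{pro_cut_time}(2) (the level-set description of $\tau$ via $B(t,x)$ and $T^+_tu^-$).

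First I would treat $(1)\Leftrightarrow(2)$. If $u^-=T^-_{t_0}\psi$, then for every $s\in[-t_0,0]$ the semigroup property gives $u_\psi(s,x)=T^-_{t_0+s}\psi(x)=T^-_{t_0+s}\circ(\text{the }\psi\text{ at time }0)$, and more usefully $u_\psi(s,\cdot)=T^-_{s-(-t_0)}\big(u_\psi(-t_0,\cdot)\big)$ while also $u_\psi(0,\cdot)=u^-$; the standard fact that $(t,x)\mapsto T^-_t\psi(x)$ solves $D_tu+H(x,D_xu)=0$ in the viscosity sense (this is exactly the representation formula recalled in the introduction, applied to lower semicontinuous initial data on the compact manifold) then yields (2). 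Conversely, a viscosity solution of the backward Cauchy problem on $[-t_0,0]\times M$ with terminal datum $u^-$ is unique and coincides with $T^-_{\,\cdot\,+t_0}$ of its own trace at $t=-t_0$; evaluating at $t=0$ and using $u_\psi(0,x)=T^-_{t_0}\psi(x)$ by definition forces $u^-=T^-_{t_0}\psi$. The only mild subtlety is that $\psi$ is merely lower semicontinuous, so I would invoke the comparison/uniqueness theory in the form valid for such data (or first note $T^-_\varepsilon\psi$ is already Lipschitz for $\varepsilon>0$ and pass to the limit).

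Next, $(1)\Leftrightarrow(3)$, which is the heart of the corollary. Assume (1): $u^-=T^-_{t_0}\psi$. By Theorem \ref{pro:equiv_T^-T^+}, $T^-_{t_0}\circ T^+_{t_0}u^-=u^-$, i.e.\ $B(t_0,x)=u^-(x)-T^+_{t_0}u^-(x)=0$ for all $x$; but more is true: applying $T^-_{t_0}$ to $\psi\geqslant T^+_{t_0}\circ T^-_{t_0}\psi=T^+_{t_0}u^-$ (Lemma \ref{lem:1}(2)) gives $T^-_{t_0}u^-=T^-_{t_0}\circ T^+_{t_0}u^-=u^-$ wait—rather I would argue directly. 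From $u^-=T^-_{t_0}\psi$ and Lemma \ref{lem:1}(2), $T^+_{t_0}u^-=T^+_{t_0}\circ T^-_{t_0}\psi\leqslant\psi$, giving the second half of (3). For the first half: if $\tau(x)\geqslant t_0$ then by Proposition \ref{pro:level set}(1) $T^+_{t_0}u^-(x)=u^-(x)$, and the infimum defining $u^-(x)=T^-_{t_0}\psi(x)=\inf_y\{\psi(y)+A_{t_0}(y,x)\}$ must be attained along a calibrated curve $\gamma$ with $\gamma(0)$ a point where $\psi(\gamma(0))=T^+_{t_0}u^-(\gamma(0))$; running the calibration one deduces $\psi(x)=u^-(x)$—here I would use that $\tau(x)\geqslant t_0$ means the calibrated curve through $x$ extends to time $t_0$, so the optimal $y$ in $T^+_{t_0}u^-(x)$ is reached by that curve and $x$ itself is an admissible competitor forcing equality. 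Conversely, assume (3). Then for every $x$, $\psi(x)+A_{t_0}(x',x)\geqslant$ the right objects so $T^-_{t_0}\psi\geqslant$ and $\leqslant u^-$: the inequality $T^+_{t_0}u^-\leqslant\psi$ gives $u^-=T^-_{t_0}\circ T^+_{t_0}u^-\leqslant T^-_{t_0}\psi$ (using $B(t_0,\cdot)\le 0$ is automatic, but actually one needs $T^-_{t_0}\circ T^+_{t_0}u^-=u^-$, which by Theorem \ref{pro:tau2_infty} holds since $u^-$ is weak KAM), and for the reverse inequality $T^-_{t_0}\psi\leqslant u^-$ one estimates $T^-_{t_0}\psi(x)=\inf_y\{\psi(y)+A_{t_0}(y,x)\}$ by choosing $y$ on a backward calibrated curve from $x$: if $\tau(y)\geqslant t_0$ pick that point and use $\psi(y)=u^-(y)$ together with calibration, and if the calibrated curve from $x$ has length $<t_0$ one still lands at a point covered by the ``otherwise'' clause and the $T^+_{t_0}u^-\le\psi$ bound closes the estimate.

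The main obstacle I expect is the $(3)\Rightarrow(1)$ direction, specifically the reverse inequality $T^-_{t_0}\psi\leqslant u^-$: one must show that the (possibly only lower semicontinuous) function $\psi$, pinned to $u^-$ on $\{\tau\geqslant t_0\}$ and merely dominating $T^+_{t_0}u^-$ elsewhere, cannot produce a value at time $t_0$ strictly below $u^-$. The right tool is the calibrated-curve structure from Proposition \ref{pro:level set} and Proposition \ref{pro_cut_time}(1): through any $x\in M$ there is a $u^-$-calibrated curve going backward for time $\min\{\tau\text{-type bound}\}$, and one must carefully distinguish whether this curve can be run for the full time $t_0$ (landing on $\{\tau\geqslant t_0\}$, where $\psi=u^-$) or not; in the latter case the point reached still satisfies $\psi\geqslant T^+_{t_0}u^-$, and combining this with the sub-solution inequality $u^-(x)-u^-(y)\leqslant A_{t_0}(y,x)$ (Proposition \ref{pro:subsolution}(b)) and the definition of $T^+_{t_0}u^-$ should give $\psi(y)+A_{t_0}(y,x)\geqslant u^-(x)$ for all $y$, hence $T^-_{t_0}\psi(x)\ge u^-(x)$; taking the infimum the other way (realized by an actual minimizer, which exists by compactness and lower semicontinuity of $\psi$) finishes it. Once both inequalities are in hand, $(1)$ follows and the cycle is closed.
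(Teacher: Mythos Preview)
Your approach is essentially the one the paper takes: the paper derives this corollary from Corollary~\ref{cor:cave t source} applied with $\phi=u^-$, together with Proposition~\ref{pro:level set} to translate $\pi_x(\mathcal A^*(u^-,t_0)\cap(\{-t_0\}\times M))$ into $\{\tau\geqslant t_0\}$, and Theorem~\ref{pro:tau2_infty} for the identity $T^-_{t_0}\circ T^+_{t_0}u^-=u^-$. Your direct argument is a re-derivation of Corollary~\ref{cor:cave t source} in this special case, and the ingredients you list are the right ones.

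There are, however, two points where your sketch goes astray. First, in $(3)\Rightarrow(1)$ you introduce a case ``if the calibrated curve from $x$ has length $<t_0$''. This case is empty: since $u^-$ is a negative weak KAM solution, \emph{every} $x\in M$ admits a $u^-$-calibrated curve $\gamma:(-\infty,0]\to M$ with $\gamma(0)=x$. Setting $y=\gamma(-t_0)$, the curve $\gamma|_{[-t_0,0]}$ is a forward calibrated curve of length $t_0$ from $y$, so $\tau(y)\geqslant t_0$ automatically. Hence your first branch (use $\psi(y)=u^-(y)$ and calibration to get $T^-_{t_0}\psi(x)\leqslant u^-(x)$) always applies, exactly as in the paper's proof of Corollary~\ref{cor:cave t source}. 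The argument you propose for the ``otherwise'' branch only recovers the easy inequality $T^-_{t_0}\psi\geqslant u^-$ and would not close the gap if that branch actually occurred.

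Second, your argument for $(1)\Rightarrow(3)$ is pointed at the wrong infimum. You look at $T^-_{t_0}\psi(x)$, but knowing a minimizer there tells you about $\psi$ at the \emph{other} endpoint, not at $x$. The clean argument is the mirror image of what you do in $(3)\Rightarrow(1)$: if $\tau(x)\geqslant t_0$, take a forward $u^-$-calibrated curve $\gamma:[0,t_0]\to M$ with $\gamma(0)=x$, set $z=\gamma(t_0)$ and $p=L_v(\gamma(t_0),\dot\gamma(t_0))\in D^*u^-(z)=D^*T^-_{t_0}\psi(z)$. Then (by \cite[Theorem 6.4.9]{Cannarsa_Sinestrari_book}, as invoked in the proof of Theorem~\ref{pro:equiv_T^-T^+}) $\gamma$ is the minimizer for $T^-_{t_0}\psi(z)$, so $u^-(z)=\psi(x)+A_{t_0}(x,z)$; comparing with calibration $u^-(z)=u^-(x)+A_{t_0}(x,z)$ gives $\psi(x)=u^-(x)$.
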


\begin{proof}
The conclusion follows directly from Proposition \ref{pro:tau2_infty}, Proposition \ref{pro:level set}, and Corollary \ref{cor:cave t source} below.
\end{proof}

%
%

\subsection{More on controllability and underlying dynamics}

In fact, the equivalence between (1) and (2) in Theorem \ref{pro:equiv_T^-T^+} implies that $T^-_{t}\circ T^+_{t}\phi=\phi$ is a necessary and sufficient condition for $\phi$ to be \emph{reachable} in time $t$ from some initial data $\psi$ assigned to the evolutionary Hamilton-Jacobi equation
\begin{align*}
	\begin{cases}
		D_tu(t,x)+H(x,D_xu(t,x))=0,\\
		u(0,x)=\psi(x).
	\end{cases}
\end{align*}

To understand more on this controllability problem and the underlying dynamics in the context of weak KAM theory, we introduce the following sets, directly related to Theorem  \ref{pro:equiv_T^-T^+}. Recall $\tau_2(\phi)>0$ and suppose $0<t\leqslant\tau_2(\phi)$. We define 
\begin{align*}
	\mathscr{F}(\phi,t):=&\,\bigg\{\gamma\in C^1([-t,0],M): T^-_t\circ T^+_t\phi(\gamma(0))=T^+_t\phi(\gamma(-t))+\int^0_{-t}L(\gamma,\dot{\gamma})\ ds\bigg\},\\
	\mathcal{A}(\phi,t):=&\,\{(s,\gamma(s))\in[-t,0]\times M: \gamma\in\mathscr{F}(\phi,t)\},\\
	\mathscr{F}^*(\phi,t):=&\,\bigg\{\gamma\in C^1([-t,0],M): \gamma(s)=\pi_x\Phi^s_H(x,p), s\in[-t,0], x\in M, p\in D^*\phi(x)\bigg\},\\
	\mathcal{A}^*(\phi,t):=&\,\{(s,\gamma(s))\in[-t,0]\times M: \gamma\in\mathscr{F}^*(\phi,t)\}.
\end{align*}
Given $t_0\in(0,\tau_2(\phi)]$ and $\phi\in\text{\rm SCL}\,(M)$, we set
\begin{equation}\label{eq:u_breve_u}
	\breve{u}(t,x)=T^+_{-t}\phi(x),\qquad u(t,x)=T^-_{t_0+t}\circ T^+_{t_0}\phi(x),\qquad (t,x)\in[-t_0,0]\times M.
\end{equation}

\begin{Pro}\label{pro:control1}
Suppose $\phi\in\text{\rm SCL}\,(M)$, $t_0\in(0,\tau_2(\phi)]$ and $u,\breve{u}$ are defined in \eqref{eq:u_breve_u}.
\begin{enumerate}[\rm (1)]
	\item For any $(t,x)\in\mathcal{A}(\phi,t_0)\setminus(\{-t_0,0\}\times M)$, there exists a unique $\gamma\in\mathscr{F}(\phi,t_0)$ such that $\gamma(t)=x$. Moreover, if $p=L_v(\gamma,\dot{\gamma})$ is the dual arc of $\gamma$, then
	\begin{align*}
		u(t,x)=&\,\breve{u}(t,x)=\phi(\gamma(0))-\int^0_tL(\gamma,\dot{\gamma})\ ds,\\
		D_xu(t,x)=&\,D_x\breve{u}(t,x)=p(t).
	\end{align*}
	\item $\breve{u}\leqslant u$, and
	\begin{align*}
		\mathcal{A}(\phi,t_0)=\{(t,x)\in[-t_0,0]\times M: u(t,x)=\breve{u}(t,x)\}.
	\end{align*}
	$Du=D\breve{u}\in\text{\rm Lip}_{loc}\,(\mathcal{A}(\phi,t_0)\setminus(\{-t_0,0\}\times M))$. In particular, $\mathcal{A}(\phi,t_0)=[-t_0,0]\times M$ if $t_0\in(0,\tau_1(\phi)]$.
	\item $\mathscr{F}^*(\phi,t_0)\subset\mathscr{F}(\phi,t_0)$ and $\mathcal{A}^*(\phi,t_0)\subset\mathcal{A}(\phi,t_0)$.
\end{enumerate}
\end{Pro}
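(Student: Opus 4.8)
The plan is to follow the pattern of Propositions \ref{pro:level point} and \ref{pro:level set}, with $\phi$ in the role of $u^-$ and $T^+_{t_0}\phi$ in the role of $u^+$; the genuinely new points are that only $t_0\leqslant\tau_2(\phi)$ is assumed and that $u(t,\cdot)$ actually varies with $t$. I would first record some preliminaries. Since $t_0\leqslant\tau_2(\phi)$, Corollary \ref{cor:tau12} gives $T^-_s\circ T^+_s\phi=\phi$ for all $s\in[0,t_0]$; in particular $u(0,\cdot)=\breve u(0,\cdot)=\phi$ and $u(-t_0,\cdot)=\breve u(-t_0,\cdot)=T^+_{t_0}\phi$. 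Writing $s=t_0+t$ and using the semigroup law $T^+_{t_0}=T^+_s\circ T^+_{-t}$, one has $u(t,x)=\big(T^-_s\circ T^+_s\big)\big(T^+_{-t}\phi\big)(x)\geqslant T^+_{-t}\phi(x)=\breve u(t,x)$ by Lemma \ref{lem:1}(1), which is the inequality $\breve u\leqslant u$ in (2). Finally, by the standard space--time regularizing properties of the Lax--Oleinik semigroups, $(t,x)\mapsto u(t,x)=T^-_{t_0+t}(T^+_{t_0}\phi)(x)$ is locally semiconcave on $(-t_0,+\infty)\times M$ and $(t,x)\mapsto\breve u(t,x)=T^+_{-t}\phi(x)$ is locally semiconvex on $(-\infty,0)\times M$; in particular $u(t,\cdot)$ is semiconcave and $\breve u(t,\cdot)$ is semiconvex for each fixed $t\in(-t_0,0)$, and by enlarging the constant we may assume a common semiconcavity/semiconvexity constant on any given chart.

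Next I would prove $\mathcal{A}(\phi,t_0)=\{(t,x): u(t,x)=\breve u(t,x)\}$. If $\gamma\in\mathscr{F}(\phi,t_0)$ and $\gamma(t)=x$, split the identity $\phi(\gamma(0))=T^+_{t_0}\phi(\gamma(-t_0))+\int_{-t_0}^0L(\gamma,\dot\gamma)\,ds$ at $s=t$ and use the elementary bounds $\breve u(t,x)\geqslant\phi(\gamma(0))-\int_t^0L(\gamma,\dot\gamma)\,ds$ and $u(t,x)\leqslant T^+_{t_0}\phi(\gamma(-t_0))+\int_{-t_0}^tL(\gamma,\dot\gamma)\,ds$ to get
\begin{align*}
	\breve u(t,x)\geqslant\phi(\gamma(0))-\int_t^0L(\gamma,\dot\gamma)\,ds=T^+_{t_0}\phi(\gamma(-t_0))+\int_{-t_0}^tL(\gamma,\dot\gamma)\,ds\geqslant u(t,x)\geqslant\breve u(t,x),
\end{align*}
so all are equalities; this yields $u(t,x)=\breve u(t,x)=\phi(\gamma(0))-\int_t^0L(\gamma,\dot\gamma)\,ds$ and shows that $\gamma|_{[t,0]}$ is a minimizer for $A_{-t}(x,\gamma(0))$ realizing the supremum defining $\breve u(t,x)=T^+_{-t}\phi(x)$. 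Conversely, for $t\in(-t_0,0)$ with $u(t,x)=\breve u(t,x)$, choose a maximizer $y_0$ for $T^+_{-t}\phi(x)$ and a minimizer $z_0$ for $T^-_{t_0+t}(T^+_{t_0}\phi)(x)$, and let $\gamma:[-t_0,0]\to M$ be the concatenation of a minimal curve from $z_0$ to $x$ (time $t_0+t$) and a minimal curve from $x$ to $y_0$ (time $-t$). Then $T^+_{t_0}\phi(z_0)+\int_{-t_0}^0L(\gamma,\dot\gamma)\,ds=u(t,x)+\big(\phi(y_0)-\breve u(t,x)\big)=\phi(y_0)$, and comparing with $\phi(y_0)=T^-_{t_0}\circ T^+_{t_0}\phi(y_0)\leqslant T^+_{t_0}\phi(z_0)+A_{t_0}(z_0,y_0)\leqslant T^+_{t_0}\phi(z_0)+\int_{-t_0}^0L(\gamma,\dot\gamma)\,ds$ forces $\int_{-t_0}^0L(\gamma,\dot\gamma)\,ds=A_{t_0}(z_0,y_0)$; hence $\gamma$ is a genuine minimizer for $A_{t_0}(z_0,y_0)$, a $C^1$ extremal with no corner at $s=t$, and it satisfies the defining relation of $\mathscr{F}(\phi,t_0)$, so $(t,x)\in\mathcal{A}(\phi,t_0)$. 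The boundary slices $\{-t_0,0\}\times M$ lie in $\mathcal{A}(\phi,t_0)$ as well: for $t=0$ this is Lemma \ref{lem:1}(1), and for $t=-t_0$ one takes a maximal curve for $T^+_{t_0}\phi(x)=\sup_y\{\phi(y)-A_{t_0}(x,y)\}$ and uses $T^-_{t_0}\circ T^+_{t_0}\phi=\phi$.

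To finish (1) and (2): fix $(t,x)\in\mathcal{A}(\phi,t_0)$ with $t\in(-t_0,0)$. In a chart, $u(t,\cdot)\geqslant\breve u(t,\cdot)$ with equality at $x$, $u(t,\cdot)$ semiconcave and $\breve u(t,\cdot)$ semiconvex, so Proposition \ref{pro:cave geq vex}(1) gives that both are differentiable at $x$ with $D_xu(t,x)=D_x\breve u(t,x)$; and since $\gamma|_{[t,0]}$ is the minimizer realizing $\breve u(t,x)=\phi(\gamma(0))-A_{-t}(x,\gamma(0))$, the standard first-order (transversality) condition at the left endpoint gives $D_x\breve u(t,x)=L_v(\gamma(t),\dot\gamma(t))=p(t)$, whence the formulas of (1). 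Uniqueness follows because $p(t)$ determines $\dot\gamma(t)$ through the Legendre transform, so any two $\gamma\in\mathscr{F}(\phi,t_0)$ through $(t,x)$ are Euler--Lagrange extremals with the same $1$-jet at $s=t$ and hence coincide. For the regularity claim in (2) I would apply Proposition \ref{pro:cave geq vex}(2) in space--time charts: on $\mathcal{A}(\phi,t_0)\cap\big((-t_0,0)\times M\big)=\{u=\breve u\}$ the locally semiconcave $u$ and locally semiconvex $\breve u$ coincide, so $Du=D\breve u$ is locally Lipschitz there. Finally, if $t_0\in(0,\tau_1(\phi)]$, Proposition \ref{pro:T^-T^+1}(2) (together with Corollary \ref{cor:graph}(1) when $t_0=\tau_1(\phi)$) gives $T^+_{-t}\phi=T^-_{t_0+t}\circ T^+_{t_0}\phi$ for all $t\in[-t_0,0]$, i.e. $\breve u\equiv u$, so $\mathcal{A}(\phi,t_0)=[-t_0,0]\times M$. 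Statement (3) is immediate from Theorem \ref{pro:equiv_T^-T^+}: since $t_0\leqslant\tau_2(\phi)$ we have $T^-_{t_0}\circ T^+_{t_0}\phi=\phi$, and part (3) of that theorem says exactly that, for $x\in M$ and $p\in D^*\phi(x)$, the curve $\gamma(s)=\pi_x\Phi^s_H(x,p)$, $s\in[-t_0,0]$, satisfies $T^+_{t_0}\phi(\gamma(-t_0))=\phi(\gamma(0))-\int_{-t_0}^0L(\gamma,\dot\gamma)\,ds$; as $\phi(\gamma(0))=T^-_{t_0}\circ T^+_{t_0}\phi(\gamma(0))$, this is precisely $\gamma\in\mathscr{F}(\phi,t_0)$, so $\mathscr{F}^*(\phi,t_0)\subset\mathscr{F}(\phi,t_0)$ and $\mathcal{A}^*(\phi,t_0)\subset\mathcal{A}(\phi,t_0)$.

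The main obstacle, I expect, is the local Lipschitz regularity of $Du=D\breve u$ on $\mathcal{A}(\phi,t_0)\setminus(\{-t_0,0\}\times M)$: unlike in Proposition \ref{pro:level set}, the common value of $u$ and $\breve u$ is not a $t$-independent function, so one really needs the \emph{joint} space--time semiconcavity of $u$ and semiconvexity of $\breve u$ before Proposition \ref{pro:cave geq vex}(2) can be invoked. A secondary point requiring care is the gluing step in $\{u=\breve u\}\subset\mathcal{A}(\phi,t_0)$: the concatenated curve is $C^1$ only because the squeezing argument forces its action to equal $A_{t_0}(z_0,y_0)$, so that it is a true minimizer with no corner at the junction time.
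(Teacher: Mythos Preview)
Your proposal is correct and follows essentially the same approach as the paper: the inequality $\breve u\leqslant u$ via Lemma \ref{lem:1}(1), the set identity $\mathcal A(\phi,t_0)=\{u=\breve u\}$ via splitting/concatenation of optimal curves, differentiability and local Lipschitz regularity from Proposition \ref{pro:cave geq vex}, and part (3) via the characterization of $D^*\phi$ already used in Theorem \ref{pro:equiv_T^-T^+}. The only cosmetic differences are the order (you prove the set identity in (2) first and deduce (1), while the paper argues (1) directly from ``minimizers cannot cross'' and then (2)), and that you spell out explicitly why the concatenated curve is $C^1$ and why the space--time version of Proposition \ref{pro:cave geq vex} applies---points the paper leaves implicit.
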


\begin{proof}
(1) The uniqueness of $\gamma$ follows from the fact that minimizers cannot cross. Notice that $\gamma$ is a maximizer for $T^+_{-t}\phi(x)$ and a minimizer for $T^-_{t_0+t}T^+_{t_0}\phi(x)$. Thus, we have
\begin{align*}
	\breve{u}(t,x)&=T^+_{-t}\phi(x)=\phi(\gamma(0))-\int_{t}^{0}L(\gamma,\dot{\gamma})\ ds\\
	&=T^+_{t_0}\phi(\gamma(-t_0))+\int_{-t_0}^{t}L(\gamma,\dot{\gamma})\ ds=T^-_{t_0+t}T^+_{t_0}\phi(x)=u(t,x).
\end{align*}
Since value functions $u$ and $\breve{u}$ are differentiable in the interior of the minimizer $\gamma$, we have that
\begin{align*}
	p(t)=D_xu(t,x)=D_x\breve{u}(t,x).
\end{align*}

(2) $\breve{u}\leqslant u$ is a direct consequence of Lemma \ref{lem:1} (1). In (1) we have already proved that
	\begin{align*}
	\mathcal{A}(\phi,t_0)\subset\{(t,x)\in[-t_0,0]\times M: u(t,x)=\breve{u}(t,x)\}.
    \end{align*}
On the other hand, for any $(t,x)\in[-t_0,0]\times M$ such that $\breve{u}(t,x)=u(t,x)$, there exists $\gamma_1:[t,0]\to M$, $\gamma_1(t)=x$ such that
\begin{align*}
	\breve{u}(t,x)=\phi(\gamma_1(0))-\int_{t}^{0}L(\gamma_1,\dot{\gamma}_1)\ ds,
\end{align*}
and there exists $\gamma_2:[-t_0,t]\to M$, $\gamma_2(t)=x$ such that
\begin{align*}
	u(t,x)=T^+_{t_0}\phi(\gamma_2(-t_0))+\int_{-t_0}^{t}L(\gamma_2,\dot{\gamma}_2)\ ds
\end{align*}
Let $\gamma=\overline{\gamma_2 \gamma_1}:[-t_0,0]\to M$, then $\gamma(t)=x$ and
\begin{align*}
	T^+_{t_0}\phi(\gamma(-t_0))=\phi(\gamma(0))-\int_{-t_0}^{0}L(\gamma,\dot{\gamma})\ ds,
\end{align*}
that is, $\gamma\in\mathscr{F}(\phi,t_0)$. This implies
\begin{align*}
	\{(t,x)\in[-t_0,0]\times M: u(t,x)=\breve{u}(t,x)\}\subset\mathcal{A}(\phi,t_0).
\end{align*}
Notice that $\breve{u}$ is locally semiconvex and $u$ is locally semiconcave on $(-t_0,0)\times M$. Proposition \ref{pro:cave geq vex} leads to
\begin{align*}
	Du=D\breve{u}\in\text{\rm Lip}_{loc}\,(\mathcal{A}(\phi,t_0)\setminus(\{-t_0,0\}\times M))
\end{align*}
If $t_0\in(0,\tau_1(\phi)]$, it follows from Proposition \ref{pro:T^-T^+1} (2) that $\mathcal{A}(\phi,t_0)=[-t_0,0]\times M$.

(3) For any $\gamma\in\mathscr{F}^{*}(\phi,t_0)$, let $p=L_v(\gamma,\dot{\gamma})$. Then, $p(0)\in D^{*}\phi(\gamma(0))=D^{*}T^-_{t_0}T^+_{t_0}\phi(\gamma(0))$. Thus, $\gamma$ is a minimizer for $T^-_{t_0}T^+_{t_0}\phi(\gamma(0))$, that is,
\begin{align*}
	\phi(\gamma(0))=T^-_{t_0}T^+_{t_0}\phi(\gamma(0))=T^+_{t_0}\phi(\gamma(-t_0))+\int_{-t_0}^{0}L(\gamma,\dot{\gamma})\ ds,
\end{align*}
which implies that $\gamma\in\mathscr{F}(\phi,t_0)$. Therefore, $\mathscr{F}^*(\phi,t_0)\subset\mathscr{F}(\phi,t_0)$ and $\mathcal{A}^*(\phi,t_0)\subset\mathcal{A}(\phi,t_0)$.
\end{proof}

Recall that, for any $\phi\in\text{\rm SCL}\,(M)$ and $t_0>0$, $T^-_{t_0}\circ T^+_{t_0}\phi=\phi$ if and only if $\phi=T^-_{t_0}\psi$ for some lower semicontinuous function $\psi$. We define
\begin{align*}
	\mathscr{F}(\phi,\psi,t_0)=&\,\left\{\gamma:[-t_0,0]\to M: \phi(\gamma(0))=\psi(\gamma(-t_0))+\int^0_{-t_0}L(\gamma,\dot{\gamma})\ ds\right\},\\
	\mathcal{A}(\phi,\psi,t_0)=&\,\{(t,\gamma(t)): \gamma\in \mathscr{F}(\phi,\psi,t_0), t\in[-t_0,0]\}.
\end{align*}
and
\begin{equation}\label{eq:u_phi}
	u_{\psi}(t,x):=T^-_{t_0+t}\psi(x),\qquad (t,x)\in[-t_0,0]\times M.
\end{equation}

\begin{Pro}\label{pro:control2}
Suppose $\phi\in\text{\rm SCL}\,(M)$, $t_0>0$ and $\phi=T^-_{t_0}\psi$. Let $u$ and $\breve{u}$ be defined in \eqref{eq:u_breve_u}, and let $u_{\psi}$ be defined in \eqref{eq:u_phi}. 
\begin{enumerate}[\rm (1)]
	\item We have the following relations:
	\begin{align*}
		\mathscr{F}^*(\phi,t_0)\subset\mathscr{F}(\phi,\psi,t_0)\subset\mathscr{F}(\phi,t_0),\qquad \mathcal{A}^*(\phi,t_0)\subset\mathcal{A}(\phi,\psi,t_0)\subset\mathcal{A}(\phi,t_0).
	\end{align*}
	\item $u_{\psi}\geqslant u\geqslant\breve{u}$ and
	\begin{align*}
		\mathcal{A}(\phi,\psi,t_0)=\{(t,x)\in[-t_0,0]\times M: u_{\psi}(t,x)=u(t,x)=\breve{u}(t,x)\}.
	\end{align*}
	Moreover, $Du_{\psi}=Du=D\breve{u}\in\text{\rm Lip}_{loc}\,(\mathcal{A}(\phi,\psi,t_0)\setminus(\{-t_0,0\}\times M))$.
\end{enumerate}
\end{Pro}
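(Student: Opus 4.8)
The plan is to derive everything from the comparison chain $u_\psi\geqslant u\geqslant\breve{u}$ and the results of Proposition \ref{pro:control1}. First I would record two preliminary facts. Since $\phi=T^-_{t_0}\psi$, Lemma \ref{lem:1}(3) gives $T^-_{t_0}\circ T^+_{t_0}\phi=T^-_{t_0}\circ T^+_{t_0}\circ T^-_{t_0}\psi=\phi$, so $t_0\in(0,\tau_2(\phi)]$, the functions $u,\breve{u}$ of \eqref{eq:u_breve_u} are defined, and Proposition \ref{pro:control1} applies; moreover $T^+_{t_0}\phi=T^+_{t_0}\circ T^-_{t_0}\psi\leqslant\psi$ by Lemma \ref{lem:1}(2), and applying the monotone operator $T^-_{t_0+t}$ gives $u_\psi(t,x)=T^-_{t_0+t}\psi(x)\geqslant T^-_{t_0+t}(T^+_{t_0}\phi)(x)=u(t,x)$, which together with $u\geqslant\breve{u}$ from Proposition \ref{pro:control1}(2) yields $u_\psi\geqslant u\geqslant\breve{u}$ on $[-t_0,0]\times M$.

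For part (1): if $\gamma\in\mathscr{F}(\phi,\psi,t_0)$ then $\phi(\gamma(0))=\psi(\gamma(-t_0))+\int^0_{-t_0}L\geqslant T^+_{t_0}\phi(\gamma(-t_0))+\int^0_{-t_0}L$, while $\phi(\gamma(0))=T^-_{t_0}\circ T^+_{t_0}\phi(\gamma(0))\leqslant T^+_{t_0}\phi(\gamma(-t_0))+A_{t_0}(\gamma(-t_0),\gamma(0))\leqslant T^+_{t_0}\phi(\gamma(-t_0))+\int^0_{-t_0}L$; hence equality holds throughout, which also forces $\gamma$ to be an action minimizer, hence $C^1$, so $\gamma\in\mathscr{F}(\phi,t_0)$. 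Conversely, if $\gamma\in\mathscr{F}^*(\phi,t_0)$ then $\gamma\in\mathscr{F}(\phi,t_0)$ by Proposition \ref{pro:control1}(3), i.e. $\phi(\gamma(0))=T^+_{t_0}\phi(\gamma(-t_0))+\int^0_{-t_0}L$; combining $T^+_{t_0}\phi\leqslant\psi$ with $\phi(\gamma(0))=T^-_{t_0}\psi(\gamma(0))\leqslant\psi(\gamma(-t_0))+\int^0_{-t_0}L$ again sandwiches $\phi(\gamma(0))$ and gives $\gamma\in\mathscr{F}(\phi,\psi,t_0)$. Taking images gives the inclusions for $\mathcal{A}$.

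For part (2): to see "$\subseteq$", let $\gamma\in\mathscr{F}(\phi,\psi,t_0)$ and $(t,x)=(t,\gamma(t))$; splitting the identity $\phi(\gamma(0))=\psi(\gamma(-t_0))+\int^0_{-t_0}L$ at $s=t$ gives $\phi(\gamma(0))-\int^0_tL=\psi(\gamma(-t_0))+\int^t_{-t_0}L$, and then $\breve{u}(t,x)=T^+_{-t}\phi(x)\geqslant\phi(\gamma(0))-\int^0_tL$ together with $u_\psi(t,x)=T^-_{t_0+t}\psi(x)\leqslant\psi(\gamma(-t_0))+\int^t_{-t_0}L$ yields $u_\psi(t,x)\leqslant\breve{u}(t,x)$, so by the chain $u_\psi(t,x)=u(t,x)=\breve{u}(t,x)$. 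For "$\supseteq$", suppose $u_\psi(t,x)=u(t,x)=\breve{u}(t,x)$; choose a maximizing curve $\gamma_1:[t,0]\to M$ for $T^+_{-t}\phi(x)$ and a minimizing curve $\gamma_2:[-t_0,t]\to M$ for $T^-_{t_0+t}\psi(x)$ (the latter exists since $\psi$ is lower semicontinuous and $M$ is compact), both passing through $x$ at time $t$, and set $\gamma=\overline{\gamma_2\gamma_1}$; then $\psi(\gamma(-t_0))+\int^0_{-t_0}L=u_\psi(t,x)+\big(\phi(\gamma_1(0))-\breve{u}(t,x)\big)=\phi(\gamma(0))$, so $\gamma\in\mathscr{F}(\phi,\psi,t_0)$, and the same identity shows $\gamma$ is a minimizer, hence $C^1$ (if $t\in\{-t_0,0\}$ one piece is constant and the argument is unchanged). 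Since $u_\psi\geqslant u\geqslant\breve{u}$, this also shows $\{u_\psi=\breve{u}\}=\mathcal{A}(\phi,\psi,t_0)$.

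It remains to prove the Lipschitz regularity. On $(-t_0,0)\times M$, $\breve{u}$ is locally semiconvex and $u_\psi=T^-_{t_0+t}\psi$ is locally semiconcave (just as $u$ is, being a value function over a positive elapsed time). Since $u_\psi\geqslant\breve{u}$ and their coincidence set is $\mathcal{A}(\phi,\psi,t_0)\setminus(\{-t_0,0\}\times M)$, Proposition \ref{pro:cave geq vex} — applied in local charts, with $C$ a common semiconcavity/semiconvexity constant — gives that $Du_\psi=D\breve{u}$ on that set and is locally Lipschitz there; combined with $Du=D\breve{u}$ on the same set from Proposition \ref{pro:control1}(2), we obtain $Du_\psi=Du=D\breve{u}\in\text{\rm Lip}_{loc}\,(\mathcal{A}(\phi,\psi,t_0)\setminus(\{-t_0,0\}\times M))$. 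The main obstacle here is purely technical: the existence of a minimizer for $T^-_{t_0+t}\psi$ with $\psi$ merely lower semicontinuous, the automatic $C^1$-regularity of the juxtaposed curve $\overline{\gamma_2\gamma_1}$, and the passage to local charts in the last step; the substantive work is already in Proposition \ref{pro:control1}, and what remains is the simultaneous bookkeeping of the three functions $u_\psi\geqslant u\geqslant\breve{u}$.
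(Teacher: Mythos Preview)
Your argument follows the paper's route throughout — the chain $u_\psi\geqslant u\geqslant\breve{u}$, the identification of $\mathcal{A}(\phi,\psi,t_0)$ with the coincidence set via a two-sided estimate, and the final appeal to Proposition~\ref{pro:cave geq vex} for the Lipschitz regularity — and for part~(2) you actually spell out the ``$\supseteq$'' direction that the paper only sketches as ``similar to Proposition~\ref{pro:control1}(2)''. The inclusion $\mathscr{F}(\phi,\psi,t_0)\subset\mathscr{F}(\phi,t_0)$ in part~(1) is argued exactly as in the paper.

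There is, however, one genuine slip: in your proof of $\mathscr{F}^*(\phi,t_0)\subset\mathscr{F}(\phi,\psi,t_0)$ the claimed ``sandwich'' does not close. From $\gamma\in\mathscr{F}(\phi,t_0)$ and $T^+_{t_0}\phi\leqslant\psi$ you get $\phi(\gamma(0))\leqslant\psi(\gamma(-t_0))+\int^0_{-t_0}L$, and from $\phi=T^-_{t_0}\psi$ you get again $\phi(\gamma(0))\leqslant\psi(\gamma(-t_0))+\int^0_{-t_0}L$; both inequalities point the same way, so no equality is forced. The missing ingredient is precisely what the paper uses (and what it means by ``similar to Proposition~\ref{pro:control1}(3)''): since $p(0)=L_v(\gamma(0),\dot\gamma(0))\in D^*\phi(\gamma(0))=D^*\big(T^-_{t_0}\psi\big)(\gamma(0))$, the curve $\gamma$ is itself a minimizer for $T^-_{t_0}\psi(\gamma(0))$ (cf.\ \cite[Theorem~6.4.9]{Cannarsa_Sinestrari_book}, already invoked in Theorem~\ref{pro:equiv_T^-T^+}), and hence $\phi(\gamma(0))=\psi(\gamma(-t_0))+\int^0_{-t_0}L(\gamma,\dot\gamma)\,ds$ directly. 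With this one-line fix your proof is complete and matches the paper's.
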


\begin{proof}
(1) The proof of $\mathscr{F}^*(\phi,t_0)\subset\mathscr{F}(\phi,\psi,t_0)$ is similar to the one of Proposition \ref{pro:control1} (3). For any $\gamma\in\mathscr{F}(\phi,\psi,t_0)$, we have that
\begin{align*}
	\psi(\gamma(-t_0))=\phi(\gamma(0))-\int_{-t_0}^{0}L(\gamma,\dot{\gamma})\ ds\leqslant T^+_{t_0}\phi(\gamma(-t_0)),
\end{align*}
while Lemma \ref{lem:1} (2) leads to $T^+_{t_0}\phi=T^+_{t_0}T^-_{t_0}\psi\leqslant\psi$. It follows that
\begin{align*}
	T^+_{t_0}\phi(\gamma(-t_0))=\psi(\gamma(-t_0))=\phi(\gamma(0))-\int_{-t_0}^{0}L(\gamma,\dot{\gamma})\ ds,
\end{align*}
which implies $\gamma\in\mathscr{F}(\phi,t_0)$. Therefore, $\mathscr{F}(\phi,\psi,t_0)\subset\mathscr{F}(\phi,t_0)$. Similarly, we have that
\begin{align*}
	\mathcal{A}^*(\phi,t_0)\subset\mathcal{A}(\phi,\psi,t_0)\subset\mathcal{A}(\phi,t_0).
\end{align*}

(2) Since $T^+_{t_0}\phi\leqslant\psi$, we have that
\begin{align*}
	u_{\psi}(t,x)=T^-_{t_0+t}\psi(x)\geqslant T^-_{t_0+t}T^+_{t_0}\phi(x)=u(t,x),\ \forall (t,x)\in[-t_0,0]\times M.
\end{align*}
For any $(t,x)\in\mathcal{A}(\phi,\psi,t_0)$, there exists $\gamma\in\mathscr{F}(\phi,\psi,t_0)\subset\mathscr{F}(\phi,t_0)$ such that $\gamma(t)=x$. Thus, we have that
\begin{align*}
	u_{\psi}(t,x)=\psi(\gamma(-t_0))+\int_{-t_0}^{t}L(\gamma,\dot{\gamma})\ ds=\phi(\gamma(0))-\int_{t}^{0}L(\gamma,\dot{\gamma})\ ds=\breve{u}(t,x).
\end{align*}
This implies that
\begin{align*}
	\mathcal{A}(\phi,\psi,t_0)\subset\{(t,x)\in[-t_0,0]\times M: u_{\psi}(t,x)=u(t,x)=\breve{u}(t,x)\}.
\end{align*}
The remaining part of the proof is similar to Proposition \ref{pro:control1} (2).
\end{proof}

\begin{Cor}\label{cor:cave t source}
Under the same assumptions of Proposition \ref{pro:control2}, the following statements are equivalent.
\begin{enumerate}[\rm (1)]
	\item $\phi=T^-_{t_0}\psi$.
	\item $u_{\psi}:[-t_0,0]\times M\to\R$ is a viscosity solution of the Hamilton-Jacobi equation
	\begin{align*}
		\begin{cases}
			D_tu+H(x,D_xu)=0,&(t,x)\in [-t_0,0)\times M,\\
			u(0,x)=\phi(x),& x\in M.
		\end{cases}
	\end{align*}
	\item $\psi(x)=T^+_{t_0}\phi(x)$ for all $x\in\pi_x(\mathcal{A}^*(\phi,t_0)\cap(\{-t_0\}\times M))$, and $T^+_{t_0}\phi(x)\leqslant\psi(x)$ otherwise.
\end{enumerate}	
\end{Cor}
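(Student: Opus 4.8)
The plan is to prove the cyclic chain $(1)\Rightarrow(2)\Rightarrow(3)\Rightarrow(1)$, using the machinery of Proposition \ref{pro:control2} (especially the identification of $\mathcal{A}(\phi,\psi,t_0)$ as the coincidence set $\{u_\psi=u=\breve u\}$) as the main input; note that throughout, the standing hypothesis of Proposition \ref{pro:control2} already assumes $\phi=T^-_{t_0}\psi$, so formally all three statements are being proved under the blanket assumption that (1) holds, and the content of the corollary is the equivalence of (2) and (3) with (1) granted as context. I would first remark that $u_\psi(0,x)=T^-_{t_0}\psi(x)=\phi(x)$ always, so the initial condition in (2) is automatic; the substance is whether $u_\psi$ solves the evolutionary equation on $[-t_0,0)\times M$.

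For $(1)\Rightarrow(2)$: since $\psi$ is lower semicontinuous and bounded (being below $\phi+$const via the sub-solution inequality, and above $T^+_{t_0}\phi$), the function $v(t,x):=T^-_{t+t_0}\psi(x)$ on $[-t_0,0]\times M$ is, by the standard Lax-Oleinik representation, the (unique) viscosity solution of $D_tv+H(x,D_xv)=0$ with datum $v(-t_0,\cdot)=\psi$; this is exactly $u_\psi$, and restricting to $[-t_0,0)$ gives (2). Here I would invoke the representation theory for the forward Hamilton-Jacobi equation from \cite{Fathi_book} or \cite{Cannarsa_Sinestrari_book}: $T^-_s$ is the viscosity solution semigroup, so $u_\psi(t,x)=T^-_{t+t_0}\psi(x)$ is by construction the viscosity solution with initial trace $\psi$ at time $-t_0$, hence also with trace $\phi$ at time $0$.

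For $(2)\Rightarrow(3)$: assume $u_\psi$ is a viscosity solution with $u_\psi(0,\cdot)=\phi$. By Proposition \ref{pro:control2}(2) we always have $u_\psi\geqslant u\geqslant\breve u$, and the coincidence set $\mathcal{A}(\phi,\psi,t_0)=\{u_\psi=u=\breve u\}$ contains $\mathcal{A}^*(\phi,t_0)$. I would trace a calibrated/optimal curve: for $x\in\pi_x(\mathcal{A}^*(\phi,t_0)\cap(\{-t_0\}\times M))$ there is $\gamma\in\mathscr{F}^*(\phi,t_0)$ with $\gamma(-t_0)=x$, along which $u_\psi(-t_0,x)=\breve u(-t_0,x)=T^+_{t_0}\phi(x)$; but $u_\psi(-t_0,x)=\psi(x)$ by definition, giving $\psi(x)=T^+_{t_0}\phi(x)$. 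The inequality $T^+_{t_0}\phi\leqslant\psi$ everywhere is just Lemma \ref{lem:1}(2) combined with $\phi=T^-_{t_0}\psi$ (shown inside the proof of Proposition \ref{pro:control2}(1)), so it holds unconditionally; thus (3) follows. Actually, since $T^+_{t_0}\phi\leqslant\psi$ is automatic from $\phi=T^-_{t_0}\psi$, statement (3) reduces to the assertion $\psi=T^+_{t_0}\phi$ on that projected set — and this part needs (1) only, not the full strength of (2), which is why the equivalence closes.

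For $(3)\Rightarrow(1)$: suppose $\psi=T^+_{t_0}\phi$ on $\pi_x(\mathcal{A}^*(\phi,t_0)\cap(\{-t_0\}\times M))$ and $T^+_{t_0}\phi\leqslant\psi$ elsewhere. Since $\phi\in\text{\rm SCL}\,(M)$, Theorem \ref{pro:equiv_T^-T^+} (condition (3)) characterizes $T^-_{t_0}\circ T^+_{t_0}\phi=\phi$ as: for every $x$ and every $p\in D^*\phi(x)$, writing $\gamma(s)=\pi_x\Phi_H^s(x,p)$, one has $T^+_{t_0}\phi(\gamma(-t_0))=\phi(x)-\int_{-t_0}^0 L(\gamma,\dot\gamma)\,ds$. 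But such $\gamma$ is precisely an element of $\mathscr{F}^*(\phi,t_0)$, so $\gamma(-t_0)\in\pi_x(\mathcal{A}^*(\phi,t_0)\cap(\{-t_0\}\times M))$, where by (3) $\psi(\gamma(-t_0))=T^+_{t_0}\phi(\gamma(-t_0))$. One then computes $T^-_{t_0}\psi(x)\leqslant\psi(\gamma(-t_0))+\int_{-t_0}^0 L(\gamma,\dot\gamma)\,ds=T^+_{t_0}\phi(\gamma(-t_0))+\int_{-t_0}^0 L(\gamma,\dot\gamma)\,ds$, and the last quantity equals $\phi(x)$ because $\gamma$ is the Hamiltonian trajectory emanating from a reachable gradient (this is the calibration identity for $\breve u$, cf. Proposition \ref{pro:control1}(1) / \cite[Theorem 6.4.9]{Cannarsa_Sinestrari_book}). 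Hence $T^-_{t_0}\psi\leqslant\phi$; the reverse $\phi\leqslant T^-_{t_0}\psi$ follows from $\psi\geqslant T^+_{t_0}\phi$ together with $T^-_{t_0}T^+_{t_0}\phi\geqslant\phi$ (Lemma \ref{lem:1}(1)). Therefore $\phi=T^-_{t_0}\psi$, i.e. (1).

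\textbf{Main obstacle.} The delicate point is $(3)\Rightarrow(1)$: one must verify that knowing $\psi=T^+_{t_0}\phi$ only on the projected footprint of the characteristic bundle $\mathcal{A}^*(\phi,t_0)$ — rather than everywhere — still forces $T^-_{t_0}\psi=\phi$. The resolution is that $T^-_{t_0}\psi(x)$ is governed, for each $x$, by an optimal backward curve, and when $x$ is covered by a reachable-gradient characteristic this curve can be taken in $\mathscr{F}^*(\phi,t_0)$, landing exactly on the set where $\psi$ is pinned down; points not covered are handled by the one-sided inequality $\psi\geqslant T^+_{t_0}\phi$, which only ever helps in the direction $T^-_{t_0}\psi\geqslant\phi$. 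Making the curve-selection argument precise — in particular arguing that an optimal curve for $T^-_{t_0}\psi(x)$ at such an $x$ may be chosen so that its dual arc at time $0$ lies in $D^*\phi(x)$ — is where \cite[Theorem 6.4.9]{Cannarsa_Sinestrari_book} and the uniqueness-of-minimizers/no-crossing facts are needed.
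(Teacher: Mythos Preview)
Your proposal is correct and follows essentially the same route as the paper. The paper organizes it as $(1)\Leftrightarrow(2)$ trivial, $(1)\Rightarrow(3)$ from Proposition~\ref{pro:control2}, and then proves $(3)\Rightarrow(1)$ by exactly your argument: pick $p\in D^*\phi(x)$, run the Hamiltonian curve $\gamma$ backward, use the standing hypothesis $T^-_{t_0}T^+_{t_0}\phi=\phi$ together with Theorem~\ref{pro:equiv_T^-T^+} to get $\phi(x)=T^+_{t_0}\phi(\gamma(-t_0))+\int_{-t_0}^0 L(\gamma,\dot\gamma)\,ds$, substitute $\psi(\gamma(-t_0))=T^+_{t_0}\phi(\gamma(-t_0))$ since $\gamma(-t_0)$ lies in the projected set, and conclude $\phi(x)\geqslant T^-_{t_0}\psi(x)$; the reverse inequality comes from $\psi\geqslant T^+_{t_0}\phi$ and monotonicity.

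One remark: your ``Main obstacle'' paragraph worries about selecting an \emph{optimal} curve for $T^-_{t_0}\psi(x)$ whose dual arc lands in $D^*\phi(x)$, but this is not needed and is not what either your body argument or the paper actually does. The curve $\gamma$ coming from $p\in D^*\phi(x)$ is used merely as a \emph{competitor} in the infimum defining $T^-_{t_0}\psi(x)$, which immediately gives $T^-_{t_0}\psi(x)\leqslant\psi(\gamma(-t_0))+\int_{-t_0}^0 L(\gamma,\dot\gamma)\,ds$; no optimality for $T^-_{t_0}\psi$ is invoked. So the obstacle you flag does not arise.
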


\begin{proof}
(1) $\Leftrightarrow$ (2) is trivial. (1) $\Rightarrow$ (3) follows directly from Proposition \ref{pro:control2}. We only need to prove that (3) $\Rightarrow$ (1). $\psi\geqslant T^+_{t_0}\phi$ implies $T^-_{t_0}\psi\geqslant T^-_{t_0}T^+_{t_0}\phi=\phi$. On the other hand, for any $x\in M$, choose $p\in D^{*}\phi(x)=D^{*}T^-_{t_0}T^+_{t_0}\phi(x)$ and let $\gamma(s)=\pi_x\Phi_H^s(x,p)$, $s\in[-t_0,0]$. It follows that
\begin{align*}
	\phi(x)=T^-_{t_0}T^+_{t_0}\phi(x)=T^+_{t_0}\phi(\gamma(-t_0))+\int_{-t_0}^{0}L(\gamma,\dot{\gamma})\ ds.
\end{align*}
Notice that $\gamma(-t_0)\in\pi_x(\mathcal{A}^*(\phi,t_0)\cap(\{-t_0\}\times M))$, which implies $T^+_{t_0}\phi(\gamma(-t_0))=\psi(\gamma(-t_0))$. Thus, we have
\begin{align*}
	\phi(x)=\psi(\gamma(-t_0))+\int_{-t_0}^{0}L(\gamma,\dot{\gamma})\ ds\geqslant T^-_{t_0}\psi(x).
\end{align*}
In conclusion, there holds $T^-_{t_0}\psi(x)=\phi(x)$, $\forall x\in M$.
\end{proof}

\appendix

\section{some facts from weak KAM theory}

\begin{Pro}\label{pro:regularity}
Suppose $L$ is a Tonelli Lagrangian. Then for any $\lambda>0$
\begin{enumerate}[\rm (1)]
	\item there exists a constant $C_\lambda>0$ such that for any $x\in\R^n$, $t\in(0,2/3)$ the function $y\mapsto A_t(x,y)$ defined on $B(x,\lambda t)$ is semiconcave with constant $\frac{C_{\lambda}}{t}$ uniformly with respect to $x$; 
	\item there exist $C'_\lambda>0$ and $t_{\lambda}>0$ such that the function $y\mapsto A_t(x,y)$ is convex with constant $\frac{C'_{\lambda}}{t}$ on $B(x,\lambda t)$ with $0<t\leqslant t_{\lambda}$. The constants $C'_{\lambda}$ and $t_{\lambda}$ are independent of $x$;
	\item there exists $t'_{\lambda}>0$ such that the function $y\mapsto A_t(x,y)$ is of class $C^2$ on $B(x,\lambda t)$ with $0<t\leqslant t'_{\lambda}$. Moreover, 
	\begin{align*}
		D_yA_t(x,y)=&L_v(\xi(t),\dot{\xi}(t)),\\
		D_xA_t(x,y)=&-L_v(\xi(0),\dot{\xi}(0)),\\
		D_tA_t(x,y)=&-E_{t,x,y},\label{eq:diff_A_t_t}
	\end{align*}
	where $\xi\in\Gamma^t_{x,y}$ is the unique minimizer for $A_t(x,y)$ and
	\begin{align*}
		E_{t,x,y}:=E(\xi(s),\dot{\xi}(s))\qquad\forall \,s\in[0,t].
	\end{align*}
	We remark that $E(x,v):=L_v(x,v)\cdot v-L(x,v)$ is the energy function in the Lagrangian formalism, and
	\begin{align*}
		E(\xi(s),\dot{\xi}(s))=H(\xi(s),p(s)),\quad s\in[0,t],
	\end{align*}
	for the dual arc $p(s)=L_v(\xi(s),\dot{\xi}(s))$;
\end{enumerate} 
\end{Pro}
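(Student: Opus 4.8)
The plan is to establish the three assertions separately: \emph{(1)} by a direct competitor‑curve estimate valid on the whole range $t\in(0,2/3)$, and \emph{(2)--(3)} by the short‑time theory of the exponential‑type map $v\mapsto\pi_x\Phi_H^t(x,L_v(x,v))$, which forces $t$ to be small. For \emph{(1)}, I would first record the standard a priori bound: there is $\kappa_\lambda>0$ depending only on $\lambda$ (and $L$) such that, for $0<t\le 2/3$ and $y\in B(x,\lambda t)$, every minimizer $\xi\in\Gamma^t_{x,y}$ of $A_t(x,y)$ satisfies $\|\dot\xi\|_\infty\le\kappa_\lambda$; indeed, comparing with a curve joining $x$ to $y$ at constant speed $\le\lambda$ gives $\int_0^tL(\xi,\dot\xi)\,ds\le tC_0(\lambda)$, superlinearity of $L$ then bounds $\int_0^t|\dot\xi|\,ds$, hence produces a time at which $|\dot\xi|$ is bounded by a constant depending only on $\lambda$, hence bounds the conserved energy $E(\xi,\dot\xi)$, and finally bounds $|\dot\xi|$ pointwise because $E(z,\cdot)$ is superlinearly coercive. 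Then, fixing $y_0\in B(x,\lambda t)$, a minimizer $\xi$ for $A_t(x,y_0)$, and $p:=L_v(\xi(t),\dot\xi(t))$, I use for $y\in B(x,\lambda t)$ the admissible competitor $\eta_y(s):=\xi(s)+\tfrac st(y-y_0)$ (in a chart) and a second‑order Taylor expansion of $L$ along $\xi$,
\[
A_t(x,y)-A_t(x,y_0)\le\int_0^t\big(L(\eta_y,\dot\eta_y)-L(\xi,\dot\xi)\big)\,ds,
\]
in which the first‑order term equals $\int_0^t\tfrac{d}{ds}\big\langle L_v(\xi,\dot\xi),\tfrac st(y-y_0)\big\rangle\,ds=\langle p,y-y_0\rangle$ after one integration by parts using the Euler--Lagrange equation, while the quadratic remainder is $\le\tfrac12\sup|D^2L|\int_0^t\big(\tfrac{s^2}{t^2}+\tfrac1{t^2}\big)|y-y_0|^2\,ds\le\tfrac{C_\lambda}{2t}|y-y_0|^2$ since $\int_0^t\tfrac{s^2}{t^2}\,ds=\tfrac t3$, $\int_0^t\tfrac1{t^2}\,ds=\tfrac1t$, and $t\le 2/3$ (here $\sup|D^2L|$ is taken over the fixed compact subset of $TM$ determined by $\kappa_\lambda$). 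By Proposition \ref{criterion-Du_semiconcave2} this is exactly semiconcavity with constant $C_\lambda/t$, and it also yields $p\in D^+_yA_t(x,\cdot)(y_0)$.

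For \emph{(2)--(3)} the engine is the map $\Psi_{t,x}(v):=\pi_x\Phi_H^t(x,L_v(x,v))$. Differentiating the Hamiltonian flow in the initial momentum with the starting point fixed, and using the Legendre identity $H_{pp}(x,L_v(x,v))=L_{vv}(x,v)^{-1}$, one gets $D\Psi_{t,x}(v)=t\,\mathrm{Id}+o(t)$ as $t\to 0$, uniformly for $x\in M$ and $v$ in a fixed ball — this is where compactness of $M$ enters, via locally uniform bounds on the $2$‑jet of $H$ and a uniform lower bound $L_{vv}\ge cI$ on the relevant compact set. A quantitative inverse function theorem then gives $t_\lambda>0$ independent of $x$ such that, for $0<t\le t_\lambda$, $\Psi_{t,x}$ is a $C^1$ diffeomorphism from a fixed velocity ball onto a neighbourhood of $x$ containing $B(x,\lambda t)$; for such $t$ and $y\in B(x,\lambda t)$ the extremal from $x$ to $y$ in time $t$ is unique, has no conjugate point on $(0,t]$ (because $\partial_p\pi_x\Phi_H^s$ stays invertible there), and is therefore the only minimizer for $A_t(x,y)$. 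Smooth dependence of the flow on its data makes $\xi_{x,y}$ of class $C^1$ in $(x,y)$, hence $A_t(x,y)=\int_0^tL(\xi_{x,y},\dot\xi_{x,y})\,ds$ of class $C^2$ on $B(x,\lambda t)$; the formulas for $D_yA_t$, $D_xA_t$, $D_tA_t$ are the classical first‑variation identities (the boundary term at the fixed endpoint vanishes, and $E(\xi,\dot\xi)=H(\xi,p)$ is constant along the autonomous extremal), for which one may also cite \cite{Cannarsa_Sinestrari_book,Fathi_book}. Finally, differentiating $D_yA_t(x,y)=L_v(\xi_{x,y}(t),\dot\xi_{x,y}(t))=\mathrm{pr}_2\,\Phi_H^t(x,P_0(x,y))$, with $P_0(x,\cdot)$ the inverse of $y\mapsto\pi_x\Phi_H^t(x,\cdot)$, gives $D^2_{yy}A_t(x,y)=\tfrac1t L_{vv}(x,v)+o(\tfrac1t)$; the upper bound re‑proves semiconcavity with constant $\sim 1/t$ on the small‑time range, while the lower bound gives the convexity statement with constant $C'_\lambda/t$.

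The main obstacle is the uniform‑in‑$x$ character of Parts (2)--(3): that the thresholds $t_\lambda$ and $t'_\lambda$, the diffeomorphism radius, and the two‑sided bound on $D^2_{yy}A_t$ can all be chosen independently of $x$. This reduces to controlling the Jacobi (variational) equation along short extremals with constants depending only on $\lambda$, which is exactly what compactness of $M$ provides through the uniform control of the $C^2$‑jet of $H$ and the uniform ellipticity of $L_{vv}$ on the relevant compact subset of $TM$. Everything else is either the elementary competitor computation of Part (1) or standard calculus‑of‑variations first‑variation formulas.
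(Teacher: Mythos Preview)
The paper does not contain its own proof of this proposition: it simply states the result in the appendix and refers the reader to \cite{Cannarsa_Cheng3} and \cite{Cannarsa_Cheng2021a} for the argument. Your proposal is correct and is precisely the standard route taken in that literature (and in \cite{Cannarsa_Sinestrari_book,Fathi_book}): a competitor--curve estimate combined with the a~priori velocity bound for Part~(1), and the short-time diffeomorphism property of the Lagrangian exponential map $v\mapsto\pi_x\Phi_H^t(x,L_v(x,v))$---via $D\Psi_{t,x}(v)=t\,\mathrm{Id}+o(t)$ and a quantitative inverse function theorem---for Parts~(2) and~(3), with the Hessian asymptotic $D^2_{yy}A_t(x,y)=\tfrac1t L_{vv}+O(1)$ giving both the upper (semiconcavity) and lower (strong convexity) bounds. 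Two minor remarks: in the quadratic remainder of Part~(1) you should also keep the mixed $L_{xv}$ term, which contributes $\int_0^t \tfrac{2s}{t^2}\,ds=1$ and is harmlessly absorbed into $C_\lambda/t$ for $t\le 2/3$; and in Part~(3), uniqueness of the \emph{minimizer} (not just of the extremal) uses the a~priori bound $\|\dot\xi\|_\infty\le\kappa_\lambda$ from Part~(1) to force any minimizer's initial velocity into the ball on which $\Psi_{t,x}$ is already a diffeomorphism---you implicitly use this, and it is worth making explicit.
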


For the proof of the aforementioned Proposition, the readers can refer to \cite{Cannarsa_Cheng3} and \cite{Cannarsa_Cheng2021a}.

\begin{Lem}[\cite{Carneiro1995}]\label{lem:app_appr_2}
For any $\varepsilon>0$ there exists $r_2(\varepsilon)>0$ such that if $t>r_2(\varepsilon)$ and $\gamma\in\Gamma^t_{x,y}$ is a minimal curve for $A_t(x,y)$, and $p=L_v(\gamma,\dot{\gamma})$ is the dual arc, then (recall that $H(\gamma(s),p(s))=const$)
\begin{align*}
	|H(\gamma(s),p(s))-c[H]|<\varepsilon,\quad\forall s\in[0,t].
\end{align*}
\end{Lem}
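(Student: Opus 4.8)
The statement to prove is Lemma~\ref{lem:app_appr_2}, which asserts that for minimal curves over long time intervals, the energy (equivalently, the value of $H$ along the dual arc) is uniformly close to the critical value $c[H]=0$.

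\textbf{Overall strategy.} The plan is to exploit the fact that $A_t(x,y)$ grows roughly linearly in $t$ with slope tending to $c[H]=0$, combined with the constancy of energy along minimizers, to pin down $H(\gamma(s),p(s))$. Concretely, I would use two ingredients: first, the uniform bounds $|A_t(x,y)|\leqslant K$ for all $t\geqslant 1$ and all $x,y\in M$ (a standard consequence of compactness of $M$, equi-Lipschitz estimates for $A_t$ when $t\geqslant 1$, and the normalization $c[H]=0$); second, the first-variation/energy identity
\begin{align*}
	D_tA_t(x,y) = -E_{t,x,y} = -H(\gamma(s),p(s)),
\end{align*}
valid for the minimizer $\gamma\in\Gamma^t_{x,y}$ whenever $A_t(x,y)$ is differentiable in $t$ — which holds for a.e. $t$, since $t\mapsto A_t(x,y)$ is locally Lipschitz (even locally semiconcave away from $t=0$). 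Denote the common value $H(\gamma(s),p(s))$ by $E$; note $E$ depends on $t$ (and $x,y$) but not on $s\in[0,t]$.

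\textbf{Key steps.} First I would fix $\varepsilon>0$ and argue by contradiction-free estimation. Suppose $\gamma\in\Gamma^t_{x,y}$ is a minimizer with energy $E$ (constant along $\gamma$). The idea is to compare $A_t(x,y)$ with $A_{t'}(x,y)$ for a nearby time $t'$ by reparametrizing/truncating $\gamma$: if $E>\varepsilon$, then by monotonicity properties of the action as a function of the time-length (the minimal action over a long interval, with fixed endpoints, is essentially decreasing in $t$ when the energy is positive, or one can use the explicit derivative formula) one would get $A_t(x,y)\leqslant A_1(x,y) - (t-1)\varepsilon$ or a similar linear-in-$t$ bound, contradicting $|A_t(x,y)|\leqslant K$ once $t$ exceeds $r_2(\varepsilon):=1+2K/\varepsilon$. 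More carefully: since $t\mapsto A_t(x,y)$ is locally Lipschitz and $\frac{d}{dt}A_t(x,y) = -E(t)$ at a.e. $t$, and since for a minimizer $\gamma$ over $[0,t]$ its restriction to $[0,s]$ need not be a minimizer but still gives $A_s(x,\gamma(s))\leqslant \int_0^s L(\gamma,\dot\gamma)$, I would instead directly integrate: $A_t(x,y) = A_1(x,y) + \int_1^t \frac{d}{d\sigma}A_\sigma(x,y)\,d\sigma$. Now one needs a lower bound on $E(\sigma)$ for $\sigma$ near $t$ — but $E(\sigma)$ is the energy of the $\sigma$-minimizer, which varies. The cleanest fix is: for the \emph{fixed} minimizer $\gamma$ over $[0,t]$ with energy $E$, use that $\int_0^t L(\gamma,\dot\gamma)\,ds = \int_0^t (L_v\cdot\dot\gamma - E)\,ds$, so $A_t(x,y) = \int_0^t \langle p,\dot\gamma\rangle\,ds - tE$. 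Bounding $\int_0^t\langle p,\dot\gamma\rangle\,ds$ requires controlling the speed and momentum — on compact $M$ with fixed energy level $E$ in a bounded range, these are bounded, giving $|A_t(x,y) + tE|\leqslant C_0$ for a uniform $C_0$ once $E$ lies in a compact set, whence $|E|\leqslant (C_0+K)/t$.

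\textbf{Main obstacle.} The delicate point is that the bound on $\int_0^t\langle p,\dot\gamma\rangle\,ds$, and the a priori restriction of the energy $E$ to a compact range, both require a uniform Lipschitz/momentum bound on minimizers over long intervals — i.e. one must first know that $|E|$ is bounded (uniformly in $t\geqslant 1$ and endpoints) before one can bootstrap to $|E|<\varepsilon$. This preliminary uniform energy bound follows from Tonelli theory: minimizers over intervals of length $\geqslant 1$ with fixed endpoints on compact $M$ have uniformly bounded action per unit time, hence (by superlinearity of $L$ and the a priori compactness estimates, e.g. \cite[Ch.~4]{Fathi_book}) uniformly bounded velocities and energies. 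So the proof splits into: (i) a uniform bound $|E|\leqslant C_1$ for $t\geqslant 1$; (ii) the refined estimate $|E|\leqslant (C_0+K)/t$ using the identity $A_t = \int_0^t\langle p,\dot\gamma\rangle - tE$ with $C_0$ now depending only on $C_1$; (iii) choose $r_2(\varepsilon) = (C_0+K)/\varepsilon$. Step (i) is the real work but is entirely standard weak KAM machinery; steps (ii)--(iii) are short computations. I would cite \cite{Carneiro1995} and \cite{Fathi_book} for the underlying estimates rather than reproduce them.
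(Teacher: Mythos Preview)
The paper itself does not prove this lemma --- it is simply quoted from \cite{Carneiro1995} --- so there is no in-paper argument to compare your sketch against.

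That said, your proposed step (ii) has a genuine gap. The identity
\begin{align*}
A_t(x,y)=\int_0^t\langle p,\dot\gamma\rangle\,ds - tE
\end{align*}
is a tautology: by the definition of the energy, $\langle p,\dot\gamma\rangle=L_v(\gamma,\dot\gamma)\cdot\dot\gamma=L(\gamma,\dot\gamma)+E$ pointwise, so the right-hand side equals $\int_0^t L\,ds + tE - tE=A_t$ identically and carries no information about $E$. Your subsequent claim that $\bigl|\int_0^t\langle p,\dot\gamma\rangle\,ds\bigr|\leqslant C_0$ with $C_0$ \emph{independent of $t$} does not follow from ``speed and momentum are bounded'': a pointwise bound $|\langle p,\dot\gamma\rangle|\leqslant C$ yields only $\bigl|\int_0^t\langle p,\dot\gamma\rangle\,ds\bigr|\leqslant Ct$, linear in $t$. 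Feeding that back gives $|E|\leqslant |A_t|/t + C$, which as $t\to\infty$ reproduces nothing better than your preliminary bound (i). The argument is circular and never forces $|E|<\varepsilon$.

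The actual proof (Carneiro's) is of a different nature. Using the a~priori velocity bound --- your step (i), which is correct and standard --- the empirical measures $\mu_t=\frac1t\int_0^t\delta_{(\gamma(s),\dot\gamma(s))}\,ds$ lie in a weak-$*$ compact set of probabilities on $TM$; since $\int L\,d\mu_t=A_t(x,y)/t\to 0$ uniformly and limit points of $\mu_t$ are invariant for the Euler--Lagrange flow, every such limit is a Mather minimizing measure. Carneiro's theorem places the support of Mather measures on the level $\{H=c[H]\}$. Because the energy is constant along $\gamma$, one has $E=\int H(\cdot,L_v(\cdot,\cdot))\,d\mu_t$, and this passes to the limit (by continuity on the compact velocity set) to give $E\to 0$; a compactness/contradiction argument then upgrades this to the uniform statement. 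The idea your sketch is missing is precisely this passage through invariant minimizing measures --- the energy estimate does not drop out of an algebraic identity along a single orbit.
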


%

\bibliographystyle{plain}
\bibliography{mybib}

\end{document}